\newcolumntype{D}{@{}>{\iffalse}l<{\fi}}
\newcommand{\sm}{\setminus}
\newcommand{\R}{\mathbb{R}}
\newcommand{\Rn}{\R^n}
\newcommand{\Rq}{\R^q}
\newcommand{\Rnplus}{\R^n_+}
\newcommand{\Z}{\mathbb{Z}}
\newcommand{\Zqplus}{\Z^q_+}
\newcommand{\qplus}{q^+}
\newcommand{\qmin}{q^-}
\newcommand{\twoq}{2^q}
\newcommand{\hal}{\hat{\al}}
\newcommand{\hm}{\widehat{m}}
\newcommand{\tm}{\widetilde{m}}
\newcommand{\bx}{\bar{x}}
\newcommand{\bv}{\bar{v}}
\newcommand{\bw}{\bar{w}}
\newcommand{\br}{\bar{r}}
\newcommand{\bm}{\bar{m}}
\newcommand{\rj}{r^j}
\newcommand{\sj}{s_j}
\newcommand{\sjs}{s_{\jstar}}
\newcommand{\rh}{r_h}
\newcommand{\fh}{f_h}
\newcommand{\xh}{x_h}
\newcommand{\istar}{i_*}
\newcommand{\jstar}{j_*}
\newcommand{\Qis}{Q_{\istar}}
\newcommand{\rijstar}{r^{\istar}_{\jstar}}
\newcommand{\bmijstar}{\bar{m}^{\istar}_{\jstar}}
\newcommand{\sse}{\subseteq}
\newcommand{\jinJ}{j \in J}
\newcommand{\vmi}{\bigvee^m_{i=1}}
\newcommand{\conv}{\mbox{\rm conv}}
\newcommand{\PD}{P_D}
\newcommand{\pde}{\PD^{=}}
\newcommand{\PI}{P_I}
\newcommand{\PL}{P_L}
\newcommand{\Pni}{P^{(n)}_i}
\newcommand{\Pnis}{P^{(n)}_{i_*}}
\newcommand{\Pnd}{P^{(n)}_D}
\newcommand{\convpd}{\conv\,\PD}
\newcommand{\convpi}{\conv\,\PI}
\newcommand{\convpnd}{\conv\,\Pnd}
\newcommand{\convpde}{\convpd^=}
\newcommand{\PnI}{P^{(n)}_I}
\newcommand{\convpnI}{\conv\,\PnI}
\newcommand{\pocta}{$P_{\text{octa}}$}
\newcommand{\cglp}{(CGLP)}
\newcommand{\cglpq}{\cglp$_Q$}
\newcommand{\al}{\alpha}
\newcommand{\js}{{j_*}}
\newcommand{\bal}{\bar{\al}}
\newcommand{\bu}{\bar{u}}
\newcommand{\half}{\frac{1}{2}}
\newcommand{\Qpi}{Q^+_i}
\newcommand{\Qmi}{Q^-_i}
\newcommand{\tqmi}{\widetilde{Q}^-_i}
\newcommand{\tqpi}{\widetilde{Q}^+_i}
\newcommand{\xk}{x_k}
\newcommand{\tkv}{\widetilde{K}^*(v)}
\newcommand{\tkvn}{\widetilde{K}^{*(n)}(v)}
\newcommand{\tal}{\widetilde{\al}}
\newcommand{\kvn}{K^{*(n)}(v)}
\newcommand{\rij}{r^i_j}
\newcommand{\mij}{m^i_j}
\newcommand{\hmoj}{\hm^1_j}
\newcommand{\hmtj}{\hm^2_j}
\newcommand{\roj}{r^{\,1}_j}
\newcommand{\broj}{\bar{r}^{\,1}_j}
\newcommand{\rtj}{r^2_j}
\newcommand{\brtj}{\bar{r}^{\,2}_j}
\newcommand{\moj}{m^1_j}
\newcommand{\mtj}{m^2_j}
\newcommand{\ceilroj}{\lceil \roj \rceil}
\newcommand{\ceilrtj}{\lceil \rtj \rceil}
\newcommand{\floorri}{\lfloor r^i \rfloor}
\newcommand{\ceilri}{\lceil r^i \rceil}
\newcommand{\floorbro}{\lfloor \bar{r}^1 \rfloor}
\newcommand{\ceilbro}{\lceil \bar{r}^1 \rceil}
\newcommand{\ceilbrt}{\lceil \bar{r}^2 \rceil}
\newcommand{\floorbrt}{\lfloor \bar{r}^2 \rfloor}
\newcommand{\Ak}{A^k}
\newcommand{\ako}{a^k_0}
\newcommand{\bko}{b^k_0}
\newcommand{\thk}{\theta^k}
\newcommand{\wk}{w_k}
\newcommand{\vk}{v_k}
\newcommand{\akj}{a^k_j}
\newcommand{\mkj}{m^k_j}
\newcommand{\alkj}{\al^k_j}
\newcommand{\balkj}{\bal^k_j}
\newcommand{\alj}{\al_j}
\newcommand{\ao}{a_0}
\newcommand{\bo}{b_0}
\newcommand{\bvk}{\bv_k}
\newcommand{\bvkp}{\bv^+_k}
\newcommand{\bwk}{\bw_k}
\newcommand{\bwkp}{\bw^+_k}
\newcommand{\fo}{f_1}
\newcommand{\ft}{f_2}
\newcommand{\ta}{\tilde{a}}
\newcommand{\tb}{\tilde{b}}
\newcommand{\almaxj}{\al^{\rm max}_j}
\newcommand{\alminj}{\al^{\rm min}_j}
\newcommand{\mjmax}{m^{\rm max}_j}
\newcommand{\mjmin}{m^{\rm min}_j}
\newcommand{\sevene}{(\ref{nonbasicdisjunction_integralnonbasics}$^=$)}
\newtheorem{theorem}{Theorem}[section]
\newtheorem{corollary}[theorem]{Corollary}
\newtheorem{lemma}[theorem]{Lemma}
\newtheorem{proposition}[theorem]{Proposition}
\newenvironment{example}[1][Example]{\begin{trivlist}
\item[\hskip \labelsep {\bfseries #1}]}{\end{trivlist}}
\numberwithin{equation}{section}
\numberwithin{table}{section}
\begin{document}
\baselineskip=18pt      
\title{Intersection cuts from multiple rows: a disjunctive programming approach}
\date{April 2012}
\author{Egon Balas\thanks{Carnegie Mellon University, Tepper School of Business, Pittsburgh, PA 15213. Research supported by NSF Grant \#DMI-352885 and ONR contract \#N00014-03-1-0133} \and Andrea Qualizza\footnotemark[1]~\thanks{Currently at Amazon.com Research, Seattle, WA.}}

\maketitle

\abstract{
We address the issue of generating cutting planes for mixed integer programs from multiple rows of the simplex tableau with the tools of disjunctive programming. A cut from $q$ rows of the simplex tableau is an intersection cuts from a $q$-dimensional parametric cross-polytope, which can also be viewed as a disjunctive cut from a $2^q$-term disjunction. We define the disjunctive hull of the $q$-row problem, describe its relation to the integer hull, and show how to generate its facets. For the case of binary basic variables, we derive cuts from the stronger disjunctions whose terms are equations. We give cut strengthening procedures using the integrality of the nonbasic variables for both the integer and the binary case. Finally, we discuss some computational experiments.}

\section{Introduction: intersection cuts and disjunctive programming}\label{sec:intro}

In the last few years a considerable effort has been devoted to generating valid cuts for mixed integer programs from multiple rows of the simplex tableau, with a focus on cuts from two rows. This research was pioneered by the 2007 paper of Andersen, Louveaux, Weismantel and Wolsey \cite{ALWW}, followed by Borozan and Cornu\'ejols \cite{BorozanCornuejols}, Cornu\'ejols and Margot \cite{CM}, Dey and Wolsey \cite{DeyWolsey} and many others (\cite{BaBoCoMa1, Margot_code, DashDeyGunluk, DeyLodi}; for a recent survey see \cite{ConfortiCornuejolsZambelli}).

All of these papers view and derive the multiple-row cuts as intersection cuts, a concept introduced in \cite{BalasIntersection}, i.e. cuts obtained by intersecting the extreme rays of the cone defined by a basic linear programming solution with the boundary of a convex set whose interior contains no feasible integer point. Intersection cuts are equivalent to disjunctive cuts, and in this paper we apply the tools of disjunctive programming to the study of cuts from multiple rows of the simplex tableau. Two early versions of this paper were presented at the 2009 Spring Meeting of the AMS in San Francisco \cite{BalasDH1} and at the 20$^{th}$ ISMP in Chicago \cite{BalasDH2}.

The structure of our paper is as follows. In the remainder of this section we outline the connection of intersection cuts with disjunctive programming. In section~\ref{sec:integer_and_disjunctive_hulls} we introduce the concept of disjunctive hull associated with $q$ rows of the simplex tableau and examine the relation between the disjunctive hull and the integer hull. We then give a geometric interpretation of cuts from $q$ rows of the simplex tableau as cuts from a $q$-dimensional parametric cross-polytope (section~\ref{chapter2_subsection_cuts_from_parametric_crosspolytope}), followed by a theorem relating the facets of the disjunctive hull to those of the integer hull (section~\ref{chapter2_subsection_facets_dh_ih}). In section~\ref{chapter2_section_case_2_dimensions} we specialize these results to the case of $q=2$. The next section (\ref{chapter2_subsection_strengthening}) discusses the strengthening of our cuts when some of the nonbasic variables are integer-constrained. Section~\ref{chapter2_section_01_disjunctive_hull} deals with the 0-1 case, when the stronger disjunction whose terms are equations can be used to derive stronger cuts. Finally,
section~\ref{chapter2_section_computational_results} describes some computational experiments.

\vspace{12pt} \begin{center}{\bf *} \end{center} \vspace{12pt}

Suppose a Mixed Integer Program is given in the form of $q$ rows of the simplex tableau
\begin{equation}\label{eq2.1}
x=\bx + \sum_{\jinJ} \rj \sj, \quad x \in \Zqplus, \ s \in \Rnplus
\end{equation}
where $\bx$ is a basic feasible solution to LP, the linear
programming relaxation of a MIP, and we are interested in generating
an inequality that cuts off $\bx$ but no feasible integer point.

\begin{theorem}\label{th1.1} (Balas \cite{BalasIntersection}). Let $T \sse \R^{q}$ be a
closed convex set whose interior contains $\bx$ but no feasible integer point.
For $\jinJ$, let $\sj^* := \max \{ \sj : \bx + \rj \sj \in T\}$.
Then the inequality $\alpha s \ge 1$, where $\alpha_j =
\frac{1}{\sj^*}$, $\jinJ$, cuts off $\bx$ but no feasible
integer point.
\end{theorem}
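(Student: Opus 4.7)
The plan is to separate the statement into two claims: first, that the inequality is violated at $\bar{x}$, and second, that every feasible integer point satisfies it. The first claim is immediate: the basic solution $\bar{x}$ corresponds to $s = 0$ in \eqref{eq2.1}, so the left-hand side is $\alpha \cdot 0 = 0 < 1$. All the substance is in the second claim.

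For the second claim, I would argue by contradiction. Suppose $\hat{x} \in \Z^q_+$ is feasible for the MIP, with nonbasic representation $\hat{x} = \bx + \sum_{j \in J} \rj \hat{s}_j$, $\hat{s} \ge 0$, and suppose $\alpha \hat{s} < 1$. Assume first that $s_j^* < \infty$ for every $j \in J$. Set $\mu_j := \hat{s}_j / s_j^*$ for $j \in J$ and $\mu_0 := 1 - \sum_j \mu_j = 1 - \alpha \hat{s} > 0$. A direct computation shows
\[
\mu_0 \bx + \sum_{j \in J} \mu_j \bigl(\bx + s_j^* \rj\bigr) \;=\; \bx + \sum_{j \in J} \hat{s}_j \rj \;=\; \hat{x},
\]
so $\hat{x}$ is a convex combination of $\bx$ and the points $\bx + s_j^* \rj$, each of which lies in $T$ (the former by assumption, the latter by the definition of $s_j^*$ and closedness of $T$). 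Because $\mu_0 > 0$ and $\bx \in \intt$, the convex combination lies in $\intt$: any open ball around $\bx$ contained in $T$ scales down by $\mu_0$ to an open ball around $\hat{x}$ contained in $T$. This contradicts the hypothesis that $\intt$ contains no feasible integer point.

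The one subtlety is the case $s_j^* = \infty$ for some $j$, with the convention $\alpha_j = 1/s_j^* = 0$. Here the ray $\{\bx + \lambda \rj : \lambda \ge 0\}$ lies entirely in $T$, so for any finite $M > 0$ the point $\bx + M \rj$ is still in $T$. I would repeat the convex combination above but replace each infinite $s_j^*$ by $M$, setting $\mu_j = \hat{s}_j / M$; choosing $M$ large enough makes $\sum_{j \in J} \mu_j < 1$ (because the finite-$s_j^*$ contributions already sum to less than $1$, and the infinite ones shrink as $M \to \infty$), keeping the coefficient $\mu_0$ of $\bx$ strictly positive and the same identity for $\hat{x}$ valid. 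The interior argument then goes through as before.

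The only step that requires care is that last one — reconciling the infinite components of $s^*$ with the convex combination identity — because naively one cannot write $\mu_j \cdot s_j^* \rj$ with $s_j^* = \infty$. Rescaling by an arbitrary finite $M$ avoids this and relies only on $\rj$ being a recession direction of $T$, which follows from $s_j^* = \infty$ and closedness/convexity of $T$.
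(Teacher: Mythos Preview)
The paper does not actually prove Theorem~\ref{th1.1}; it simply states the result with a citation to Balas' original 1971 paper \cite{BalasIntersection}. So there is no proof in the paper to compare your argument against.

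That said, your argument is correct and is essentially the classical proof. The convex-combination identity expressing $\hat{x}$ as $\mu_0\bar{x} + \sum_j \mu_j(\bar{x} + s_j^* r^j)$ with $\mu_0 = 1 - \alpha\hat{s} > 0$ is exactly the right mechanism, and the observation that a convex combination with positive weight on an interior point lands in the interior is standard. Your handling of the unbounded case ($s_j^* = \infty$, $\alpha_j = 0$) by replacing $s_j^*$ with a sufficiently large finite $M$ is also correct; you only need $\bar{x} + M r^j \in T$, which follows directly from the ray lying in $T$, so you do not even need the full recession-direction statement you mention at the end.
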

The inequality $\al s \ge 1$ is known as an {\em intersection cut}.
\begin{figure}[ht!]
\setlength{\unitlength}{.9cm}
\begin{picture}(5,5)(0,0)
~~~~~~~~~~~~~~~~\scalebox{1}{\includegraphics{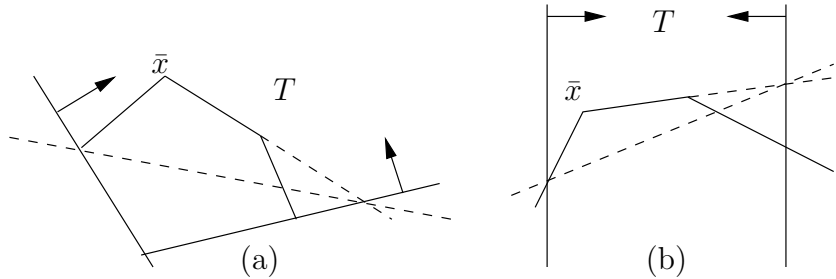}}
\put(-10.3,2.9){$\bx$}
\put(-8.5,2.5){$T$}
\put(-9,0){(a)}
\put(-2.9,3.5){$T$}
\put(-4.2,2.5){$\bx$}
\put(-3,0){(b)}
\end{picture}
\caption{Two intersection cuts}
\label{figure1}
\end{figure}
Theorem~\ref{th1.1} is illustrated by Figure~\ref{figure1}. In both cases (a) and (b) the convex set $T$ consists of the
intersection of two halfspaces, but in (b) the two
halfspaces are defined by hyperplanes parallel to one of the
coordinate axes, and so their intersection defines an infinite
strip. The intersection cut from this latter set $T$ is the Gomory
Mixed Integer cut (GMI) \cite{Gomory1960}.

This particular class of intersection cuts, the GMI cuts, has played
a crucial role in making mixed integer programs practically
solvable. These cuts are derived from a convex set of the form
$\lfloor \bx_i \rfloor \le x_i \le \lceil \bx_i \rceil$, where $x_i
= \bx_i + \sum_{\jinJ} r_j^i \sj$ is one of the rows of an optimal
simplex tableau and $\lfloor \bx_i \rfloor < \bx_i < \lceil \bx_i \rceil$. More generally, cuts obtained from a convex set of
the form $\pi_0 \le \pi x \le \pi_0+1$, where $(\pi,\pi_0)$ is an
integer vector with $\text{gcd}(\pi) = 1$, are known in the literature as split cuts \cite{CKS}. It is
then natural to ask the question whether intersection cuts derived
simultaneously from several rows of a simplex tableau have some
properties that distinguish them from split cuts. It was this
question that has led to the investigation of intersection cuts from
maximal lattice-free convex sets by \cite{ALWW,BorozanCornuejols} and others.

We propose a different approach to the same problem, which promises
some computational advantages. The approach is that of Disjunctive
Programming, a natural outgrowth of the study of intersection cuts.
To see the connection, consider an intersection cut from a
polyhedral set with the required properties, of the form $T := \{ x
: d^i x \le d^i_0, \ i=1,\ldots,m\}$. Clearly, the requirement that
int\,$T$ should contain no feasible integer point, can be rephrased
as the requirement that every feasible integer point should satisfy
at least one of the weak complements of the inequalities defining
$T$, i.e. should satisfy the disjunction
\begin{equation}\label{eq2.2}
\bigvee^m_{i=1} (d^i x \ge d^i_0).
\end{equation}
Therefore an intersection cut from $T$ can be viewed as a disjunctive cut
from (\ref{eq2.2}). While these two cuts are essentially the same, the
disjunctive point of view opens up new perspectives. Thus, suppose
that in addition to (\ref{eq2.2}), all feasible solutions have to
satisfy the inequalities $Ax \ge b$. Then one way to proceed is to
generate all valid cutting planes from (\ref{eq2.2}) and append
these to $Ax \ge b$. The resulting system will be
$$P := \left\{ x \in \Rn : \left(Ax \ge b\right) \cap \mbox{ conv\,} \left(
\bigvee^m_{i=1} \left(d^i x \ge d^i_0\right)\right)\right\}.
$$
But another way to proceed is to introduce $Ax \ge b$ into each term
of the disjunction (\ref{eq2.2}), i.e. replace (\ref{eq2.2}) with
\begin{equation}\label{eq2.3}
\vmi \left( \begin{array}{c} Ax \ge b \\ d^i x \ge
d^i_0\end{array}\right),
\end{equation}
and take the convex hull of this union of polyhedra:
$$Q := \mbox{ conv\,} \left( \vmi \left( \begin{array}{c} Ax \ge b \\ d^i x \ge
d^i_0\end{array}\right) \right)
$$
Now it is not hard to see that $Q \sse P$, and in fact $Q$ is in
most cases a much tighter constraint set than $P$. We illustrate the
difference on a 2-term disjunction. Given an arbitrary Mixed Integer
Program, let $(\pi,\pi_0)$ be an integer vector with a component
$\pi_j$ for every integer-constrained variable. Then the disjunctive
cut derived from 
\begin{equation}\label{eq2.4}
\pi x \le \pi_0 \ \vee \ \pi x \ge \pi_0 + 1 
\end{equation}
is equivalent to the intersection cut derived from the convex set
$$\pi_0 \le \pi x \le \pi_0 +1,
$$
illustrated in Figure~\ref{figure1}. On the other hand, the
disjunction
\begin{equation}\label{eq2.5}
\left(\begin{array}{rcl} Ax & \ge & b \\ \pi x & \le & \pi_0
\end{array}\right) \quad \vee \quad \left(\begin{array}{rcl} Ax & \ge & b \\ \pi x & \ge &
\pi_0 +1
\end{array}\right)
\end{equation}
gives rise to an entire family of cuts, whose members are determined
by the multipliers $u$, $v$ associated with $Ax \ge b$ in the two
terms of this more general disjunction
\begin{equation}\label{eq2.6}
(\pi - uA) x \le \pi_0 - ub \ \vee \ (\pi + vA) x \ge \pi_0 + vb
+1
\end{equation}
Cuts derived from a disjunction of the form (\ref{eq2.4}) are
called split cuts, a term that reflects the fact that (\ref{eq2.4})
splits the space into two disjoint half-spaces. Cook, Kannan and
Schrijver \cite{CKS} who coined this term also extended it to the much
larger family of cuts derived from disjunctions of the form
(\ref{eq2.6}).

Disjunctive sets of the form (\ref{eq2.3}) or (\ref{eq2.5})
represent unions of polyhedra, and the study of optimization over
unions of polyhedra is known as Disjunctive Programming. Its two
basic results are a compact representation of the convex hull of a
union of polyhedra in a higher dimensional space, and the sequential
convexifiability of facial disjunctive sets \cite{BalasDisjunctiveProgramming,Balas}. The
application of disjunctive programming to mixed 0-1 programs has
become known as the lift-and-project method \cite{BalasCeriaCornuejols}.
Here we apply this approach to the study of intersection cuts from
multiple rows of the simplex tableau.

\section{Integer and disjunctive hulls}\label{sec:integer_and_disjunctive_hulls}

Consider again a system defined by $q$ rows of the simplex tableau,
this time without the integrality constraints:
\begin{equation}\label{eq3.1}
x = f + \sum_{\jinJ} \rj \sj, \ \sj \ge 0, \ j \in J,
\end{equation}
where $f$, $\rj \in \Rq$, $\jinJ := \{ 1,\ldots,n\}$, and
assume $0 < f_i < 1$, $i \in Q := \{ 1,\ldots,q\}$.
This assumption can be made without loss of generality since setting
$x_i^\prime = x_i - \lfloor f_i\rfloor$ and $f_i^\prime = f_i - \lfloor
f_i\rfloor,\ i\in\ Q$, we have that $x_i^\prime,f_i^\prime,\ i\in Q$ satisfy the assumption.
The set
\begin{equation}\label{eq3.2}
P_L := \{ (x,s) \in \Rq \times \Rn : (x,s) \mbox{ satisfies
(\ref{eq3.1})}\}
\end{equation}
is the polyhedral cone with apex at $(x,s)=(f,0)$ defined by the
constraints that are tight for this particular basic solution.
Imposing the integrality constraints on the basic components we get
the mixed integer set
\begin{equation}\label{eq3.3}
P_I := \{ (x,s) \in P_L : x_i \mbox{ integer}, i \in Q\},
\end{equation}
whose convex
hull, conv\,$P_I$, is Gomory's corner polyhedron \cite{Gomory}, or the
{\em integer hull} of the MIP over the cone $P_L$. The main objective of
the papers mentioned in the introduction was to study the structure
of $P_I$ for small $q$, with a view of characterizing the facets of
conv\,$P_I$ and minimal valid inequalities for $P_I$.

Consider now the following disjunctive relaxation of $P_I$,
obtained by replacing the integrality constraints on $x_i$ with the simple
disjunctions $x_i \le 0 \ \vee \ x_i \ge 1$, $i \in Q$:
\begin{equation}\label{eq3.4}
\PD := \{ (x,s) \in \PL : x_i \le 0 \ \vee \ x_i \ge 1, \ i \in
Q\}.
\end{equation}

Like $\PI$, $\PD$ is a nonconvex set. Its convex hull, $\conv\,\PD$,
which we call the simple {\em disjunctive hull}, is a weaker relaxation of $\PI$ than $\conv\,\PI$, i.e. $\conv\,\PD
\supseteq \conv \PI$, but it is easier to handle, since it is the
convex hull of the union of $2^q$ polyhedra. Thus one can apply
disjunctive programming and lift-and-project techniques to generate
facets of $\conv\,\PD$ at a computational cost that for small $q$
seems acceptable. In this context, the crucial question is of
course, how much weaker is the relaxation $\conv\,\PD$ than
$\conv\,\PI$? We will pose this question in a more specific form
that will enable us to give it a practically useful answer: when is
it that a facet defining inequality for $\convpd$ is also facet
defining for $\convpi$? In other words, which facets of the (simple)
disjunctive hull are also facets of the integer hull?
Before addressing this question, however, we will take a side-step,
by introducing a third kind of hull. If we strengthen the
disjunctive relaxation of $\PI$ by replacing the inequalities in the
disjunctions $x_i \le 0 \ \vee \ x_i \ge 1$, $i\in Q$, with
equations, we get the set 
\begin{equation}\label{eq3.5}
\pde := \{ (x,s) \in \PL : x_i = 0 \ \vee \ x_i =1, \ i \in
Q\},
\end{equation}
whose convex hull, $\convpde$, we call {\em the 0-1 disjunctive hull}. For a
general mixed integer program, the 0-1 Disjunctive Hull is not a
valid relaxation, in that it may cut off nonbinary feasible integer
points. Indeed, we have
$$\convpd \supseteq \convpi \supseteq \convpde,$$
where both inclusions are strict and are valid in the context of
mixed integer 0-1 programs only, since all the non-0-1 integer points
that it cuts off are infeasible.
Hence $\convpde$ is equivalent to the convex
hull of $\PI \cap \{x : x_i \le 1, \ i \in Q\}$, or the
integer hull of $\PI$ reinforced with the bounds on the $x_i$.
However, as we will see later on, finding facets of $\convpde$
requires roughly the same computational effort as finding facets of
$\convpd$.

The upshot of this is that for the important class of mixed integer
0-1 programs, all facet defining inequalities of $\convpde$ are
facet defining for the integer hull. Furthermore, from the
sequential convexification theorem of disjunctive programming, all
such inequalities are of split rank $\le q$, i.e. they can be
obtained by applying a split cut generating procedure at most $q$
times recursively.

The set $\PD$ of (\ref{eq3.4}) is the collection of those points
$(x,s) \in \Rq \times \Rn$ satisfying (\ref{eq3.1}) and $x_i \le 0 \
\vee \ x_i \ge 1$, $i \in Q$. Put in disjunctive normal form, this
last constraint set becomes
\begin{equation}\label{eq3.6}
\left( \begin{array}{c} x_1 \le 0 \\ x_2 \le 0 \\ \vdots
\\ x_q \le 0\end{array} \right) \ \vee \ \left( \begin{array}{c} x_1 \ge 1 \\ x_2 \le 0 \\ \vdots
\\ x_q \le 0\end{array} \right) \ \vee \ \cdots \ \vee \ \left( \begin{array}{c} x_1 \ge 1 \\ x_2 \ge 1 \\ \vdots
\\ x_q \ge 1\end{array} \right)
\end{equation}

Each term of (\ref{eq3.6}) defines an orthant-cone with apex at a
vertex of the $q$-dimensional unit cube. These $2^q$ orthant-cones
are illustrated for $q=2$ in Figure~\ref{figure3}.
\begin{figure}[htbp]
\begin{picture}(10,200)(-110,0)\setlength{\unitlength}{.15cm}
\put(6,31){\line(1,1){13}}
\put(6,33){\line(1,1){11}}
\put(6,35){\line(1,1){9}}
\put(6,37){\line(1,1){7}}
\put(6,39){\line(1,1){5}}
\put(8,31){\line(1,1){11}}
\put(10,31){\line(1,1){9}}
\put(12,31){\line(1,1){7}}
\put(20,30){\line(-1,0){15}}
\put(20,30){\line(0,1){15}}
\put(20,30){\circle*{1}}
\put(18,27){0,1}
\put(20,15){\line(-1,0){15}}
\put(20,15){\line(0,-1){15}}
\put(20,15){\circle*{1}}
\put(18,16.5){0,0}
\put(6,1){\line(1,1){13}}
\put(6,3){\line(1,1){11}}
\put(6,5){\line(1,1){9}}
\put(6,7){\line(1,1){7}}
\put(6,9){\line(1,1){5}}
\put(8,1){\line(1,1){11}}
\put(10,1){\line(1,1){9}}
\put(12,1){\line(1,1){7}}
\put(35,30){\line(1,0){15}}
\put(35,30){\line(0,1){15}}
\put(35,30){\circle*{1}}
\put(33,27){1,1}
\put(36,31){\line(1,1){13}}
\put(36,33){\line(1,1){11}}
\put(36,35){\line(1,1){9}}
\put(36,37){\line(1,1){7}}
\put(36,39){\line(1,1){5}}
\put(38,31){\line(1,1){11}}
\put(40,31){\line(1,1){9}}
\put(42,31){\line(1,1){7}}
\put(35,15){\line(1,0){15}}
\put(35,15){\line(0,-1){15}}
\put(35,15){\circle*{1}}
\put(33,16.5){1,0}
\put(36,1){\line(1,1){13}}
\put(36,3){\line(1,1){11}}
\put(36,5){\line(1,1){9}}
\put(36,7){\line(1,1){7}}
\put(36,9){\line(1,1){5}}
\put(38,1){\line(1,1){11}}
\put(40,1){\line(1,1){9}}
\put(42,1){\line(1,1){7}}
\end{picture}
\caption{Orthant-cones for the case $q=2$}
\label{figure3}
\end{figure}
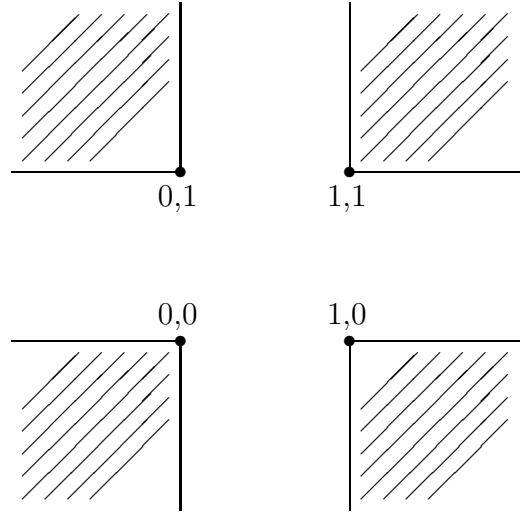

Using (\ref{eq3.1}) to eliminate the $x$-components and denoting by
$r^i$ the $i$-th row of the $q \times n$ matrix $R =
(\rj)^n_{j=1}\,$, (\ref{eq3.6}) can be represented in $\Rn$ as $s
\ge 0$ and
\begin{equation}\label{eq3.7}
\left( \begin{array}{rcl} -r^1s & \ge & f_1 \\ -r^2 s & \ge & f_2
\\ & \vdots & \\ -r^q s & \ge & f_q \end{array}\right) \ \vee \ \left( \begin{array}{rcl} r^1s & \ge & 1- f_1 \\ -r^2 s & \ge & f_2
\\ & \vdots & \\ -r^q s & \ge & f_q \end{array}\right) \ \vee \
\cdots \ \vee \ \left( \begin{array}{rcl} r^1s & \ge & 1-f_1 \\ r^2
s & \ge & 1- f_2
\\ & \vdots & \\ r^q s & \ge & 1- f_q \end{array}\right)
\end{equation}

If $\Pni \sse \Rn$ denotes the polyhedron defined by the $i$-th term
of this disjunction plus the constraints $s \ge 0$, then $\PD$ can
be defined in $n$-space as $\Pnd = \cup^t_{i=1} \Pni$ where $t =
2^q$. Furthermore, we have the following:

\begin{theorem}\label{th5.1}
$\convpnd$ is the set of those $s \in \Rn$ satisfying $s \ge 0$ and
all the inequalities $\alpha s \ge 1$ whose coefficient vectors
$\alpha \in \Rn$ satisfy the system

{\scriptsize
\begin{equation}\label{eq3.8}
\begin{array}{r}
\alpha + r^1 u_{11} + \cdots + r^q u_{1q} ~~~~~~~~~~~~~~~~~~~~~~~~~~~~~~~~~~~~~~~~~~~~~~~~~~~~~~~~~~~~~~~~~~~~~~~~ \ge 0\\[6pt]
\alpha ~~~~~~~~~~~~~~~~~~~~~~~~~~~~~~~- r^1 u_{21} + \cdots + r^q
u_{2q}
~~~~~~~~~~~~~~~~~~~~~~~~~~~~~~~~~~~~~~~~~ \ge 0\\[6pt]
\vdots~~~~~~~~~~~~~~~~~~~~~~~~~~~~~~~~~~~~~~~~~~~~~~~~~~~~~~~~~~~~~~~~\ddots ~~~~~~~~~~~~~~~~~~~~~~~~~~~~~~~~~\vdots ~~\\[6pt]
\alpha
~~~~~~~~~~~~~~~~~~~~~~~~~~~~~~~~~~~~~~~~~~~~~~~~~~~~~~~~~~~~~~~~~~~~~~~~~-
r^1u_{t1} -
\cdots - r^qu_{tq} \ge 0\\[6pt]
f_1 u_{11} + \cdots + f_q u_{1q} ~~~~~~~~~~~~~~~~~~~~~~~~~~~~~~~~~~~~~~~~~~~~~~~~~~~~~~~~~~~~~~~~~~~~~~~~ \ge 1\\[6pt]
~~~~~~~~~~~~~~~~~~~~~~~~~~~(1\!-\!f_1)u_{21} + \cdots + f_q
u_{2q}~~~~~~~~~~~~~~~~~~~~~~~~~~~~~~~~~~~~~~~~~ \ge 1\\[6pt]
\ddots ~~~~~~~~~~~~~~~~~~~~~~~~~~~~~~~~~~~~~~ \vdots~~\\[6pt]
  (1\!-\!f_1) u_{t1} + \cdots
+ (1\!-\!f_q) u_{tq} \ge 1
\end{array}
\end{equation}}
for some $u_{ik} \ge 0$, $i=1,\ldots,t=2^q, k=1,\ldots,q$.
\end{theorem}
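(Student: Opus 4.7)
The plan is to derive the characterization as a direct application of the fundamental representation theorem of disjunctive programming for the convex hull of a union of polyhedra, combined with LP duality (Farkas' lemma) applied one term at a time. Writing $\Pnd = \bigcup_{i=1}^{t} \Pni$ with $t = 2^q$, this theorem tells us that an inequality $\alpha s \ge 1$ is valid for $\convpnd$ if and only if it is valid for each $\Pni$. Since $s \ge 0$ is already enforced inside every $\Pni$, what remains is to characterize the coefficient vectors $\alpha$ for which $\alpha s \ge 1$ is valid on every term.

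To do this uniformly, I would index the $t$ terms by vertices $\sigma^i \in \{0,1\}^q$ of the unit cube and write $\Pni = \{s \in \Rnplus : \epsilon^i_k r^k s \ge \gamma^i_k, \ k \in Q\}$, where $(\epsilon^i_k, \gamma^i_k) = (-1, f_k)$ when $\sigma^i_k = 0$ and $(\epsilon^i_k, \gamma^i_k) = (+1, 1 - f_k)$ when $\sigma^i_k = 1$, so that the first term of (\ref{eq3.7}) corresponds to $\sigma^i \equiv 0$ and the last to $\sigma^i \equiv 1$. Applying LP duality to the polyhedron $\Pni$, the inequality $\alpha s \ge 1$ is valid for $\Pni$ if and only if there exist multipliers $u_{i1}, \ldots, u_{iq} \ge 0$ satisfying $\alpha - \sum_{k=1}^{q} u_{ik} \epsilon^i_k r^k \ge 0$ componentwise (the slack absorbing the bounds $s \ge 0$) and $\sum_{k=1}^{q} u_{ik} \gamma^i_k \ge 1$. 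These are precisely the two conditions appearing in the $i$-th pair of blocks of (\ref{eq3.8}).

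Assembling these multiplier conditions over all $t$ terms then yields (\ref{eq3.8}) as exactly the condition for $\alpha s \ge 1$ to be valid on $\convpnd$, which proves the inclusion $\convpnd \subseteq \{s \ge 0\} \cap \{s : \alpha s \ge 1 \text{ for all such } \alpha\}$. For the reverse inclusion, since $0 < f_k < 1$ forces every $\gamma^i_k > 0$, each $\Pni$ is separated from the origin, so $0 \notin \convpnd$; hence every facet of $\convpnd$ that is not among the bounds $s_j \ge 0$ has strictly positive right-hand side and can be normalized to $1$, ensuring that no facet is missed by restricting to cuts of the form $\alpha s \ge 1$. The main issue, which is essentially bookkeeping rather than a real obstacle, is to verify that the signs $\epsilon^i_k$ assemble into (\ref{eq3.8}) exactly as written: the first block has all $+r^k u_{1k}$ (from $\epsilon^1_k \equiv -1$) with right-hand coefficients $f_k$; the last block has all $-r^k u_{tk}$ (from $\epsilon^t_k \equiv +1$) with right-hand coefficients $1 - f_k$; intermediate blocks interleave signs and coefficients according to the binary label of $i$. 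Empty terms (if any) contribute nothing to the hull and may be dropped without affecting the characterization.
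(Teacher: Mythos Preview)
Your argument is correct and leads to the same multiplier system~(\ref{eq3.8}), but the route differs from the paper's. The paper invokes the higher-dimensional (lift-and-project) representation of $\convpnd$: it explicitly writes out the extended formulation~(\ref{eq3.9}) in the auxiliary variables $(s^i,z_i)$, and then projects onto $s$-space, reading off the multiplier conditions~(\ref{eq3.10}); the normalization $\beta=1$ is justified by noting that $\beta=-1$ makes the projected system unbounded, so no nontrivial cuts arise that way. You instead bypass the extended formulation entirely, using only the elementary fact that an inequality is valid for $\conv(\cup_i \Pni)$ if and only if it is valid for every $\Pni$, and then applying LP duality term by term. The multipliers $u_{ik}$ you obtain are exactly the projection multipliers of the paper, so the two arguments are dual views of the same computation; yours is more self-contained, while the paper's embeds the result in the lift-and-project framework used throughout. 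One small point: what you call ``the fundamental representation theorem of disjunctive programming'' is really the extended-formulation theorem, which you do not actually use; the fact you invoke (validity on the hull $\Leftrightarrow$ validity on each term) is more elementary than that. Your handling of the normalization via $0\notin\convpnd$ is at the same informal level as the paper's remark about unboundedness at $\beta=-1$; neither spells out the $\beta=0$ case, but both are easily completed.
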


\begin{proof}
Applying the basic theorem of Disjunctive Programming
to $\convpnd$ we introduce auxiliary variables $s^i \in \Rn$, $z_i
\in \R$, $i=1,\ldots,t=2^q$, and obtain the higher-dimensional
representation

{\small 
\begin{equation}\label{eq3.9}
\begin{array}{rrcrrrcrcl}
s~~-s^1    & -s^2 & \ldots & -s^t &            &                 &&& =   &
0\\[6pt]
-r^1 s^1 &      &        &      & -f_1 z_1    &                 &&& \ge & 0\\[6pt]
-r^2 s^1 &      &        &      & -f_2 z_1 &                 &&& \ge & 0\\[6pt]
\vdots~~~&      &        &      & \vdots ~~&                 &&& \vdots \\[6pt]
-r^q s^1 &      &        &      & -f_q z_1   &                 &&& \ge & 0\\[6pt]
         & r^1s^2&       &      &            & -(1\!-\!f_1)z_2 &&& \ge & 0\\[6pt]
         & -r^2 s^2&     &      &            & -f_2 z_2        &&& \ge & 0\\[6pt]
         &  \vdots~~ &     &      &            & \vdots  ~~        &&&
         \vdots \\[6pt]
         & -r^q s^2&     &      &            & -f_q z_2        &&& \ge & 0\\[6pt]
         &      & \ddots &      &            &                 & \ddots && \vdots\\[6pt]
         &      &        & -r^1 s^t&         &                 &        &  -(1\!-\!f_1) z_t & \ge & 0\\[6pt]
         &      &        & -r^2 s^t&         &                 &        &  -(1\!-\!f_2) z_t & \ge & 0\\[6pt]
         &      &        & \vdots~~&         &                 &        &  \vdots ~~~~& \vdots  \\[6pt]
         &      &        & -r^q s^t&         &                 &        &  -(1\!-\!f_q) z_t & \ge &
         0\\[6pt]
         &&&& z_1 & + z_2 & + \cdots & + z_t & = & 1\\[6pt]
\multicolumn{10}{c}{s^i \ge 0, \ i=1,\ldots,t; \ \ z_i \ge 0, \
i=1,\ldots,t}
\end{array}
\end{equation}}

Projecting this system onto the $s$-space with multipliers $\alpha$;
$u_{11},\ldots,u_{1q}$; $u_{21},\ldots,u_{2q}$; $\ldots$;
$u_{t1},\ldots,u_{tq}$, we obtain

{\scriptsize \begin{equation}\label{eq3.10}
\begin{array}{rlrrrrrrrrrcl}
\alpha & +~~r^1u_{11} & + \cdots + & r^qu_{1q} & &&&& \ge & 0\\[6pt]
\vdots & & & & \ddots &&&& \vdots~ &\\[6pt]
\alpha & & & &       & -r^1u_{t1} ~~~~~& - \cdots - & -r^qu_{tq} &
\ge & 0\\[6pt]
& -\beta + f_1 u_{11} & + \cdots + & f_q u_{1q} & & & & & \ge & 0\\[6pt]
& \vdots~~~~~~~~ & & & \ddots &&&& \vdots~ &\\[6pt]
& - \beta &&&& +(1\!-\!f_1)u_tf_1 & + \cdots + & (1\!-\!f_q)u_{tq} &
\ge & 0\\[6pt]
 \multicolumn{9}{c}{u_{ik} \ge 0, \ i=1,\ldots,t,\ k=1,\ldots,q}
\end{array}
\end{equation}}

Applying the normalization $\beta=1$ (clearly $\beta = -1$ does not
yield any cuts since it makes (\ref{eq3.10}) unbounded) we obtain the
representation given in the theorem.
\end{proof}

In order to restate the system (\ref{eq3.8}) in a more concise form,
for each $i \in \{ 1, \ldots, t\}$ we partition the index set $Q :=
\{1,\ldots,q\}$ into
\begin{tabbing}
~~~~~~\= $Q^+_i := \{ k \in Q : u_{ik}$ has coefficient vector $r_k
\}$\\
\> $Q^-_i := \{ k \in Q : u_{ik}$ has coefficient vector $-r_k \}$,
\end{tabbing}
with $Q^+_i \cup Q^-_i = Q$, $i=1,\ldots,t=2^k$. Then (\ref{eq3.8})
can be restated as
$$\begin{array}{rclclcll}
\displaystyle\alpha &\!\!\!+\!\!\!& \displaystyle\sum \left(r^k u_{ik} : k \in
Q^+_i\right) &\!\!\!-\!\!\!\!& \displaystyle\sum\left(r^k u_{ik} : k
\in Q^-_i\right) & \ge & 0\\[12pt]
&& \displaystyle\sum(f_k u_{ik} : k \in Q^+_i) &\!\!\!\!\!\!\!+\!\!\!\!\!\!\!&
\displaystyle\sum( (1-f_k) u_{ik} : k \in
Q^-_i) & \ge & 1, & i=1,\ldots,t\\[12pt]
\multicolumn{8}{c}{\displaystyle u_{ik} \ge 0, \ i=1,\ldots,t=2^q, \
k=1,\ldots,q}
\end{array}\eqno{\mbox{(\ref{eq3.8}$'$)}}\label{eq3.8'}
$$

The system (\ref{eq3.8}$'$) has several interesting properties
described in the next few propositions.

\begin{proposition}\label{pr2}
For any $p \in \Rn$, $p > 0$, all optimal basic solutions to the cut
generating linear program
$$\min \{ p \alpha : (\alpha,u) \mbox{ satisfies (\ref{eq3.8}$'$)}\}
\eqno{\mbox{\rm (CGLP)$_Q$}}
$$
are of the form
\begin{equation}\label{eq3.11}
\alpha_j = \max \{ \alpha^1_j,\ldots,\alpha^t_j\},
\end{equation}
where
\begin{equation}\label{eq3.12}
\alpha^i_j := - \sum (r_j^k u_{ik} : k \in Q^+_i) + \sum (r_j^k
u_{ik} : k \in Q^-_i),
\end{equation}
$i=1,\ldots,t=2^q$, with the $u_{ik}$ satisfying (\ref{eq3.8}$\,'$).
\end{proposition}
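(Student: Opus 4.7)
The plan is to read off the constraint structure of $(\ref{eq3.8}')$ and exploit the fact that $\alpha$ appears only in the first block of inequalities, not in the normalization rows, together with the positivity of $p$.

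First I would write the $i$-th inequality in the first block of $(\ref{eq3.8}')$ componentwise: for each $j \in J$ and each $i \in \{1,\ldots,t\}$,
\begin{equation*}
\alpha_j \,\ge\, -\!\sum_{k \in Q^+_i} r^k_j u_{ik} + \sum_{k \in Q^-_i} r^k_j u_{ik} \,=\, \alpha^i_j,
\end{equation*}
so that any feasible $(\alpha,u)$ satisfies $\alpha_j \ge \max_{i=1,\ldots,t}\alpha^i_j$ for every $j$, where the right-hand side is determined entirely by $u$.

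Next I would observe that the $\alpha$-variables do not appear in the last $t$ (normalization) rows of $(\ref{eq3.8}')$, which involve only the $u_{ik}$. Consequently, given any feasible $u$, the set of feasible $\alpha$ is exactly $\{\alpha : \alpha_j \ge \max_i \alpha^i_j,\ j \in J\}$, and any single $\alpha_j$ can be reduced down to its lower bound without affecting the feasibility of $u$ or of the other $\alpha$-components. Suppose now that $(\alpha,u)$ is optimal but $\alpha_j > \max_i \alpha^i_j$ for some $j$. Decreasing $\alpha_j$ to $\max_i \alpha^i_j$ preserves feasibility and, because $p_j > 0$, strictly decreases $p\alpha$, contradicting optimality. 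Hence every optimal $\alpha$ satisfies $\alpha_j = \max_i \alpha^i_j$, which is $(\ref{eq3.11})$; this holds \emph{a fortiori} for optimal basic solutions.

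There is essentially no technical obstacle here: the result follows directly from the block structure of $(\ref{eq3.8}')$ once it is parsed coordinatewise. The one point that must be made explicit is that $\alpha$ is absent from the normalization block, so that reducing an individual $\alpha_j$ does not force any compensating change in $u$ and therefore does not jeopardize the $\ge 1$ inequalities; this is what makes the one-step improvement argument work uniformly over all coordinates.
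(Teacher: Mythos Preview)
Your argument is correct and is essentially the paper's own proof: read the first block of $(\ref{eq3.8}')$ as $\alpha_j \ge \alpha^i_j$ for all $i,j$, then note that if some $\alpha_{j_*}$ strictly exceeds $\max_i \alpha^i_{j_*}$ it can be lowered to that maximum without affecting feasibility, strictly improving $p\alpha$ since $p_{j_*}>0$. Your explicit observation that $\alpha$ is absent from the normalization block is a useful clarification but not a departure from the paper's reasoning.
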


\begin{proof}
The constraints of (\ref{eq3.8}$'$) require
$$\alpha_j \ge \alpha^i_j, \quad i=1,\ldots,t, \ j=1,\ldots,n
$$
Suppose there is an optimal solution to \cglpq\ such that
$\alpha_{j_*} > \max \{ \alpha^i_{j_*}: i = 1,\ldots,t\}$ for some
$j_* \in \{ 1,\ldots,n\}$. Then setting $\al_{\js}$ equal to the
maximum on the righthand side, and leaving $\al_j$ unchanged for all
$j \ne j_*$ yields a better solution, contrary to the assumption.
\end{proof}

\begin{proposition}\label{pr3}
In any valid inequality $\al s \ge 1$ for $\convpnd$, $\al_j \ge 0$,
$j = 1,\ldots,n$.
\end{proposition}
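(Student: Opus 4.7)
The plan is to exhibit, for each fixed $j \in \{1, \ldots, n\}$, a specific term $\istar \in \{1, \ldots, 2^q\}$ of the disjunction (\ref{eq3.6}) for which $\al^{\istar}_j$ is non-negative. Since Proposition~\ref{pr2} (equivalently, the first block of $(\ref{eq3.8}')$) forces $\al_j \ge \al^i_j$ for every $i$, this will immediately give $\al_j \ge \al^{\istar}_j \ge 0$.

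First I would observe that the assignment $i \mapsto (Q^+_i, Q^-_i)$ is a bijection between $\{1, \ldots, 2^q\}$ and the collection of all ordered bipartitions of $Q$, since the enumeration in (\ref{eq3.6}) runs through every possible sign pattern of $x_k \le 0$ vs.\ $x_k \ge 1$. Writing $r_j = (r^1_j, \ldots, r^q_j)^T$ for the $j$-th column of the matrix $R$, I would then select $\istar$ to be the index corresponding to the partition
\[
Q^+_{\istar} = \{k \in Q : r^k_j \le 0\}, \qquad Q^-_{\istar} = \{k \in Q : r^k_j > 0\}.
\]
Substituting this choice into (\ref{eq3.12}) gives
\[
\al^{\istar}_j = -\sum_{k \in Q^+_{\istar}} r^k_j u_{\istar k} + \sum_{k \in Q^-_{\istar}} r^k_j u_{\istar k},
\]
where in the first sum $-r^k_j \ge 0$ and in the second $r^k_j > 0$; since $u_{\istar k} \ge 0$ throughout, every term is non-negative, so $\al^{\istar}_j \ge 0$, and therefore $\al_j \ge 0$.

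Geometrically, the above argument is saying that $e_j$ is a recession direction of $\convpnd$: starting from any point $s^0 \in \Pnd$, the ray $s^0 + \lambda e_j$ eventually lies in $\Pnis$, because each coordinate $x^0_k + \lambda r^k_j$ tends to $+\infty$, to $-\infty$, or stays constant according to the sign of $r^k_j$, and these limits match exactly the disjunctive conditions defining $\Pnis$. No genuine obstacle arises; the only step deserving brief mention is the verification that the partition of $Q$ dictated by the signs of $r_j$ is realized by one of the $2^q$ disjunctive terms, which is immediate from the exhaustive enumeration in (\ref{eq3.6}).
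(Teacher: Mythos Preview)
Your proof is correct and follows the same approach as the paper's: both argue that among the $2^q$ sign patterns of the terms $r^k_j u_{ik}$ appearing in the expressions $\alpha^i_j$ of (\ref{eq3.12}), at least one yields a nonnegative value, whence $\alpha_j \ge \max_i \alpha^i_j \ge 0$. Your version simply makes this explicit by exhibiting the particular $i_*$ via the partition $Q^+_{i_*}=\{k: r^k_j\le 0\}$, $Q^-_{i_*}=\{k: r^k_j>0\}$, which is precisely what the paper's phrase ``presence of all sign patterns'' is alluding to.
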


\begin{proof}
From (\ref{eq3.11}), $\al_j \ge \al^i_j$ for all $i=1,\ldots,\twoq$,
and in view of the presence of all sign patterns of $r_j^k u_{ik}$ in  the expressions
(\ref{eq3.12}), there is always an index $i \in \{ i,\ldots,2^q\}$ with $\al^i_j \ge 0$.
\end{proof}

\begin{proposition}\label{pr4}
For any basic solution $(\al,u)$ to \cglpq\ that satisfies as strict
inequality some of the nonhomogeneous constraints of (\ref{eq3.8}$\,'$),
there exists a basic solution $(\bal,u)$, with $\bal = \al$, that
satisfies at equality all the nonhomogeneous constraints of \cglpq.
\end{proposition}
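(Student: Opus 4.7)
The plan is to scale the multipliers term by term so that each nonhomogeneous constraint of \cglpq\ is forced to equality without changing $\al$. For every $i=1,\ldots,t=\twoq$, let
$$
\mu_i := \sum_{k \in \Qpi} f_k u_{ik} + \sum_{k \in \Qmi}(1-f_k) u_{ik}
$$
denote the LHS of the $i$-th nonhomogeneous constraint of (\ref{eq3.8}$'$) evaluated at $u$. Feasibility of $(\al,u)$ gives $\mu_i \ge 1$, with strict inequality for at least one $i$ by hypothesis. I would define $\bu_{ik} := u_{ik}/\mu_i$ for all $(i,k)$ and set $\bal := \al$.

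The next step is to verify that $(\bal,\bu)$ is feasible for \cglpq\ and satisfies every nonhomogeneous constraint at equality. By construction the $i$-th nonhomogeneous LHS becomes $\mu_i/\mu_i = 1$. For the homogeneous constraints, definition (\ref{eq3.12}) gives $\al^i_j(\bu_{i\cdot}) = \al^i_j(u_{i\cdot})/\mu_i$; combining $\mu_i\ge 1$, $\al_j\ge 0$ (Proposition~\ref{pr3}), and the original inequality $\al_j\ge\al^i_j(u_{i\cdot})$, one obtains $\bal_j \ge \al^i_j(\bu_{i\cdot})$ in both cases $\al^i_j(u_{i\cdot})\ge 0$ and $\al^i_j(u_{i\cdot})<0$. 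Nonnegativity $\bu\ge 0$ is immediate since $u\ge 0$ and $\mu_i>0$.

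What remains is to promote this feasible $(\bal,\bu)$ to a basic solution of \cglpq. Let $F_=$ be the set of points of the feasible region of \cglpq\ at which every nonhomogeneous constraint of (\ref{eq3.8}$'$) is tight; this is a face of the \cglpq-polyhedron, so extreme points of $F_=$ are basic solutions of \cglpq. The slice $\{u : (\bal,u)\in F_=\}$ is a nonempty polyhedron in $\R^{tq}_+$ (it contains $\bu$), hence has extreme points. The plan is to take an extreme point $u^*$ of this slice and argue, via the rigidity exploited in the proof of Proposition~\ref{pr2}, that at any vertex of the \cglpq-polyhedron each $\al_j$ must be pinned by a tight homogeneous constraint $\al_j = \al^i_j(u_{i\cdot})$; combining this with the $t$ tight nonhomogeneous equalities and the tight nonnegativity constraints at $u^*$ gives enough linearly independent active constraints for $(\bal,u^*)$ to be a vertex of $F_=$. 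The scaling construction itself is routine; the main obstacle I expect is this last basicness verification, which requires checking that the slice extreme point lifts to a vertex of the full polyhedron and not merely of the slice.
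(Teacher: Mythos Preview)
Your scaling construction $\bu_{ik}=u_{ik}/\mu_i$ is correct and is more direct than the paper's route. The paper handles one slack nonhomogeneous constraint at a time: fixing $\al$ and all $u_{ik}$ with $i$ outside that constraint, it sets up an auxiliary linear program in the remaining $u_{1k}$'s and their surpluses, and uses LP duality (together with $\bal_j\ge 0$) to show the optimal surplus $\theta$ is $0$; this is then applied recursively across the slack constraints. Your one-shot rescaling, with the sign split on $\al^i_j$ and the appeal to Proposition~\ref{pr3}, achieves the same feasibility conclusion in a single stroke. What the paper's argument buys is that only the $u_{i\cdot}$ attached to a slack constraint are altered, so the original multipliers for already-tight terms are preserved; your scaling changes nothing for those terms either (since $\mu_i=1$ there), so on this point the two approaches coincide.

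On basicness: the specific plan you sketch --- take an extreme point $u^*$ of the $\al$-slice of $F_=$ and count tight constraints --- does not go through as written. The homogeneous constraints $\al_j=\al^{i(j)}_j(u)$ that pin each $\al_j$ at the \emph{original} basic $(\al,u)$ may go slack after rescaling (whenever $\mu_{i(j)}>1$ and $\al_j>0$) and there is no reason they are recovered at $u^*$; so you cannot invoke the ``rigidity'' of Proposition~\ref{pr2} at $(\bal,u^*)$ before knowing it is a vertex. That said, the paper's own proof does not establish basicness either: it only exhibits a \emph{feasible} $(\bal,\bu)$ with $\bal=\al$ lying in the face $F_=$, and this is exactly what is used downstream (replacing the nonhomogeneous inequalities by equations leaves the set of achievable $\al$'s unchanged). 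So your feasibility argument already matches the paper's level of rigor, via a shorter and cleaner construction; the basicness clause in the statement should be read as a mild overstatement in both cases.
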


\begin{proof}
Let $(\al,u)$ be a basic solution to \cglpq\ that satisfies as
strict inequality some of the nonhomogeneous constraints of
(\ref{eq3.8}$'$). W.l.o.g., assume that
$$f_1u_{11} + \cdots + f_q u_{1q} - \theta =1
$$
is one of those constraints with the surplus variable $\theta$
positive in the solution $(\al,u)$. We will show that there exists a
solution $(\bal,\bu)$, with $\bal = \al$ and $\bu_{ik} = u_{ik}$ for
all $i \ne 1$ and all $k$, such that
$$f_1 \bu_{11} + \cdots + f_q \bu_{1q} = 1.
$$
Applying this argument recursively then proves the Proposition.

Fix all variables of \cglpq\ except for $u_{11},\ldots,u_{1q}$, at
their values in the current solution. The fixing includes all the
surplus variables except those in the  $n+1$ rows containing
$u_{11},\ldots,u_{1q}$. This leaves the following constraint set in
the free variables:
\begin{equation}\label{eq3.13}
\begin{array}{rcll}
-r_j^1 u_{11} - \cdots - r_j^q u_{1q} + t_j & = & \bal_j &~~~~~
j=1,\ldots,n\\[6pt]
f_1 u_{11} + \cdots + f_q u_{1q} - \theta &=& 1\\[6pt]
\multicolumn{4}{c}{u_{11}, \ldots, u_{1q} \ge 0, \ t_j \ge 0, \
j=1,\ldots,n, \ \theta \ge 0}
\end{array}
\end{equation}
Here $\theta, t_j$ represent the surplus variables of the respective
constraints. We claim that this system has a solution with
$\theta=0$. To see this, consider the linear program
$$\min \{ \theta : u_{ik}, t_j \mbox{ and } \theta \mbox{ satisfy
(\ref{eq3.13})}\}
$$
and its dual,
$$\max \lambda_0 + \sum^n_{j=1} \bal_j
\lambda_j~~~~~~~~~~~~~~~~~~~~~~~~~~~~
$$
subject to
$$\begin{array}{rcll}
f_1 \lambda_0 - \sum\limits^n_{j=1} r_j^1 \lambda_j &\le&0\\[6pt]
\vdots~~~~~~~~~~~~~ &\vdots& \\[6pt]
f_q \lambda_0 - \sum\limits^n_{j=1} r_j^q \lambda_j&\le&0\\[6pt]
-\lambda_0 ~~~~~~~~~~~~~~&\le& 1\\[6pt]
\lambda_j &\le&0, & j=1,\ldots,n
\end{array}
$$
Since $\bal_j \ge 0$, $j=1,\ldots,n$, it is not hard to see that the dual linear program has an optimal
solution $\lambda_0=0$, $\lambda_j=0$, $j=1,\ldots,n$ and hence the
primal has an optimal solution with $\theta=0$.
\end{proof}

The obvious and important consequence of
Proposition~\ref{pr4} is that for all practical purposes we can
replace all $\twoq$ nonhomogeneous inequalities in the constraint
set (\ref{eq3.8}$'$) of \cglpq\ with equations. In view of
Proposition~\ref{pr2}, it then follows that we may restrict our
attention to basic feasible solutions that satisfy at equality
$n+\twoq$ out of the $n \times \twoq + \twoq$ inequalities of
(\ref{eq3.8}$'$) other than the nonnegativity constraints.

At this point we introduce the characterization of $\convpde$, the
0-1 disjunctive hull defined by (\ref{eq3.5}), closely related to
that of $\convpd$. Just as in the case of $\PD$, we denote by
$P_D^{=(n)}$ the union of polyhedra in $\Rn$ representing the disjunction
(\ref{eq3.7}) in which all the inequalities have been replaced by equations. The following Theorem is the analog of Theorem~\ref{th5.1} for this case.

\begin{theorem}\label{th4.5}
$\convpd^{=(n)}$ is the set of those $s \in \R^n$ satisfying $s \ge
0$ and all inequalities $\al s \ge \beta$ whose coefficients satisfy
the system
\begin{equation}\label{eq3.14}
\begin{array}{lllcl}
\al  + r^1 u_{11} + \cdots + r^1 u_{1q} &&& \ge & 0\\
~\vdots & \ddots & & \vdots & \\
\al & &-r^1 u_{t1} - \cdots - r^q u_{tq} & \ge & 0\\
~~-\!\beta + f_1 u_{11} + \cdots + f_q u_{1q} &&& = & 0\\
~\vdots & \ddots & & \vdots \\
~~-\!\beta & & + (1\!\!-\!\!f_1) u_{t1} + \cdots + (1\!\!-\!\!f_q)u_{tq} & = & 0
\end{array}
\end{equation}
for some $u_{ik}$, $i=1,\ldots,t=\twoq$, $k=1,\ldots,q$.
\end{theorem}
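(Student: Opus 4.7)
My proof plan is to mirror that of Theorem~\ref{th5.1}, adapting it to the stronger equation-disjunction that defines $\pde$.

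First, I would rewrite each term of (\ref{eq3.5}) in the nonbasic-variable space via the substitution $x = f + \sum_j r^j s_j$. For term $i$ this produces the polyhedron $\{s \ge 0 : -r^k s = f_k \text{ for } k \in Q^+_i,\ r^k s = 1-f_k \text{ for } k \in Q^-_i\}$. Their union is $P_D^{=(n)}$, the equation-analog of $\Pnd$.

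Next, I would apply the basic theorem of disjunctive programming to obtain a higher-dimensional representation of $\convpd^{=(n)}$ analogous to (\ref{eq3.9}). Introduce auxiliaries $s^i \in \Rn$ and $z_i \in \R$ for $i = 1, \ldots, t = \twoq$, subject to $s - \sum_i s^i = 0$, $\sum_i z_i = 1$, $s^i \ge 0$, $z_i \ge 0$, and for each $i$ the lifted constraints $-r^k s^i - f_k z_i = 0$ (for $k \in Q^+_i$) and $r^k s^i - (1-f_k) z_i = 0$ (for $k \in Q^-_i$). The crucial difference from (\ref{eq3.9}) is that these lifted disjunctive constraints are now equations rather than inequalities, which is what forces the changes appearing in (\ref{eq3.14}).

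I would then project onto the $s$-space by LP duality, assigning a multiplier $\alpha \in \Rn$ to the linking equation $s = \sum_i s^i$, free multipliers $u_{ik}$ to the lifted disjunctive equations (free precisely because these constraints are equations rather than sign-constrained inequalities), and a multiplier $\beta$ to the normalization $\sum_i z_i = 1$. Matching the coefficients of $s^i$ and $z_i$ under the nonnegativity $s^i \ge 0$, $z_i \ge 0$ produces exactly the two blocks of (\ref{eq3.14}), with the sign pattern of $r^k$ and the choice between $f_k$ and $1-f_k$ driven by the partition $Q = Q^+_i \cup Q^-_i$.

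The main obstacle, as in Theorem~\ref{th5.1}, is bookkeeping: one must track the signs of $r^k$, $f_k$, and $(1-f_k)$ for each $i$ according to whether $k \in Q^+_i$ or $k \in Q^-_i$, and verify that the projected system takes exactly the form stated in (\ref{eq3.14}). The two substantive modifications relative to Theorem~\ref{th5.1} are the dropping of the sign restriction on $u_{ik}$ and the appearance of equations rather than inequalities in the nonhomogeneous block, both traceable directly to the equation form of the defining disjunction (\ref{eq3.5}).
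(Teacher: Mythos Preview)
Your overall strategy matches the paper's: mirror the proof of Theorem~\ref{th5.1}, replacing the disjunctive inequalities by equations in the lifted system and letting the multipliers $u_{ik}$ become free. That part is correct.

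There is, however, a genuine gap in your justification of the \emph{equations} in the nonhomogeneous block of~(\ref{eq3.14}). You write that both modifications---free $u_{ik}$ and the equations $-\beta + \cdots = 0$---are ``traceable directly to the equation form of the defining disjunction''. The first is; the second is not. In the lifted system the convex-combination weights $z_i$ are still constrained by $z_i \ge 0$, regardless of whether the disjunctive constraints are equations. When you match coefficients of $z_i$ in the projection, the nonnegativity of $z_i$ forces the resulting $-\beta + \cdots$ conditions to be \emph{inequalities} $\ge 0$, exactly as in~(\ref{eq3.10}). Nothing about the equation form of~(\ref{eq3.5}) changes this.

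The paper closes this gap by a separate argument: it invokes the analog of Proposition~\ref{pr4}, showing that for any basic solution $(\al,u)$ satisfying some of these nonhomogeneous constraints strictly, one can find another basic solution $(\bal,\bu)$ with $\bal=\al$ that satisfies all of them at equality. Only after this step is one entitled to replace the inequalities by equations in the statement. Your proposal needs to include this argument (or at least flag that it is required); as written, the claim that the projection ``produces exactly the two blocks of~(\ref{eq3.14})'' is false for the second block.
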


\begin{proof}
The proof of Theorem~\ref{th5.1} goes through with the following
modifications. Since the inequalities in the disjunction
(\ref{eq3.7}) are all replaced with equations, the
inequalities in the system (\ref{eq3.9}), other than the
nonnegativity constraints, also become equations. As a consequence,
the variables $u_{ik}$ of the projected system (\ref{eq3.10}) become
unrestricted in sign. The remaining difference between (\ref{eq3.14})
and (\ref{eq3.8}) is the fact that in (\ref{eq3.14}) the last $\twoq$
constraints are equations rather than inequalities. This is due to
the fact that Proposition~\ref{pr4} applies here too. In other
words, if we denote by (\ref{eq3.14}$''$) the system obtained from
(\ref{eq3.14}) by replacing the equations containing $\beta$ with
inequalities $\ge$, then for any basic solution $(\al,u)$ to
(CGLP)$_Q$ that satisfies as strict inequalities some of the
constraints (\ref{eq3.14}$''$) containing $\beta$, there exists a
basic solution $(\bal,u)$, with $\bal = \al$, that satisfies at
equality all the constraints containing $\beta$. The proof is
essentially the same as that of Proposition~\ref{pr4}.

Thus the two basic differences between the systems describing
$\convpd^{(n)}$ and $\convpd^{=(n)}$ are that (a)~the latter also
contains inequalities of the form $\al x \le 1$ (corresponding to $\beta < 0$), and (b) the
coefficients $\al_j$ of the latter can be of any sign.
\end{proof}

We now return to the simple disjunctive hull, $\convpd$, and describe its
vertices.

\begin{proposition}\label{pr5}
Every vertex of $\convpnd$ is a vertex of some $\Pni$, $i \in
\{1,\ldots,\twoq\}$.
\end{proposition}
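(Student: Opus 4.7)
The plan is to exploit the identity $\convpnd = \conv\bigl(\bigcup_{i=1}^{t} \Pni\bigr)$ and then use the standard extreme-point argument: a vertex of the convex hull of a union of polyhedra must lie in one of the polyhedra, and in fact be a vertex of it.

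First, let $\bar{s}$ be a vertex of $\convpnd$. Since each $\Pni$ is a polyhedron (the recession cone is the same for every $i$, but this is irrelevant here), standard disjunctive programming tells us that every point of $\conv\bigl(\bigcup_{i=1}^{t} \Pni\bigr)$ can be written as $\bar{s} = \sum_{i=1}^{t}\lambda_i s^i$ with $s^i \in \Pni$, $\lambda_i \ge 0$ and $\sum_{i=1}^t \lambda_i = 1$. Since each $s^i$ belongs to $\Pni \subseteq \convpnd$ and $\bar{s}$ is an extreme point of $\convpnd$, every $s^i$ with $\lambda_i > 0$ must coincide with $\bar{s}$. Picking any such index $\istar$, we conclude $\bar{s} \in \Pnis$.

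The second step is to verify that $\bar{s}$ is in fact a vertex of $\Pnis$, not merely a point of it. Suppose for contradiction that $\bar{s}$ is not extreme in $\Pnis$. Then there exist distinct $u,v \in \Pnis$ and $\mu \in (0,1)$ with $\bar{s} = \mu u + (1-\mu)v$. Because $u,v \in \Pnis \subseteq \convpnd$, this representation contradicts the extremality of $\bar{s}$ in $\convpnd$. Hence $\bar{s}$ is a vertex of $\Pnis$, as required.

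The only mild obstacle is the justification in the first step that every point of $\convpnd$ admits a convex-combination decomposition of the form $\sum_i \lambda_i s^i$ with $s^i \in \Pni$. Since the paper develops the machinery of disjunctive programming throughout, and in particular uses the higher-dimensional representation (\ref{eq3.9}) of $\convpnd$, this decomposition is immediate by projecting out the $z_i$ and $s^i$ variables after setting $\lambda_i := z_i$ and rescaling the blocks with $z_i > 0$; the blocks with $z_i = 0$ correspond to recession directions of some $\Pni$ and can be absorbed into a single $s^{i}$ term. The rest of the argument is pure extremality bookkeeping.
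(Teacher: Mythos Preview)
Your proof is correct and follows essentially the same two-step argument as the paper's own proof: first show that a vertex $\bar s$ of $\convpnd$ must lie in some $\Pni$ (otherwise it is a nontrivial convex combination of points of $\cup_i \Pni$, contradicting extremality), then show it must be a vertex of that $\Pni$ (otherwise it is a nontrivial convex combination inside $\Pni\subseteq\convpnd$). The paper's version is terser and does not spell out the decomposition step; your appeal to the lifted system~(\ref{eq3.9}) to justify the convex-combination representation is a reasonable addition. One small inaccuracy: the recession cones of the $\Pni$ are \emph{not} all equal here (the sign pattern of the constraints differs across terms), but as you note this is irrelevant to the argument, and your absorption of any recession component into a block with positive weight handles the issue.
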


\begin{proof}
Let $v$ be a vertex of $\convpnd$. If $v \in \Pni$ for some $i \in
\{1,\ldots,t=2^q\}$, then $v$ must be a vertex of $\Pni$, or else it
could be expressed as a convex combination of points in $\Pni$,
hence of $\Pnd$. On the other hand, if $v \not\in \cup \Pni$ but $v
\in \conv \Pni $, then $v$ is a convex combination of points in
$\cup^t_{i=1} \Pni$, hence of $\convpnd$, a contradiction.
\end{proof}

Next we describe the vertices of $\Pni$, $i\in \{1,\ldots,\twoq\}$.
We will call a vertex of $\convpnd$ (of $\Pni$) integer if it
defines an integer $x$ through (\ref{eq3.1}); in other words if $f_i
+ r^i s$ is integer for $i = 1,\ldots,q$. All other vertices will be
called fractional.

For any particular $i_* \in \{1,\ldots,\twoq\}$,
$$\Pnis := \{ s \in \Rnplus : \rh s \le - \fh, \ h \in \Qis, \ \rh s
\ge 1-\fh, \ h \in Q \sm \Qis \}
$$
where $(\Qis, Q \sm \Qis)$ is the partition of $Q$ that defines
$\istar$.

\begin{proposition}\label{pr4.6}
$\Pnis$ can have three kinds of vertices, distinguished by the
corresponding $x$-vectors that belong to one of these types:
\begin{itemize}
\item[(a)] 0-1 vertices: $x_h = 0$, $h \in Q_{i_*}$ and $\xh=1$, $h
\in Q \sm \Qis$.
\item[(b)] non-binary integer vertices: $\xh \in \Z_-$, $h \in
\Qis$, $\xh \in \Z_+$, $h \in Q \sm \Qis$ (here $\Z_-$ and $\Z_+$
stand for the nonpositive and nonnegative integers respectively).
\item[(c)] fractional vertices: $\xh \le 0$, $h \in \Qis$, $\xh \ge
1$, $h \in Q \sm \Qis$, with at least one inequality strict.
\end{itemize}
\end{proposition}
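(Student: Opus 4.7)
The plan is to carry out a short case analysis on the $x$-vector associated with an arbitrary vertex $v$ of $\Pnis$, organized by the defining inequalities of $\Pnis$ and by integrality of $x$.

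First I translate the constraints of $\Pnis$ through the identity $x_h = f_h + r^h s$: the inequality $r^h s \le -f_h$ for $h \in \Qis$ is equivalent to $x_h \le 0$, and $r^h s \ge 1-f_h$ for $h \in Q \sm \Qis$ is equivalent to $x_h \ge 1$. Thus every feasible point of $\Pnis$, in particular every vertex, satisfies $x_h \le 0$ for $h \in \Qis$ and $x_h \ge 1$ for $h \in Q \sm \Qis$. This places the $x$-vector of any vertex inside the ``orthant shifted by the 0-1 corner'' described in the statement, and establishes the sign conditions appearing in all three cases (a)--(c).

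Next I split the set of possible vertex $x$-vectors according to whether any of these $q$ translated inequalities is strict. If all are tight, then $x_h = 0$ for $h \in \Qis$ and $x_h = 1$ for $h \in Q \sm \Qis$, which is exactly case (a). Otherwise some inequality is strict. I then split further by integrality of $x$: if every $x_h$ is an integer, the sign conditions force $x_h \in \Z_-$ for $h \in \Qis$ and $x_h \in \Z_+$ for $h \in Q \sm \Qis$, and strictness in at least one coordinate means this is not the (a)-pattern; this is case (b) (read as ``integer but not the 0-1 corner''). If some $x_h$ is not an integer, the sign conditions with at least one strict inequality give case (c).

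Since the three cases exhaust and mutually exclude one another (they correspond to: the 0-1 corner, an integer point different from the corner, and a non-integer point, respectively), the proposition follows. There is no genuine obstacle here — the proof is essentially the translation of the defining halfspaces of $\Pnis$ back to the $x$-variables together with the trichotomy above; the only point that requires care is reading case (b) as excluding case (a), so that the three types partition the vertices.
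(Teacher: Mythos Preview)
Your proof is correct and follows essentially the same idea as the paper's: both arguments translate the defining inequalities of $\Pnis$ back to the $x$-variables to obtain $x_h \le 0$ for $h \in \Qis$ and $x_h \ge 1$ for $h \in Q \sm \Qis$, and then observe that this forces every vertex into one of the three listed types. The only cosmetic difference is that the paper phrases the argument as ``add a fourth case (d) with $0 < x_h < 1$ for some $h$, then rule it out by the constraints,'' whereas you first establish the sign conditions (which is exactly what excludes (d)) and then trichotomize directly on integrality; the logical content is the same.
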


\begin{proof}
The three cases become exhaustive if the following fourth one is
added: (d)~fractional vertices with $0 < \xh < 1$ for some $h \in
Q$. But this case clearly violates at least one of the constraints
defining $\Pnis$.
\end{proof}

Note that $\Pnis$ can have several distinct vertices with the same
associated $x$-vector, corresponding to basic solutions with the same $x$-component. Note also that if a component $\xh$ of a
vertex is fractional, then $\xh < 0$ or $\xh > 1$.

The next theorem characterizes the facets of the simple disjunctive hull.

\begin{theorem}\label{th1} 
The inequality $\bal s \ge 1$ defines a facet of $\convpnd$ if and
only if there exists an objective function of the linear program \cglpq\ of Proposition~\ref{pr2} with $p > 0$ such that all optimal solutions
$(\al,u)$ have $\al = \bal$.
\end{theorem}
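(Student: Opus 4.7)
By Theorem~\ref{th5.1}, the projection of the feasible region of \cglpq\ onto the $\al$-space is exactly the ``polar'' polyhedron $\Lambda := \{\al \in \Rn : \al s \ge 1 \text{ for all } s \in \convpnd\}$; thus on the $\al$-component, \cglpq\ with objective $p\al$ is the linear program $\min\{p\al : \al \in \Lambda\}$. The plan is to deduce both directions from two standard facts: (a)~a point of a polyhedron is the unique minimizer of a linear functional iff it is an extreme point and that functional lies in the relative interior of its normal cone there; (b)~the extreme points of $\Lambda$ are in bijection with the facets of $\convpnd$ cut out by inequalities of the form $\al s \ge 1$.

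\emph{Sufficiency} ($\Leftarrow$): If some $p > 0$ makes $\bal$ the unique optimum of \cglpq, then $\bal$ is the unique minimizer of $p\al$ over $\Lambda$, hence an extreme point of $\Lambda$; by~(b), $\bal s \ge 1$ is facet-defining for $\convpnd$.

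\emph{Necessity} ($\Rightarrow$): Assume $\bal s \ge 1$ defines a facet $F := \{s \in \convpnd : \bal s = 1\}$ of $\convpnd$. I would pick a strictly positive $s^* \in \mbox{relint}(F)$ and set $p := s^*$. Then $p > 0$ and $p\bal = \bal s^* = 1$; for any $\al \in \Lambda$ we have $p\al = \al s^* \ge 1 = p\bal$, so $\bal$ is optimal. For uniqueness, any other optimal $\al \in \Lambda$ would satisfy $\al s^* = 1$, which would make $\al s = 1$ a supporting hyperplane of $\convpnd$ tight at the relative interior point $s^*$ of $F$; but the normal cone to $\convpnd$ at such a point is the one-dimensional ray $\R_+\bal$, forcing $\al = \bal$.

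The main obstacle is producing $s^* \in \mbox{relint}(F)$ with every $s^*_j > 0$. The key observation is that $F \not\subseteq \{s_j = 0\}$ for any $j$: otherwise $s_j \ge 0$ would be a second facet-defining inequality for the same facet $F$, and by the uniqueness (up to positive scaling) of facet-defining inequalities $\bal$ would have to be a positive multiple of $e_j$, giving the contradiction $1 = \bal s = c\cdot 0 = 0$ on $F$. Consequently each set $\mbox{relint}(F) \cap \{s_j > 0\}$ is a nonempty, relatively open subset of $\mbox{relint}(F)$; the intersection over $j = 1, \ldots, n$ is nonempty and yields the desired $s^*$.
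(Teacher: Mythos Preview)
Your argument is correct and follows the same conceptual route as the paper: identify the projection of the \cglpq\ feasible set onto the $\al$-space with the polar $\Lambda$ of $\convpnd$, and use the standard correspondence between facets of a full-dimensional polyhedron and extreme points of its polar, the latter being exactly the points that occur as unique minimizers of some linear objective. The paper's own proof is only a two-sentence outline that defers to Theorem~4.6 of \cite{Balas}; you have in effect supplied the missing details of that reference.

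The one place where you go beyond the paper is the explicit construction of a \emph{strictly positive} $p$. The paper simply asserts that a vertex of the polar is the unique minimizer for some $p>0$ without indicating why positivity can be arranged. Your choice $p=s^*\in\mbox{relint}(F)$ with $s^*>0$, together with the observation that $F\not\subseteq\{s_j=0\}$ (since otherwise the full-dimensional facet $F$ would force $\bal s=1$ and $s_j=0$ to define the same hyperplane, which is impossible as one passes through the origin and the other does not), is a clean way to close that gap. The uniqueness argument via the normal cone at a relative interior point of a facet is also correct, using that $\convpnd$ is full-dimensional and that no constraint $s_j\ge 0$ is active at $s^*$.
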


\noindent {\em Proof outline.} This is a special case of Theorem~4.6
of \cite{Balas}. The inequality $\bal x \ge 1$ defines a facet of
$\convpnd$ if and only if $\bal$ is a vertex of the polar of
$\convpnd$, which is the projection of (\ref{eq3.8}) onto the
$\al$-space. But $\bal$ is a vertex of this polar if and only if
there exists an objective function vector $p > 0$ such that $p \al$
attains its unique minimum at $\bal$.\hfill$\Box$

If the system (\ref{eq3.4}) defining $\PL$ is of full row rank $q$,
then the dimension of $\convpd$ is $n$, since there are $q+n$
variables and $q$ independent equations. The dimension of
$\convpd^{(n)}$ is also $n$, so the facets of $\convpd^{(n)}$ are of
dimension $n-1$.

From a computational standpoint, the most important feature of
\cglpq\ is that the facets of the $n$-dimensional $\convpnd$ can be
generated by solving a smaller CGLP in a subspace of at most $t=2^q$
variables $\sj$, and lifting the resulting inequality into the full
space. The idea of generating cuts in a subspace of the original
higher dimensional cut generating linear program and then lifting
them to the full space goes back to \cite{BalasCeriaCornuejols, BalasPerregaard}, where
lift-and-project cuts were generated from a 2-term disjunction by
working in the subspace of the fractional variables of the LP
solution. Here we are working with a $\twoq$-term disjunction, and
are considering a different subspace, suggested by the structure of
the problem at hand, but the lifting procedure is essentially the
same as the one used in \cite{BalasCeriaCornuejols,BalasCeriaCornuejols2}.

Since our cuts are derived from a disjunction with $2^q$ terms, if
we want to create a subproblem in which all terms are represented,
we need $2^q$ out of the $n$ variables $\al_j$ of our \cglpq.
Furthermore, the $\twoq$ vectors $\rj$ corresponding to these
$\al_j$ have to span the subspace $\R^q$ of the $x$-variables.
Solving the \cglpq\ in this subspace yields $\twoq$ values $\al_j$
and $q \times \twoq$ associated multipliers $u_{ik}$, $i =
1,\ldots,\twoq$, $k=1,\ldots,q$; and these multipliers can then be
used to compute the remaining components of $\al$, given by the expressions (\ref{eq3.11}) and (\ref{eq3.12}). The significance
of this is that the computational cost of generating facets of
$\convpd$ grows only linearly with $n$. Of course this cost grows
exponentially with $q$, but the approach discussed here is being
considered for small $q$.

The choice of the subspace is intimately related to the
question of deciding which facets of the
disjunctive hull are also facets of the integer hull. The best way
to address this question and that of the subspace to be chosen, is
to first interpret the inequalities defining the disjunctive hull as
intersection cuts.

\section{Geometric interpretation: Cuts from the $q$-dimensional parametric cross-polytope}
\label{chapter2_subsection_cuts_from_parametric_crosspolytope}

Consider the $q$-dimensional unit cube centered at
$\left(0,\ldots,0\right)$, $K_q := \{ x \in \R^q
: -\frac{1}{2} \le x_j \le \frac{1}{2}, \ j \in Q\}$. Its polar,
$K^o_q := \{ x \in \R^q : xy \le 1, \ \forall x \in K\}$, is known
to be the $q$-dimensional octahedron or cross-polytope; which, when
scaled so as to circumscribe the unit cube, is the outer polar of
$K_q$:
$$K^*_q = \{ x \in \R^q : |x| \le {\textstyle\frac{1}{2}} q\},
$$
where $|x|=\sum (|x_j|:j=1,\ldots,q\}.$ Equivalently, $|x| \le
\frac{1}{2}q$ can be written as the system
\begin{equation}\label{eq5.1}
\begin{array}{rcrcrcl}
-x_1 &-& \cdots &-& x_q & \le & \frac{1}{2}q\\[6pt]
x_1 &-& \cdots &-& x_q &\le& \frac{1}{2}q\\[6pt]
&&&&&\vdots& \\[6pt]
x_1 &+& \cdots &+& x_q & \le & \frac{1}{2}q
\end{array}
\end{equation}
of $t=\twoq$ inequalities in $q$ variables.

Moving the center of the coordinate system to $(\half,\cdots,\half)$ changes the righthand side coefficient of the
$i$-th inequality in (\ref{eq5.1}) from $\half q$ to a value equal to the sum of positive coefficients
on the lefthand side of the inequality. Indeed, if $\qplus$ and $\qmin$ denotes the number of positive and negative coefficients, then $\frac{1}{2} q + \frac{1}{2} \qplus - \frac{1}{2} \qmin = \qplus$.

Next we introduce the parameters $v_{ik}$, $i=1,\ldots,t=\twoq$,
$k=1,\ldots,q$, to obtain the system
\begin{equation}\label{eq5.2}
\begin{array}{rcccrcl}
-v_{11} x_1 &-& \cdots &-& v_{1q} x_q & \le & 0\\[6pt]
v_{21} x_1 &-& \cdots &-& v_{2q} x_q &\le& v_{21}\\[6pt]
-v_{31} x_1 &+& \cdots &-& v_{3q} x_q &\le& v_{31}\\[6pt]
\vdots~~~ &&&&&\vdots&\\[6pt]
v_{t1} x_1 &+& \cdots &+& v_{tq} x_q &\le& v_{t1} + \ldots + v_{tq}\\[6pt]
\multicolumn{7}{l}{v_{ik} \ge 0, \ i=1,\ldots,t=\twoq, \
k=1,\ldots,q.}
\end{array}
\end{equation}
Note that the constraints of (\ref{eq5.2}) are of the form
$$\sum_{k \in \widetilde{Q}^+_i} v_{ik} \xk - \sum_{k \in \widetilde{Q}^-_i} v_{ik} \xk \le
\sum_{k \in \widetilde{Q}^+_i} v_{ik},
$$
where $\widetilde{Q}^+_i$ and $\widetilde{Q}^-_i$ are the sets of
indices for which the coefficient of $x_k$ is $+ v_{ik}$ and $-
v_{ik}$, respectively. Note also that all inequalities that have the
same number of coefficients with the plus sign have the same
righthand side, equal to the sum of these coefficients.

The system (\ref{eq5.2}) is homogeneous in the parameters $v_{ik}$, so every one of its inequalities can be normalized. Since we are looking for a
connection with the system (\ref{eq3.8}) defining \cglpq, we will
use the normalization given by this system and
Proposition~\ref{pr4}, i.e.
\begin{equation}\label{eq5.3}
\begin{array}{rcccrcl}
f_1v_{11} & + & \cdots & + & f_q v_{1q} & = & 1\\[6pt]
(1-f_1)v_{21} &+& \cdots &+& f_q v_{2q} &=& 1\\[6pt]
\cdots &&&& \cdots\\[6pt]
(1-f_1) v_{t1} &+& \cdots &+& (1-f_q) v_{tq} &=& 1
\end{array}
\end{equation}
Note that these normalization constraints are of the general form
$$\sum_{h \in \widetilde{Q}^+_i} (1-f_k) v_{ik} + \sum_{h \in
\widetilde{Q}^-_i} f_k v_{ik} =1.
$$
Let $\tkv$ denote the parametric cross-polytope defined by
(\ref{eq5.2}) and (\ref{eq5.3}). It is not hard to see that for any fixed set of
$v_{ik}$, (\ref{eq5.2}) defines a convex polyhedron in $x$-space
that contains in its boundary all $x \in \R^q$ such that $\xk \in
\{0,1\}$, $k \in Q$, hence is suitable for generating intersection
cuts. Furthermore, letting $\tkvn$ be the expression for $\tkv$ in
the space of the $s$-variables, obtained by substituting $f+Rs$ for
$x$ into (\ref{eq5.2}), we have

\begin{theorem}\label{th6.1}
For any values of the parameters $v_{ik}$ satisfying (\ref{eq5.2}) and (\ref{eq5.3}),
the intersection cut $\tal s \ge 1$ from $\tkvn$ has coefficients
$\tal_j= \frac{1}{\sj^*}$, where
\begin{equation}\label{eq5.4}
\sj^* = \max \{ \sj : f + \rj \sj \in \kvn\}.
\end{equation}
\end{theorem}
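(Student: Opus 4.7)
The plan is to derive the statement directly from Theorem~\ref{th1.1} applied with $T := \tkv$ and basic point $\bx := f$. What I need to verify is just that $\tkv$ is a closed convex set and that $f$ lies in its interior; once these are in hand, the coefficient formula $\tal_j = 1/\sj^*$ drops out of Theorem~\ref{th1.1} and is exactly the claim (\ref{eq5.4}).

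Closedness and convexity of $\tkv$ are immediate, since (\ref{eq5.2}) presents $\tkv$ as a (bounded) intersection of $\twoq$ closed halfspaces in $\Rq$. For the interior verification I would plug $x=f$ into the $i$-th inequality of (\ref{eq5.2}), written in the general form
$$\sum_{k \in \tqpi} v_{ik} x_k - \sum_{k \in \tqmi} v_{ik} x_k \;\le\; \sum_{k \in \tqpi} v_{ik},$$
and compute its slack. A short rearrangement shows the slack at $x = f$ equals
$$\sum_{k \in \tqpi} v_{ik}(1 - f_k) + \sum_{k \in \tqmi} v_{ik} f_k,$$
which by the normalization (\ref{eq5.3}) is precisely $1 > 0$. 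Since this holds for every $i \in \{1,\ldots,\twoq\}$, all constraints of (\ref{eq5.2}) are strictly satisfied at $x = f$, so $f \in \mbox{\rm int}\,\tkv$. (The remark preceding the theorem, that all of $\{0,1\}^q$ lies on the boundary of $\tkv$, also confirms the remaining hypothesis of Theorem~\ref{th1.1} in the $0$--$1$ setting.)

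With these two facts in hand, Theorem~\ref{th1.1} applies and yields an inequality $\tal s \ge 1$ whose coefficients are $\tal_j = 1/\sj^*$, where $\sj^* = \max\{\sj : f+\rj\sj \in \tkv\}$. Because $\tkvn$ is by construction the pullback of $\tkv$ under the affine substitution $x = f + Rs$ introduced just before the theorem, the quantity $\sj^*$ is at the same time the largest step along the $j$-th coordinate direction in $s$-space, starting at the origin, before exiting the set corresponding to $\tkv$; this is precisely what (\ref{eq5.4}) asserts.

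The only genuinely nontrivial step is the interior verification, and its success rests entirely on the normalization (\ref{eq5.3})---without it the slacks would not collapse to a uniform positive value, and the interior property of $f$ could fail for some $i$. Everything else in the argument is a routine instantiation of the intersection-cut construction of Theorem~\ref{th1.1}, transported between $x$- and $s$-coordinates by the affine map $s \mapsto f + Rs$.
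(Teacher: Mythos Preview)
Your proof is correct and follows exactly the paper's approach, which is simply to invoke Theorem~\ref{th1.1}; you have additionally spelled out the verification that $f$ lies in the interior of $\tkv$ via the normalization (\ref{eq5.3}), which the paper leaves implicit. (One minor quibble: $\tkv$ need not be bounded---e.g.\ the strip configurations in the $q=2$ case---but this is irrelevant since Theorem~\ref{th1.1} only requires closedness and convexity.)
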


\begin{proof}
This is a special case of Theorem~\ref{th1.1}.
\end{proof}

In order to compare the intersection cut $\tal s \ge 1$ with the cut
$\al s \ge 1$ from the $q$-term disjunction (\ref{eq3.7}), we have
to restate (\ref{eq5.4}) in terms of the system of inequalities
defining $\tkvn$. This means that $f+\rj \sj^*$ has to be expressed
as the intersection point of the ray $f+\rj \sj$, $\sj \ge 0$, with
the first facet of $\kvn$ encountered. This yields
\begin{equation}\label{eq5.5}
\sj^* = \min \{ \sj^1,\ldots,\sj^t\},
\end{equation}
where the $\sj^i$ are obtained by substituting $f_k + \sum^n_{h=1}
r_j^k s_h$ for $x_k$, $k=1,\ldots,q$ into the $i$-th inequality of
(\ref{eq5.2}), and setting $s_h=0$ for all $h \ne j$:
$$s^i_j = \max \left\{ \sj : \left(\sum_{k \in \tqpi} v_{ik} r^k_j - \sum_{k
\in \tqmi} v_{ik} r^k_j\right) \sj \le \sum_{k \in \tqpi} v_{ik}
(1-f_k) + \sum_{k \in \tqmi} v_{ik} f_k\right\},
$$
$i=1,\ldots,t=\twoq$.

Clearly, this maximum is bounded whenever the coefficient of $\sj$
is positive, in which case, if we normalize by setting $\sum_{k \in
\tqpi} v_{ik} (1-f_k) + \sum_{k \in \tqmi} v_{ik} f_k =1$, we obtain
\begin{equation}\label{eq5.6}
\sj^i = \left( \sum_{k \in \tqpi} v_{ik} r_j^k - \sum_{k \in \tqmi}
v_{ik} r_j^k \right)^{-1}.
\end{equation}

Comparing (\ref{eq5.5}) and (\ref{eq5.6}) to the expressions
(\ref{eq3.11}) and (\ref{eq3.12}) for the coefficient $\al_j$ of the
lift-and-project cut $\al s \ge 1$ of Proposition~\ref{pr2},
we find that setting $v_{ik} = u_{ik}$ for all $i,k$, as well as $\tqpi = \Qmi$
and $\tqmi = \Qpi$, we obtain $\tal_j = \al_j$.

This proves

\begin{corollary}\label{co6.2}
The intersection cut $\tal s \ge 1$ from the parametric octahedron
$\tkvn$ is the same as the lift-and-project cut $\al s \ge 1$
corresponding to the \cglpq\ solution $(\al,u)$, with
$v_{ik}=u_{ik}$, $i=1,\ldots,t$, $k=1,\ldots,q$.
\end{corollary}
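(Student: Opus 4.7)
The plan is to chain together Theorem~\ref{th6.1} with the formulas (\ref{eq5.5})--(\ref{eq5.6}) on the geometric side, and the expressions (\ref{eq3.11})--(\ref{eq3.12}) on the disjunctive side, and observe that they match term-by-term once the parameters $v_{ik}$ are identified with the multipliers $u_{ik}$ and the sign partitions are correctly paired up. The bulk of the work is already done in the preceding paragraphs; the corollary is essentially a bookkeeping statement.

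Concretely, I would first invoke Theorem~\ref{th6.1} to write $\tal_j = 1/\sj^*$ with $\sj^*$ given by (\ref{eq5.4}). Because $\tkvn$ is a polyhedron, the maximum $\sj$ for which $f + \rj \sj \in \tkvn$ is attained at the first facet hit by the ray from $f$ in direction $\rj$; this is exactly the content of (\ref{eq5.5})--(\ref{eq5.6}), which express $\sj^*$ as $\min_i \sj^i$ with $\sj^i$ determined by the $i$-th inequality of (\ref{eq5.2}). Consequently
\[
\tal_j \;=\; \max_{i=1,\ldots,t}\; \sum_{k \in \tqpi} v_{ik} r^k_j \;-\; \sum_{k \in \tqmi} v_{ik} r^k_j,
\]
under the normalization (\ref{eq5.3}), which is precisely the normalization used (after Proposition~\ref{pr4}) for \cglpq.

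Next I would recall from Proposition~\ref{pr2} that the optimal \cglpq\ solution satisfies $\al_j = \max_i \al^i_j$ with $\al^i_j$ given by (\ref{eq3.12}): a sum over $\Qpi$ with a minus sign and a sum over $\Qmi$ with a plus sign, in the variables $u_{ik}$. Setting $v_{ik} = u_{ik}$, the two maxima become syntactically identical once the sign partitions are aligned: the index set $\tqpi$ of (\ref{eq5.2}) collects the coordinates whose coefficient in the $i$-th facet of $\tkv$ is positive, while $\Qpi$ of (\ref{eq3.8}$'$) collects the coordinates whose multiplier enters the disjunctive row with coefficient $+r^k$. Because (\ref{eq3.7}) writes $x_k \le 0$ as $-r^k s \ge f_k$ (giving a $+r^k u_{ik}$ contribution to $\al$) whereas in (\ref{eq5.2}) $x_k \le 0$ appears with a negative coefficient on $v_{ik}$, the correct identification is $\tqpi = \Qmi$ and $\tqmi = \Qpi$; the normalization (\ref{eq5.3}) then coincides with the nonhomogeneous equations of (\ref{eq3.8}$'$) after the same swap. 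Under this identification, term-by-term comparison gives $\tal_j = \al_j$ for every $j$, proving the corollary.

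The main obstacle, and really the only thing to watch, is this sign/partition bookkeeping: the $+/-$ labeling in the cross-polytope formulation is opposite to the $+/-$ labeling in the disjunctive formulation because reversing the inequality $d^i x \le d^i_0$ to its weak complement $d^i x \ge d^i_0$ flips the sign of every $r^k$. Once this is spelled out carefully, no further computation is needed.
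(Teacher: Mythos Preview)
Your proposal is correct and follows essentially the same route as the paper: the paper proves the corollary in the paragraph immediately preceding it by computing $\sj^*$ via (\ref{eq5.5})--(\ref{eq5.6}), comparing with (\ref{eq3.11})--(\ref{eq3.12}), and noting that the identification $v_{ik}=u_{ik}$ together with the partition swap $\tqpi=\Qmi$, $\tqmi=\Qpi$ makes the two expressions coincide. Your added explanation of \emph{why} the partitions swap (the passage from $d^i x \le d^i_0$ to its weak complement flips the sign of each $r^k$) is a helpful gloss that the paper leaves implicit.
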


\section{Facets of the disjunctive hull and the integer hull}
\label{chapter2_subsection_facets_dh_ih}

Consider again the disjunctive relaxation of $\PI$
$$\PD = \{ (x,s) \in \Rq \times \Rn : x = f + Rs, \ s \ge 0, \ x_i
\le 0 \ \vee \ x_i \ge 1, \ i \in Q \}
$$
introduced at the beginning of section \ref{sec:integer_and_disjunctive_hulls}, where $x,f \in \Rq$, $R \in \R^{q \times
n}$, and $Q := \{ 1,\ldots,q\}$. For $i=1,\ldots,t=\twoq$, let $p^i$
be the vertex of $K_q$, the $q$-dimensional unit cube, defined by
$p^i_k =0$, $i \in \Qpi$, $p^i_k = 1$, $i \in \Qmi$.

Next we give a sufficient condition for an inequality $\al s \ge 1$
valid for $\PD$ to define a facet of $\convpi$, which for small $q$ leads to an efficient procedure for
generating inequalities that are facet defining for $\convpi$.

The dimension of $\PnI$ being $n \ge \twoq$, $\al s \ge 1$ defines a
facet of $\convpnI$ if there exists a subspace $\R^{\twoq}$ of $\Rn$
such that the restriction of $\al s \ge 1$ to this subspace defines
a facet of $\convpi^{(\twoq)}$. If this is the case, then the
inequality in question can be lifted to the full space to yield a
facet of $\convpnI$ by using the $u$-components of the solution
$(\al,u)$ to the CGLP in the subspace to compute the missing
coefficients $\al_j$.

\begin{theorem}\label{th7.1} 
Let $\al s \ge 1$ be a valid inequality for $\PD$ corresponding to a
basic solution $(\al,u)$ of \cglpq, and let $p^i$,
$i=1,\dots,\twoq$, be the vertices of $K_q$. Suppose for each $p^i$, $i=1,\ldots,2^q$, there exists a subset $J_i \subset J$ containing the indices of $q$ linearly independent rays $r^{j_1},\ldots,r^{j_q}$, and a vector $\lambda \in \Rq_+$, satisfying
\begin{equation}\label{eq6.1}
p^i - f = \sum^{j_q}_{j=j_1} {\textstyle\frac{1}{\alpha_j}} \rj
\lambda_j, \quad \sum^{j_q}_{j=j_1} \lambda_j = 1.
\end{equation}
Then the inequality $\sum_{\jinJ} \al_j \sj \ge 1$ defines a facet
of $\convpi^{(|J|)}$, and its lifting based on the $u$-components of
the solution $(\al,u)$ defines a facet of $\convpnI$.
\end{theorem}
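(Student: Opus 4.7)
The plan is to verify validity, construct enough integer feasible points on the cut hyperplane, and then invoke the standard lift-and-project lifting of \cite{BalasCeriaCornuejols,BalasPerregaard}. Validity is immediate: since $(\al,u)$ solves \cglpq, the inequality $\al s \ge 1$ is valid for $\convpnd$, hence also for $\convpnI \subseteq \convpnd$ and for its restriction to $\R^{|J|}$.

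The heart of the proof is producing $2^q$ integer feasible points of $P_I^{(|J|)}$ lying on the hyperplane $\al s = 1$. For each vertex $p^i$ of $K_q$ ($i=1,\ldots,2^q$), define
\[
\bar s^i_j := \begin{cases} \lambda_j/\al_j, & j \in J_i,\\ 0, & j \in J\setminus J_i,\end{cases}
\]
using the $\lambda$ supplied by (\ref{eq6.1}). By Proposition~\ref{pr3} the $\al_j$'s are nonnegative, and the construction implicitly requires $\al_j>0$ for the $j\in J_i$ actually used, so $\bar s^i \ge 0$. Substituting into $x = f + Rs$ yields $x = f + (p^i - f) = p^i \in \{0,1\}^q \subset \Z^q$, so $\bar s^i \in P_I^{(|J|)}$; and $\al \bar s^i = \sum_{j \in J_i} \lambda_j = 1$, so each $\bar s^i$ lies on the cut.

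To conclude that $\al s \ge 1$ is facet-defining for $\conv\,P_I^{(|J|)}$, I would first invoke Theorem~\ref{th1}: the basicness of $(\al,u)$ in \cglpq\ yields some $p>0$ for which $\al s \ge 1$ already defines a facet of $\conv\,P_D^{(|J|)}$. It then suffices to exhibit $|J|-1$ affinely independent directions on the cut face lying in $P_I^{(|J|)}$. The $\bar s^i$'s are distinct because their $x$-projections are, and each is a vertex of $\conv\,P_I^{(|J|)}$: linear independence of $r^{j_1},\ldots,r^{j_q}$ makes $\bar s^i$ the unique nonnegative solution of $f+Rs=p^i$ with support contained in $J_i$. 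The delicate point is the affine-independence argument; I would approach it by counting the tight constraints of \cglpq\ at the basic solution $(\al,u)$ and matching them bijectively with the $\bar s^i$'s, using the distinct support patterns $J_i$ and the nonsingularity of each $R_{J_i}$ to preclude spurious linear dependencies among the $\bar s^i$, adjoining recession directions of $P_I^{(|J|)}$ lying in the cut face if $|J| > 2^q$.

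Once the subspace facet is in hand, the second conclusion follows from the standard lift-and-project lifting: the coefficients $\al_j$ for $j\notin J$ computed from the multipliers $u_{ik}$ via (\ref{eq3.11})--(\ref{eq3.12}) define the minimal lifting of the subspace facet to $\Rn$, yielding a facet of $\convpnI$ by the arguments of \cite{BalasCeriaCornuejols,BalasPerregaard}. The main obstacle is thus the affine-independence step above; everything else is either routine verification or a direct appeal to known lift-and-project machinery.
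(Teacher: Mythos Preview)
Your construction of the points $\bar s^i$ and the verification that each is integer feasible and lies on $\al s=1$ is exactly the paper's argument, written out more explicitly: the paper phrases it as expressing each $p^i$ as a convex combination of the intersection points $f+\tfrac{1}{\al_j}r^j$ (each of which satisfies $\al s=1$), and then simply asserts that $2^q$ integer points on the hyperplane give a facet of $\convpi^{(|J|)}$, with the lifting via the $u$-multipliers handling the extension to $\convpnI$.

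Where you depart from the paper is in the affine-independence discussion. The paper does not address it at all; it treats ``$2^q$ integer points on the cut hyperplane $\Rightarrow$ facet'' as immediate (tacitly relying on the setup $|J|=2^q$ from the paragraph preceding the theorem). Your instinct to flag this as the delicate step is sound, but your proposed route through Theorem~\ref{th1} introduces its own gap: mere basicness of $(\al,u)$ in \cglpq\ does not guarantee that $\al$ is a vertex of the projection onto $\al$-space, i.e.\ that some $p>0$ makes $\al$ the \emph{unique} minimizer, which is exactly what Theorem~\ref{th1} requires. So that detour assumes more than the hypothesis supplies. Note also that distinctness of the $x$-projections $p^i$ only yields affine independence of at most $q+1$ of the $\bar s^i$ (since the $p^i$ live in $\R^q$), not all $2^q$; so the differing supports $J_i$ and the linear independence within each $J_i$ really would have to carry the argument, and your bijection-with-tight-constraints sketch would need to be made concrete. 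In short: your core proof matches the paper's, you are more scrupulous than the paper about a point it glosses over, but your proposed patch for that point is not yet watertight.
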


\begin{proof}
Suppose the subset of $\twoq$ rays indexed by $J$ satisfies the
requirements of the Theorem. Then for every $i=1,\ldots,\twoq$, the
vertex $p^i$ of $K^q$ satisfies
$$p^i = \sum^{j_q}_{j=j_1} (f - {\textstyle\frac{1}{\al_j}} \rj)
\lambda_j, \quad \sum^{j_q}_{j=j_1} \lambda_j =1
$$
for some $\lambda_j \ge 0$, $j=j_1,\ldots,j_q$, i.e. $p^i$ can be
expressed as a convex combination of the $q$ points
$f+\frac{1}{\al_j}\rj$, $j=j_1,\ldots,j_q$. But $f + \frac{1}{\al_j}
\rj = f+ \rj \sj^*$ is the intersection point of the ray $f+\rj\sj$
with bd\,$\widetilde{K}^*_q$, hence each of these points satisfies
$\al s=1$ and consequently so does $p^i$. Since $\sum\limits_{\jinJ}
\al_j \sj \ge 1$ is satisfied at equality by $\twoq$ integer points
of $\convpi^{(|J|)}$, it defines a facet of the latter. Furthermore,
lifting the remaining coefficients $\al_j$ of the inequality by
using the $u$-components of $(\al,u)$ yields a facet defining
inequality for $\convpnI$.
\end{proof}

The sufficient condition of Theorem~\ref{th7.1} is not necessary.
There are two kinds of situations not satisfying the above
condition, in which a valid inequality $\al s \ge 1$ for $\PD$ may
define a facet of $\convpi$. The first one involves an inequality
$\al s \ge 1$ such that although (\ref{eq6.1}) is not satisfied for all
$\twoq$ vertices $p^i$ of $K^q$, nevertheless $\convpd$ has $\twoq$ vertices whose
$x$-components $p^i$ satisfy (\ref{eq6.1}), i.e. $\convpd$ has
multiple vertices with the same $x$-component. The second situation
involves facet defining split cuts.

\section{The two-row case}
\label{chapter2_section_case_2_dimensions}

We now restrict our attention to the case $q=2$, i.e. we consider two rows from a simplex tableau of a MIP problem with the variables $x_1, x_2$ and $s_j,j\in J$:
\begin{equation}
\label{2rowstableau}
\begin{array}{llllll}
P_L = \{ (x,s) \in \mathbb{R}^{2+|J|} : \ & x_1 & = f_1 + \sum_{j\in J} r_j^1 s_j\\[6pt]
&x_2 & = f_2 + \sum_{j\in J} r_j^2 s_j\\[6pt]
&\multicolumn{3}{l}{s_j  \geq 0 \quad j\in J\ \}.}
\end{array}
\end{equation}
where $x_1, x_2$ are basic variables required to be integers and $s_j,j\in J$ are non-basic. This is the case studied by Anderson, Louveaux, Weismantel and Wolsey \cite{ALWW}. Let $P_I = \{ (x,s) \in \mathbb{Z}^{2} \times \mathbb{R}^{|J|} : (x,s) \in P_L\}$, and $0<f_1, f_2 < 1$. The column vectors $r_j,j\in J$, represent the extreme rays of the cone in $\R^{|J|}$ with apex at $(f_1,f_2)$.

We will say that a ray $r_j$ in (\ref{2rowstableau}) hits an orthant-cone $Q_i,i\in\{1,\dots,4\}$
if there exists $\lambda_0 > 0$ such that $f + \lambda r_j \in Q_i$ for all $\lambda \geq \lambda_0$.

For the case of 2 rows the disjunction (\ref{eq3.7}) becomes
\begin{equation}
\label{nonbasicdisjunction}
\begin{pmatrix}
-r^1 s \geq f_1\\
-r^2 s \geq f_2\\
\end{pmatrix}
\vee
\begin{pmatrix}
r^1 s \geq 1 - f_1\\
-r^2 s \geq f_2\\
\end{pmatrix}
\vee
\begin{pmatrix}
r^1 s \geq 1 - f_1\\
r^2 s \geq 1 - f_2\\
\end{pmatrix}\\
\vee
\begin{pmatrix}
-r^1 s \geq f_1\\
r^2 s \geq 1 - f_2\\
\end{pmatrix}
\end{equation}
with $s\geq 0$, and the system (\ref{eq3.8}) of Theorem~\ref{th5.1} becomes
\begin{equation}
\label{cglpmip_normalized}
\begin{array}{lllllll}
\alpha & +r^1 v_1 & + r^2 w_1 &             \geq 0\\
\alpha & -r^1 v_2 & + r^2 w_2 &             \geq 0\\
\alpha & -r^1 v_3 &  - r^2 w_3 &            \geq 0\\
\alpha & +r^1 v_4 &  - r^2 w_4 &            \geq 0\\
 & +f_1 v_1 &  +f_2 w_1 &            = 1\\
 & +(1-f_1) v_2 &  +f_2 w_2 &        = 1\\
 & +(1-f_1) v_3 &  +(1-f_2) w_3 &    = 1\\
 & +f_1 v_4  & +(1-f_2) w_4 &        = 1\\
\multicolumn{3}{l}{v_i,w_i \geq 0\quad i\in \{1\dots 4\}.}
\end{array}
\end{equation}
where $v_i, w_i$, $i = 1,\ldots,4$ stand for $u_{i1}, u_{i2}$, $i = 1,\ldots,t=2^q$ (since $q=2$, $t = 2^q = 4$).

By Proposition \ref{pr2} the cuts generated by the CGLP with constraint set (\ref{cglpmip_normalized}) and objective function $\min p\al$ for some $p > 0$ have the form $\alpha s \geq 1$, where
\begin{equation*}
\alpha_j = \max \{ \alpha_j^1,\alpha_j^2,\alpha_j^3,\alpha_j^4 \}
\end{equation*}
with
\begin{equation}
\label{alphas_expression}
\begin{array}{lll}
\alpha_j^1 & = -r^1_j v_1 & - r^2_j w_1\\
\alpha_j^2 & = +r^1_j v_2 & - r^2_j w_2\\
\alpha_j^3 & = +r^1_j v_3 &  + r^2_j w_3\\
\alpha_j^4 & = -r^1_j v_4 &  + r^2_j w_4.
\end{array}
\end{equation}

As discussed in Section~\ref{chapter2_subsection_cuts_from_parametric_crosspolytope}, a cut produced by the CGLP can be viewed as an intersection cut derived from a parametric cross-polytope or octahedron. For given $v,w$, we call the polyhedron
\begin{equation*}
\begin{array}{lll}
P_{\text{octa}}(v,w) = \{ (x_1,x_2) \in \mathbb{R}^2 : & - v_1 x_1 - w_1 x_2 \leq 0 \ ; & \\
    & + v_2 x_1 - w_2 x_2 \leq v_2 \ ;        & \\
    & + v_3 x_1 + w_3 x_2 \leq v_3 + w_3 \ ; & \\
    & - v_4 x_1 + w_4 x_2 \leq w_4 	  & \}\\
\end{array}
\end{equation*}
the {\em $(v,w)$-parametric octahedron}.

If $v_i=0$ or $w_i=0$ for some $i\in \{1,\dots, 4\}$ the $i$-th facet of $P_{\text{octa}}$ is parallel to one of the coordinate axes. If $v_i,w_i >0$ then the $i$-th facet of $P_{\text{octa}}$ is \textit{tilted} (note that since we use the normalization $\beta = 1$, $v_i$ and $w_i$ cannot both be 0). Varying the parameters $v,w$, the $(v,w)$-parametric octahedron produces different configurations according to the non-zero components of $v,w$. Depending on the values taken by the parameters, $P_{\text{octa}}(v,w)$ may be a quadrilateral (i.e. a full-fledged octahedron in $\R^2$), a triangle, or an infinite strip. In the rest of the section we refer to these configurations using the short reference indicated in parenthesis. It can easily be verified that the value-configurations of the parameters $v_i,w_i$ which give rise to maximal convex sets are the following:

\begin{itemize}
\item ($S$) If exactly 4 components of $(v,w)$ are positive, \pocta\ is the vertical strip $\{x\in\mathbb{R}^2:0\leq x_1\leq 1\}$ if $v_i > 0, i=1,\dots,4$;
or the horizontal strip $\{x \in \R^2: 0 \le x_2 \le 1\}$ if $w_i > 0, i=1,\dots,4$ (see figure \ref{parocta_vstrip}, \ref{parocta_hstrip}).

\item ($T_A$) If exactly 5 components of $(v,w)$ are positive, \pocta\ is a triangle with 1 tilted face (type A) (by ``tilted'' we mean a face that is not parallel to any of the two axes). Figure \ref{parocta_5nz}  illustrates the case with $v_1,w_2,v_3,w_3,v_4>0;w_1,v_2,w_4=0$. When in addition $v_i=w_i$ for some $i\in\{1,\dots,4\}$ \pocta\ becomes a triangle with vertices $(0,0);(2,0);(0,2)$ or one of the other three configurations symmetric to this one. This corresponds to what is called a triangle of type 1 in \cite{DeyWolsey}. In the general case $T_A$ corresponds to a triangle of type 2 in \cite{DeyWolsey}.

\item ($T_B$) If exactly 6 components of $(v,w)$ are positive, \pocta\ is a triangle with 2 tilted faces (type B). Figure \ref{parocta_6nz}  illustrates the case with $v_1,w_1,v_2,w_2,w_3,w_4>0; v_3,v_4=0$. This configuration corresponds to a triangle of type 2 in \cite{DeyWolsey}.

\item ($Q$) If all 8 components of $(v,w)$ are positive, \pocta\ is a quadrilateral. See Figure \ref{parocta_8nz}.
\end{itemize}

The case with 7 components of $(v,w)$ positive does not correspond to a maximal parametric octahedron, therefore we do not need to consider it. Suppose all the components are positive except for $v_1$ which is 0. The facet of $P_{\text{octa}}$ corresponding to $(0,0)$ is horizontal and goes through the point $(1,0)$. Is not hard to see that setting $v_2=0$ we enlarge the set defined by the parametric octahedron.

\begin{figure}[htbp]
\centering
\begin{tabular}{ccc}
\subfigure[4 non-zeros - vertical strip]{
\includegraphics[width=50mm]{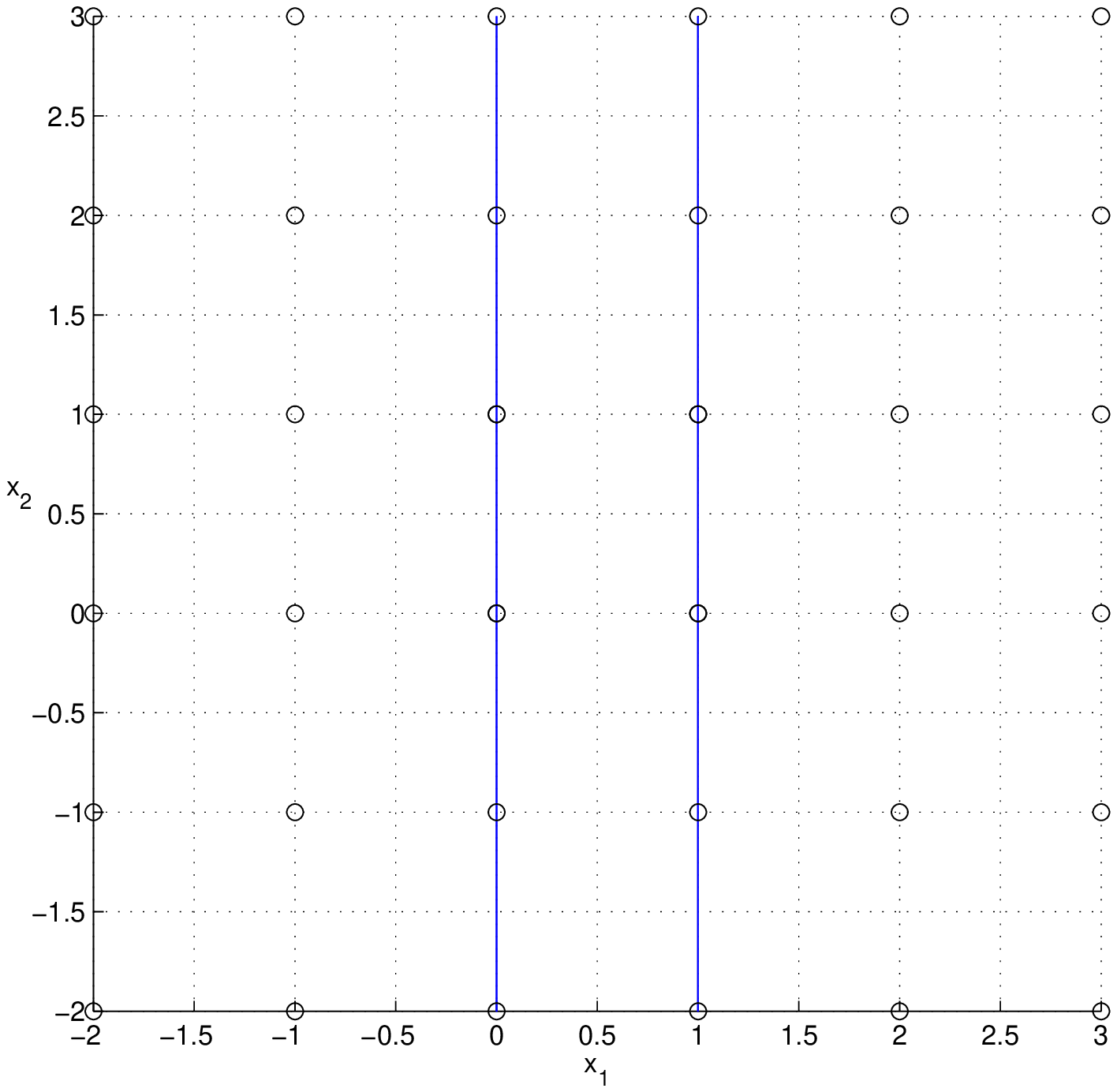}
\label{parocta_vstrip}
}
\subfigure[4 non-zeros - horizontal strip]{
\includegraphics[width=50mm]{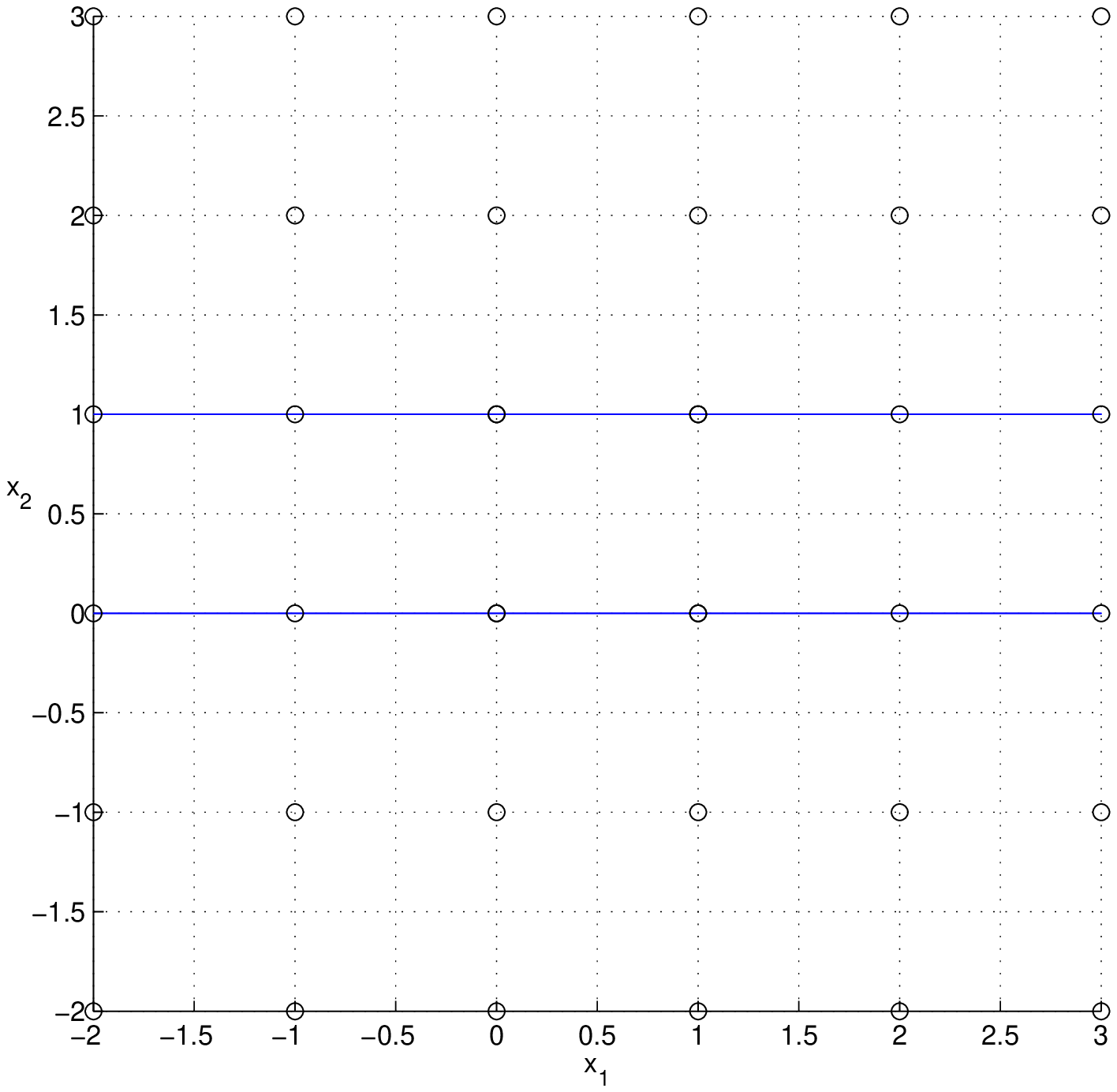}
\label{parocta_hstrip}
}
\subfigure[5 non-zeros - triangle of type A]{
\includegraphics[width=50mm]{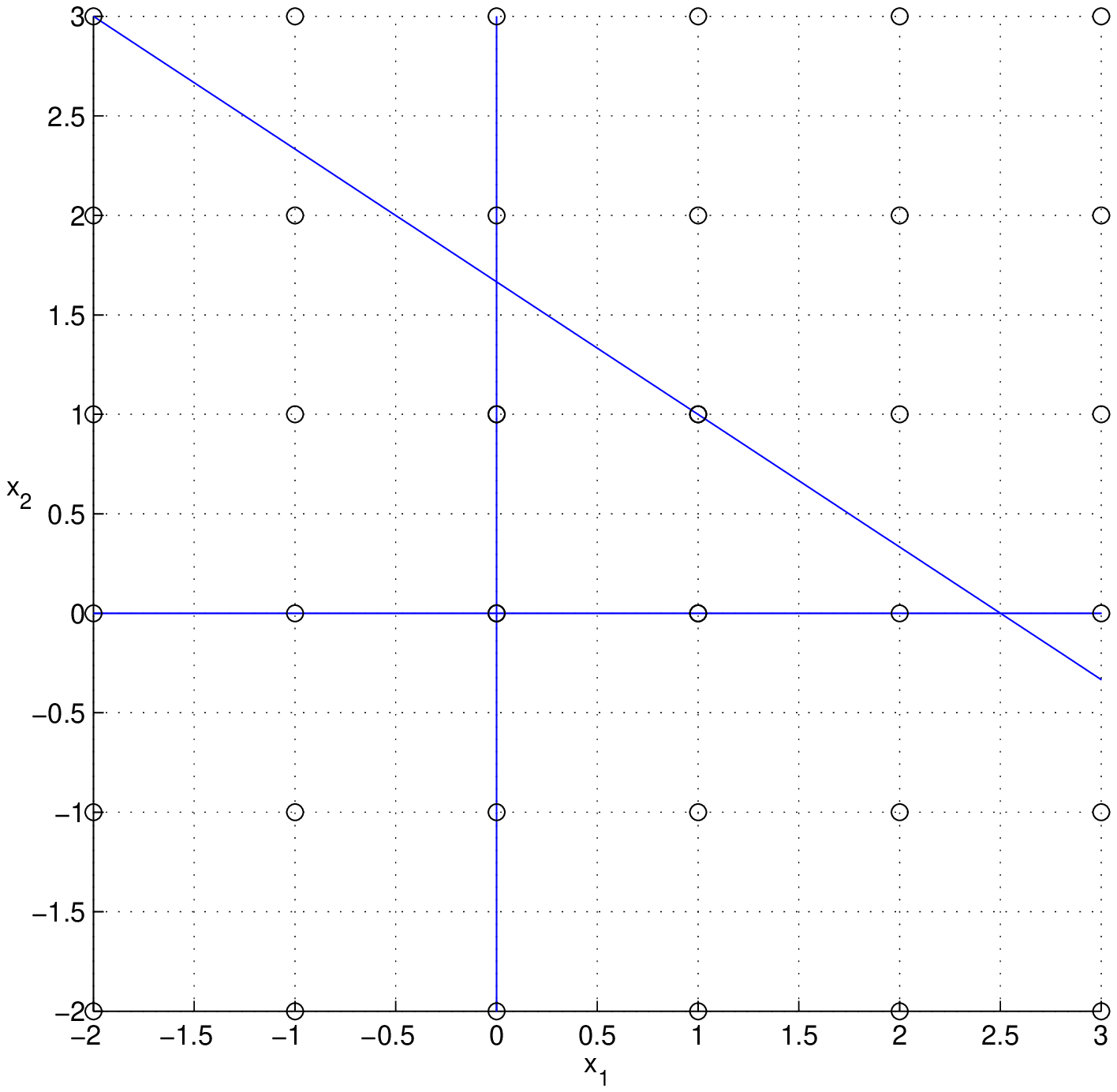}
\label{parocta_5nz}
}\\
\subfigure[6 non-zeros - triangle of type B]{
\includegraphics[width=50mm]{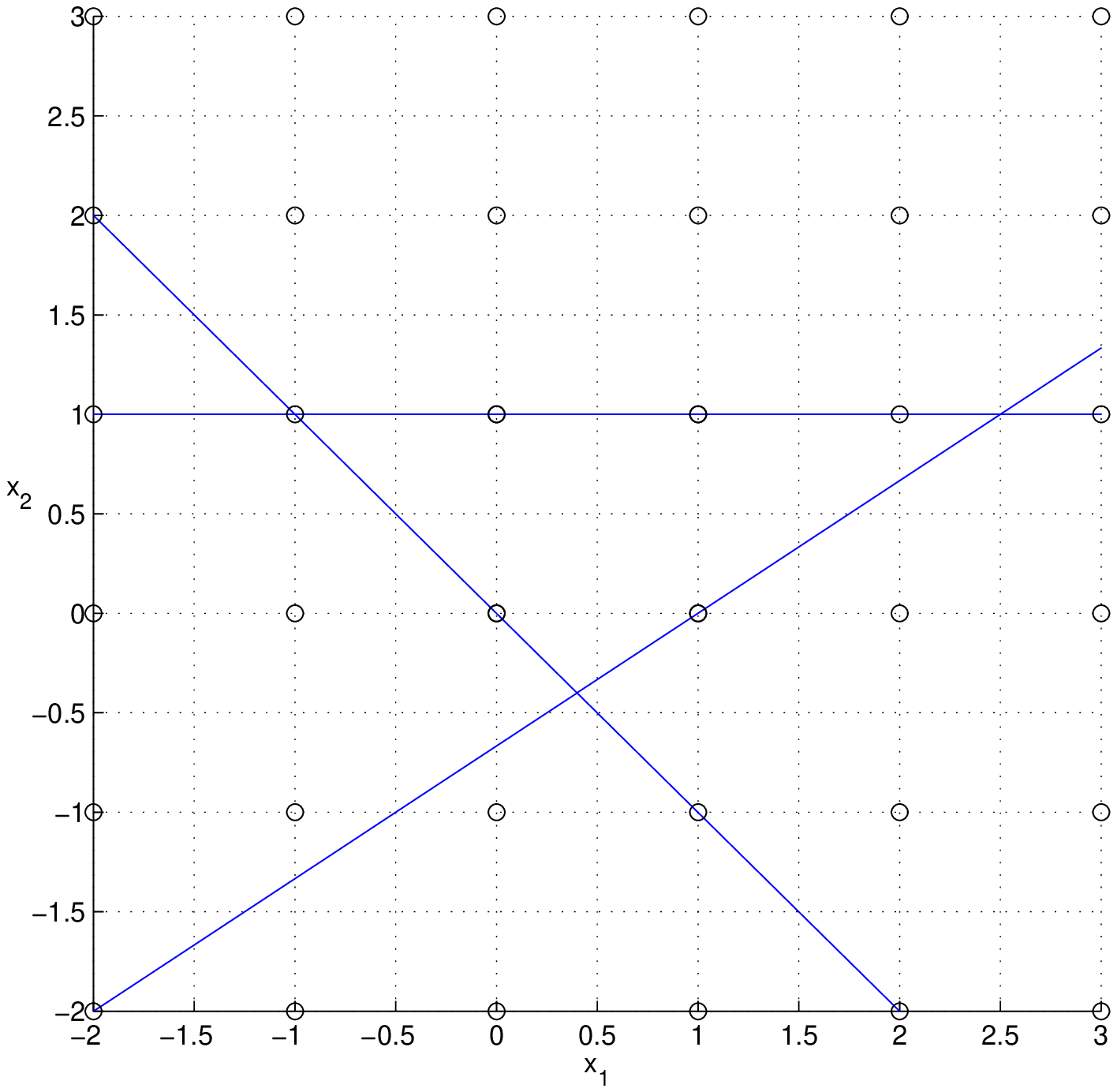}
\label{parocta_6nz}
}
\subfigure[8 non-zeros - quadrilateral]{
\includegraphics[width=50mm]{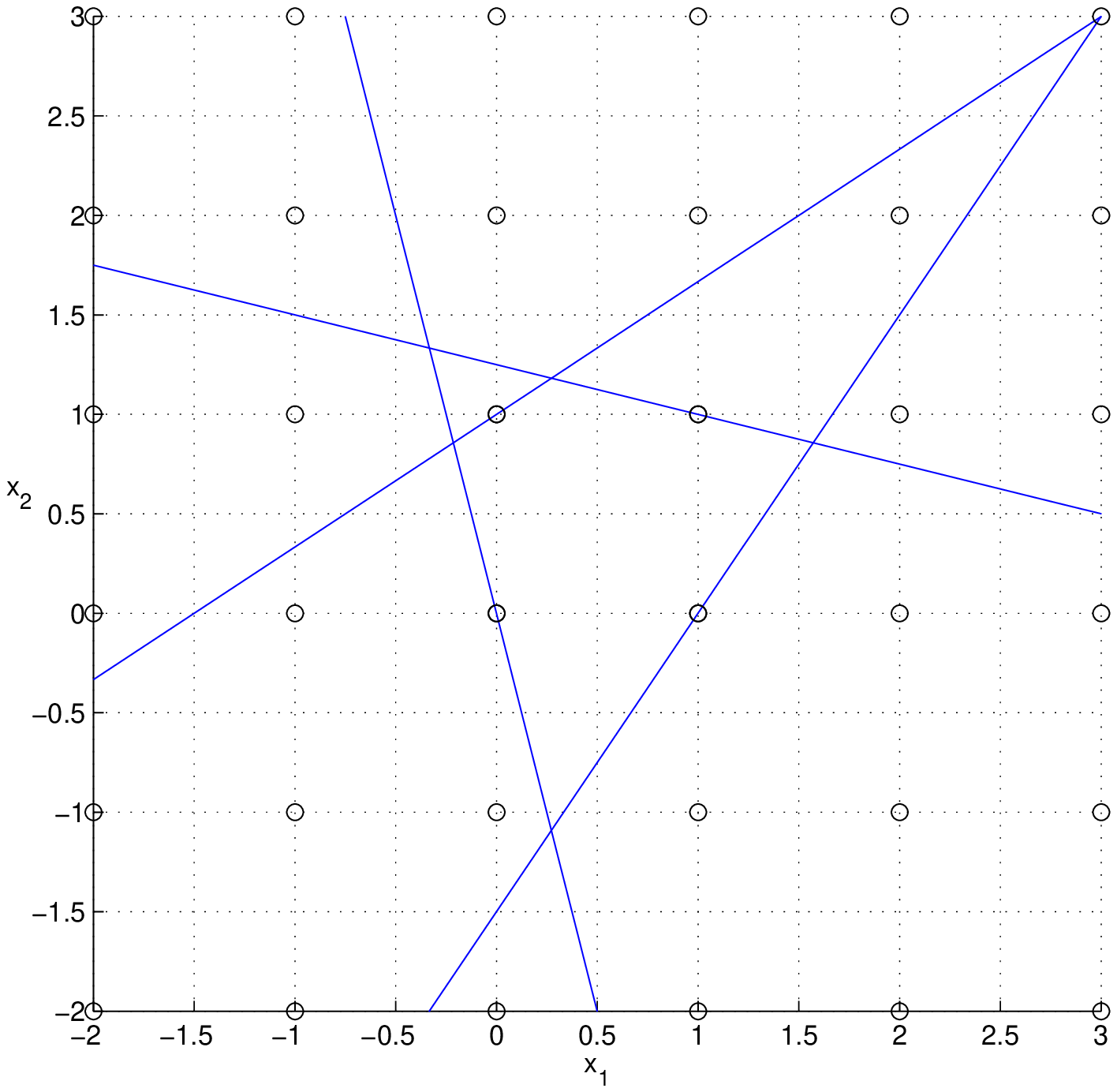}
\label{parocta_8nz}
}
\end{tabular}
\label{parocta_configurations}
\caption{Configurations of the parametric octahedron for the MIP case}
\end{figure}

For a cut $\sum_{j\in J} \alpha_j s_j \geq 1$  Andersen et al. \cite{ALWW} introduce the set
\begin{equation}
\label{Lalphadefinition}
 L_\alpha=\left\{ x \in \mathbb{R}^2  : (x,s) \in P_L \wedge \sum_{j\in J} \alpha_j s_j \leq 1 \right\}.
\end{equation}
Clearly, $L_\alpha \subseteq P_{\text{octa}}(v,w)$, and the inclusion is often strict.

\noindent{\bf Example.}

In \cite{ALWW}, Andersen et al. considered the two rows instance
\begin{equation}
\begin{array}{lllllllll}
x_1 & = \frac{1}{4} & + 2  s_1 & + 1  s_2 & - 3  s_3 &		& + 1  s_5\\[6pt]
x_2 & = \frac{1}{2} & + 1 s_1  & + 1  s_2 & + 2  s_3 & - 1  s_4 & - 2  s_5,\\[6pt]
\multicolumn{9}{c}{x_1, x_2 \in \Z, \quad s \ge 0}
\end{array}
\label{ALWW_example}
\end{equation}
We present the complete description of the disjunctive hull for (\ref{ALWW_example}). In order to do so we generated the CGLP of (\ref{ALWW_example}) using the normalization constraint $\beta = 1$ and we considered all feasible bases. The CGLP produces 5 different facets. For each of these we show the configuration of the parametric octahedron that yields the corresponding cut in terms of the $v,w$ variables:
\begin{enumerate}
 \item Cut ($T_B$): $2 s_1 + 2 s_2 + 4 s_3 + s_4 + \frac{12}{7} s_5 \geq 1$\\
            $v_1 = 2 ;\ v_2 = \frac{8}{7} ;\ v_3 = 0;\ v_4 = 0$\\
            $w_1 = 1;\ w_2 = \frac{2}{7} ;\ w_3 = 2;\ w_4 = 2$
 \item Cut ($T_B$): $\frac{8}{3} s_1 + \frac{4}{3} s_2 + \frac{44}{9} s_3 + \frac{8}{9} s_4 + \frac{4}{3} s_5 \geq 1$\\
            $v_1 = \frac{20}{9} ;\ v_2 = \frac{4}{3}  ;\ v_3 = \frac{4}{3} ;\ v_4 = \frac{4}{9} $\\
            $w_1 = \frac{8}{9}  ;\ w_2 =  0;\ w_3 = 0  ;\ w_4 = \frac{16}{9} $
 \item Cut ($T_A$): $\frac{8}{3} s_1 + 2 s_2 + 4 s_3 + s_4 + \frac{4}{3} s_5 \geq 1$ \\
            $v_1 = 2 ;\ v_2 = \frac{4}{3}  ;\ v_3 = 0 ;\ v_4 = 0 $\\
            $w_1 = 1 ;\ w_2 = 0 ;\ w_3 = 2 ;\ w_4 = 2 $
 \item Cut ($S$): $\frac{8}{3} s_1 + \frac{4}{3} s_2 + 12 s_3  + \frac{4}{3} s_5 \geq 1$ \\
            $v_1 = 4 ;\ v_2 = \frac{4}{3}  ;\ v_3 = \frac{4}{3}  ;\ v_4 = 4  $\\
            $w_1 = 0 ;\ w_2 = 0 ;\ w_3 = 0  ;\ w_4 = 0 $
 \item Cut ($T_B$): $2 s_1 + 2 s_2 + \frac{68}{7} s_3 + \frac{2}{7} s_4 + \frac{12}{7} s_5 \geq 1$\\
            $v_1 = \frac{24}{7}  ;\ v_2 = \frac{8}{7} ;\ v_3 = 0 ;\ v_4 = 0$\\
            $w_1 = \frac{2}{7} ;\ w_2 = \frac{2}{7} ;\ w_3 = 2 ;\ w_4 = 2$
\end{enumerate}

Of the 5 facets of $P_{D}$, 3 are facets for $P_{I}$: cuts 1, 2 and 4. Note that cut 4 is a split cut and can be derived using only the tableau row corresponding to the variable $x_2$. Cut 3 and 5 are facets of $P_D$ by Theorem~\ref{th1}.

The condition given in Theorem~\ref{th7.1} for an inequality $\al x \ge 1$, facet defining for the disjunctive hull, to also define a facet of the integer hull specializes for the case $q=2$ to the following. For each of the four vertices $p^i$ of $K$, $p^i$ must lie on the line segment between two intersection points of rays $r^j$ with the boundary of \pocta. As discussed in Section \ref{chapter2_subsection_cuts_from_parametric_crosspolytope}, the inequalities $\al x \ge 1$ can be generated in a subspace of $\le 2^q = 4$ variables, and then lifted into the full space by using the multipliers $(v_i,w_i)$, $i = 1,\ldots,4$. In \cite{Qualizza_dissertation} two algorithms were implemented for generating facets of the integer hull from \pocta, one for the case of a quadrilateral, the other for the case of triangles, both of them linear in $|J|$, the number of rays.

Recently Dash et al. \cite{DashDeyGunluk} have generalized the approach of \cite{BalasDH1, BalasDH2}, by considering more general 4-term disjunctions that give rise to what they call cross cuts and crooked cross cuts. They relate the closures of their cuts with the split closure and show, among others, that any 2 dimensional lattice free cut can be obtained as a crooked cross cut.

\section{Cut Strengthening}
\label{chapter2_subsection_strengthening}

Given a facet $\alpha s \geq 1$ of the disjunctive hull, if some non-basic variable $s_j$ is required to be integral in the original problem formulation, then the cut can be strengthened. Let $J_1$ be the index set of the integer-constrained variables $\sj$, and let $J_2 = J \sm J_1$.

\begin{lemma}\label{lem2.3.5}
If the disjunction
\begin{equation}
\label{nonbasicdisjunction_integralnonbasics}
\begin{pmatrix}
-r^1 s \geq f_1\\[6pt]
-r^2 s \geq f_2\\
\end{pmatrix}
\vee
\begin{pmatrix}
r^1 s \geq 1 - f_1\\[6pt]
-r^2 s \geq f_2~~~~~~~\\
\end{pmatrix}
\vee
\begin{pmatrix}
r^1 s \geq 1 - f_1\\[6pt]
r^2 s \geq 1 - f_2\\
\end{pmatrix}\\
\vee
\begin{pmatrix}
-r^1 s \geq f_1~~~~~~~\\[6pt]
r^2 s \geq 1 - f_2\\
\end{pmatrix}
\end{equation}
where $s \ge 0$ and $\sj \in \Z$, $j \in J_1 \subseteq J$, is valid for $\PI$, then so is the disjunction obtained from (\ref{nonbasicdisjunction_integralnonbasics}) by replacing some or all $r^i_j$, $i = 1, 2$, $j \in J_1$, with $r^i_j - m^i_j$, for any $m^i_j \in \Z$, $i = 1,2$, $j \in J_1$.
\end{lemma}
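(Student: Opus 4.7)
The plan is to exploit the fact that integer translations by the integer non-basic variables preserve integrality of the basic variables. First I would rewrite the two rows of each term of the disjunction in the equivalent $x$-form: the constraint $-r^i s \ge f_i$ is just $x_i \le 0$, and $r^i s \ge 1-f_i$ is $x_i \ge 1$. So the original four-term disjunction in $s$ is the same as the four-term disjunction in $(x_1,x_2)$ enumerating the sign patterns $(x_1\le 0 \text{ or } \ge 1)$ and $(x_2\le 0\text{ or }\ge 1)$, which is trivially satisfied by any $(x,s) \in P_I$ because $x_1,x_2\in\Z$.

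Next I would carry out the substitution that defines the modification. Replacing $r^i_j$ with $r^i_j - m^i_j$ for $j\in J_1$ in, say, the first row of any given term adds $\sum_{j\in J_1} m^i_j s_j$ to the left-hand side. Introducing
\[
y_i \ \coloneqq\ x_i \,-\, \sum_{j \in J_1} m^i_j s_j, \qquad i=1,2,
\]
the four modified inequality pairs are exactly $(y_1\le 0,\,y_2\le 0)$, $(y_1\ge 1,\,y_2\le 0)$, $(y_1\ge 1,\,y_2\ge 1)$, $(y_1\le 0,\,y_2\ge 1)$, obtained from the original pairs by the same one-to-one substitution $x_i\mapsto y_i$.

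Finally I would close the argument by integrality. For any $(x,s)\in P_I$ we have $x_i\in\Z$, and by hypothesis $s_j\in\Z$ for $j\in J_1$; since $m^i_j\in\Z$, the combination $y_i = x_i - \sum_{j\in J_1} m^i_j s_j$ is an integer. Therefore each $y_i$ satisfies $y_i\le 0$ or $y_i\ge 1$, so at least one of the four sign patterns in $(y_1,y_2)$ holds, which is exactly to say that one of the four modified terms is satisfied. Hence the modified disjunction is valid for $P_I$.

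There is essentially no hard step here: the lemma is a coefficient-shifting observation in the spirit of the classical Gomory/Balas-Jeroslow strengthening of disjunctive cuts, and the only care needed is to check that substituting $y_i$ for $x_i$ preserves the form of each disjunctive term, which it does because the shift $\sum_{j\in J_1} m^i_j s_j$ is added to both the $\le 0$ and $\ge 1$ inequalities identically. The same argument clearly extends term-by-term, so that one is free to shift any subset of the coefficients $r^i_j$, $i=1,2$, $j\in J_1$, independently.
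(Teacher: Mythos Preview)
Your argument is correct and rests on the same observation as the paper's: after the shift, the expression $x_i-\sum_{j\in J_1} m^i_j s_j$ is an integer, so it automatically satisfies the elementary disjunction $\le 0$ or $\ge 1$. The paper phrases this by contradiction and treats one coefficient $r^{i_*}_{j_*}$ at a time, deriving that a violation would force the integer $\bar m^{i_*}_{j_*} s_{j_*}$ to lie strictly between two consecutive integers; your direct version, introducing $y_i$ and handling all shifts simultaneously, is a cleaner packaging of the identical idea.
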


\begin{proof}
Suppose there exists $\istar \in \{ 1,2 \}$ and $\jstar \in J_1$ such that replacing $r^{\istar}_{\jstar}$ with $r^{\istar}_{\jstar} - \bar{m}^{\istar}_{\jstar}$, where $\bm^{\istar}_{\jstar} \in \Z$, violates (\ref{nonbasicdisjunction_integralnonbasics}). Then there exists a solution $(x,s) \in \PI$ with $x \in \Z^2$ such that
$$(- (\rijstar - \bmijstar)\sjs - \sum_{\jinJ\sm \{\jstar\}} r^{\istar}_j \sj < f_{\istar}) \ \ \wedge \ \ ((\rijstar - \bmijstar)\sjs + \sum_{\jinJ\sm\{\jstar\}} r^{\istar}_j \sj < 1 - f_{\istar})
$$
holds. Rewriting this expression so as to bring together the terms in $\bmijstar$ we get
$$\sum_{\jinJ} r^{\istar}_j \sj + f_{\istar} - 1 < \bmijstar \sjs < \sum_{\jinJ} r^{\istar}_j \sj + f_{\istar}
$$
or
$$-1 < \bmijstar < 0
$$
contrary to the fact that both $\bmijstar$ and $\sjs$ are integer.
\end{proof}

\begin{theorem}\label{th7.2}
Given $(\bar{v},\bar{w}) \geq 0$ defining a parametric octahedron, the cut $\alpha s \geq 1$ can be strengthened to $\bar{\alpha} s \geq 1$ with coefficients $\bar{\alpha}_j, j\in J_1$ given by the 3-variable mixed integer program
\begin{equation}
\label{strengthenedalphaj}
\begin{array}{lllllll}
\min & \alpha_j\\
& \alpha_j & - \bar{v}_1 m^1_j - \bar{w}_1 m^2_j& \geq & -r_j^1 \bar{v}_1 - r_j^2 \bar{w}_1\\
& \alpha_j & + \bar{v}_2 m^1_j - \bar{w}_2 m^2_j& \geq & +r_j^1 \bar{v}_2 - r_j^2 \bar{w}_2\\
& \alpha_j & + \bar{v}_3 m^1_j + \bar{w}_3 m^2_j& \geq & +r_j^1 \bar{v}_3 + r_j^2 \bar{w}_3\\
& \alpha_j & - \bar{v}_4 m^1_j + \bar{w}_4 m^2_j& \geq & -r_j^1 \bar{v}_4 + r_j^2 \bar{w}_4\\
& \multicolumn{3}{l}{m^1_j, m^2_j \in \mathbb{Z}.}
\end{array}
\end{equation}
The coefficients for $j \in J_2$ remain unchanged at $\bal_j = \al_j$ as in Proposition~\ref{pr2}.
\end{theorem}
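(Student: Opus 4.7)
The plan is to combine Lemma~\ref{lem2.3.5} with Proposition~\ref{pr2}. By Lemma~\ref{lem2.3.5}, for \emph{any} choice of integer shifts $\{(m^1_j,m^2_j)\in\Z^2 : j\in J_1\}$ the disjunction obtained from \sevene\ by replacing each $r^i_j$ with $r^i_j-m^i_j$ (for $i=1,2$ and $j\in J_1$) remains valid for $\PI$. Hence any cut derived from this shifted disjunction via the CGLP of Section~\ref{sec:integer_and_disjunctive_hulls} is valid for $\PI$. Fix the multipliers at the given $(\bar v,\bar w)\geq 0$; by Proposition~\ref{pr2} (specialized to $q=2$ and formulas (\ref{alphas_expression})), the coefficient of $s_j$ in the resulting cut is
\begin{equation*}
\bar\alpha_j=\max\{\bar\alpha^1_j,\bar\alpha^2_j,\bar\alpha^3_j,\bar\alpha^4_j\},
\end{equation*}
where each $\bar\alpha^i_j$ is obtained from (\ref{alphas_expression}) by replacing $r^i_j$ with $r^i_j-m^i_j$.

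The second step is to translate this $\max$ into the constraints of (\ref{strengthenedalphaj}). For $j\in J_1$, writing $\bar\alpha_j\geq\bar\alpha^i_j$ for $i=1,\dots,4$ and moving the terms linear in $(m^1_j,m^2_j)$ to the left-hand side yields exactly the four inequalities of (\ref{strengthenedalphaj}); moreover, the right-hand sides are precisely $\bar\alpha^i_j$ evaluated with $m^1_j=m^2_j=0$, i.e.\ the original coefficient values $\alpha^i_j$ from (\ref{alphas_expression}). The variables $m^1_j,m^2_j$ are constrained to be integer in view of Lemma~\ref{lem2.3.5}.

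The third step is to observe that the cut $\bar\alpha s\geq 1$ is strongest when each coefficient $\bar\alpha_j$ is as small as possible. Since the four inequalities that bound $\bar\alpha_j$ from below depend only on $(m^1_j,m^2_j)$ (the multipliers $(\bar v,\bar w)$ being fixed once and for all), and the index $j$ does not couple to any other index in these inequalities, the minimization decomposes over $j$. Hence for each $j\in J_1$ we may independently minimize $\alpha_j$ subject to the four inequalities and $m^1_j,m^2_j\in\Z$, which is exactly the 3-variable MIP (\ref{strengthenedalphaj}). For $j\in J_2$ the variable $s_j$ is not integer-constrained, so Lemma~\ref{lem2.3.5} does not authorize any shift, and $\bar\alpha_j=\alpha_j$ as given by Proposition~\ref{pr2}.

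I do not expect a serious obstacle: the argument is a direct combination of Lemma~\ref{lem2.3.5} (validity of integer shifts per nonbasic integer column) and Proposition~\ref{pr2} (coefficient formula), the only subtle point being the decomposition across $j\in J_1$, which is immediate once one notes that the multipliers $(\bar v,\bar w)$ are held fixed and that the constraints on $(m^1_j,m^2_j)$ in the CGLP for each $j\in J_1$ are uncoupled across $j$.
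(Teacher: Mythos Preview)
Your argument is correct and follows the same route as the paper's proof, which is the one-liner ``Validity of $\bar\alpha s\ge 1$ follows from Lemma~\ref{lem2.3.5}''; you have simply spelled out in detail how Lemma~\ref{lem2.3.5} combined with the coefficient formula (\ref{alphas_expression}) yields the MIP (\ref{strengthenedalphaj}) and why the minimization decouples across $j$. One small slip: you refer to the disjunction \sevene, but Lemma~\ref{lem2.3.5} and this theorem concern the \emph{inequality} disjunction (\ref{nonbasicdisjunction_integralnonbasics}), not its equality version---a distinction that becomes important in Section~\ref{chapter2_section_01_disjunctive_hull}.
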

\begin{proof}
Validity of $\bar{\alpha} s \geq 1$ follows from Lemma~\ref{lem2.3.5}.
\end{proof}

\begin{theorem}\label{th7.3}
The mixed integer program (\ref{strengthenedalphaj}) has an optimal solution ($\bal_j, \bm^1_j,\bm^2_j$) satisfying $\bm^i_j \in \{ \lfloor \br^i_j \rfloor, \lceil \br^i_j \rceil\}$, $i=1,2$.
\end{theorem}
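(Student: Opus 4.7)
Setting $\rho = r_j - m$ converts (\ref{strengthenedalphaj}) into $\min_{m\in\mathbb{Z}^2} g(r_j-m)$, where
\[g(\rho)=\max_{i=1,\ldots,4} g_i(\rho)\]
and $g_1(\rho) = -\bar v_1\rho^1 - \bar w_1\rho^2$, $g_2(\rho) = \bar v_2\rho^1 - \bar w_2\rho^2$, $g_3(\rho) = \bar v_3\rho^1 + \bar w_3\rho^2$, $g_4(\rho) = -\bar v_4\rho^1 + \bar w_4\rho^2$. The function $g$ is convex, piecewise linear, positively homogeneous, and satisfies $g(\rho)\geq 0$ with $g(0)=0$; geometrically it is the gauge of the parametric octahedron $P_{\text{octa}}(\bar v,\bar w)$ translated so that $f$ is at the origin. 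Accordingly the optimal $\bar\alpha_j$ equals this minimum, and the LP relaxation attains value $0$ at the (non-integer) point $m=r_j$.

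My plan is an exchange argument. Let $(\bar\alpha_j, \bar m_j^1, \bar m_j^2)$ be any optimal solution, chosen to minimize $|\bar m_j^1 - r_j^1| + |\bar m_j^2 - r_j^2|$ among all optima, and suppose toward a contradiction that $\bar m_j^1 < \lfloor r_j^1 \rfloor$ (the remaining three cases are symmetric), so $\bar\rho^1 := r_j^1 - \bar m_j^1 \geq 1$. Setting $m' = \bar m + e_1$ and $\rho' = \bar\rho - e_1$, the linearity of each $g_i$ yields
\[g_1(\rho') = g_1(\bar\rho) + \bar v_1,\ g_2(\rho') = g_2(\bar\rho) - \bar v_2,\ g_3(\rho') = g_3(\bar\rho) - \bar v_3,\ g_4(\rho') = g_4(\bar\rho) + \bar v_4,\]
so $g_2(\rho'), g_3(\rho') \leq g(\bar\rho)$ automatically. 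It therefore suffices to show $g_1(\bar\rho) + \bar v_1 \leq g(\bar\rho)$ and $g_4(\bar\rho) + \bar v_4 \leq g(\bar\rho)$, because then $g(\rho') \leq g(\bar\rho)$, $m'$ is also optimal, and the minimality of $\bar m$ is violated. Iterating on both coordinates then drives any optimum into the canonical set $\{\lfloor r_j^i\rfloor, \lceil r_j^i\rceil\}$.

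The verification of these two inequalities is the technical heart of the proof. I would combine two ingredients: (a) the local MIP-optimality of $\bar m$ at the opposite shift $\bar m \mapsto \bar m - e_1$, which by the analogous calculation forces $\max\bigl(g_2(\bar\rho) + \bar v_2,\, g_3(\bar\rho) + \bar v_3\bigr) \geq g(\bar\rho)$ (since $g_1, g_4$ strictly decrease under that shift and cannot sustain the maximum); and (b) the algebraic identities $g_3(\bar\rho) - g_1(\bar\rho) = (\bar v_1 + \bar v_3)\bar\rho^1 + (\bar w_1 + \bar w_3)\bar\rho^2$ and $g_2(\bar\rho) - g_4(\bar\rho) = (\bar v_2 + \bar v_4)\bar\rho^1 - (\bar w_2 + \bar w_4)\bar\rho^2$, which for $\bar\rho^1 \geq 1$ yield lower bounds of $\bar v_1 + \bar v_3 + (\bar w_1 + \bar w_3)\bar\rho^2$ and $\bar v_2 + \bar v_4 - (\bar w_2 + \bar w_4)\bar\rho^2$ respectively. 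When $\bar\rho^2 \geq 0$ the first bound immediately gives $g_3 \geq g_1 + 2\bar v_1$, hence $g_1 + \bar v_1 \leq g_3 - \bar v_1 \leq g(\bar\rho)$; symmetrically, $\bar\rho^2 \leq 0$ handles $g_4 + \bar v_4 \leq g(\bar\rho)$ through the $g_2$--$g_4$ identity. The main obstacle, and where ingredient (a) becomes essential, is to close off the mixed regime: when the sign of $\bar\rho^2$ is unfavorable for one of the two needed bounds, the local-optimality hypothesis plus (b) must be dovetailed with the analogous exchange in the second coordinate, handled first. A careful ordering of the exchange steps—always pushing the coordinate with the larger violation $|\bar m_j^i - r_j^i|$—makes the induction close.
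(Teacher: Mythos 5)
Your reduction to minimizing the gauge $g(r_j-m)$ over $m\in\Z^2$, and the exchange strategy built on a minimal-distance optimum, are a sensible framework, and your sign analysis correctly isolates the crux: when $\bar\rho^1=r_j^1-\bar m_j^1\ge 1$, the shift $m\mapsto m+e_1$ can only raise $g_1$ and $g_4$, so everything reduces to $g_1(\bar\rho)+\bar v_1\le g(\bar\rho)$ and $g_4(\bar\rho)+\bar v_4\le g(\bar\rho)$. But the proof is not complete, and the part you defer is precisely the hard part. In the mixed regime (say $\bar\rho^2>0$, where the bound on $g_4+\bar v_4$ is needed), ingredients (a) and (b) do not combine to give it: (b) yields $g_2(\bar\rho)-g_4(\bar\rho)\ge \bar v_2+\bar v_4-(\bar w_2+\bar w_4)\bar\rho^2$, which is vacuous once $\bar\rho^2$ is not small, while (a) only bounds $g(\bar\rho)$ \emph{from above} by $\max\bigl(g_2+\bar v_2,\,g_3+\bar v_3\bigr)$ and gives no upper bound on $g_4+\bar v_4$ in terms of $g(\bar\rho)$. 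What is needed is, e.g., $(\bar w_4-\bar w_3)\bar\rho^2\le(\bar v_3+\bar v_4)\bar\rho^1-\bar v_4$ so that $g_4+\bar v_4\le g_3\le g(\bar\rho)$, and this does not follow from optimality of $\bar m$ under the four unit moves $\pm e_1,\pm e_2$. Moreover, the proposed rescue---``handle the second coordinate first''---cannot even start in the subcase $\bar\rho^1\ge1$, $0<\bar\rho^2<1$, because there $\bar m_j^2$ is already standard and no second-coordinate exchange is available, yet the unfavorable sign of $\bar\rho^2$ persists. (A minor slip besides: your identity gives $g_3\ge g_1+\bar v_1+\bar v_3$, not $g_1+2\bar v_1$; the conclusion you draw from it is still valid.)

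To close the gap you must use the one piece of data your argument never touches: the normalization rows of (\ref{cglpmip_normalized}), equivalently the fact that the $k$-th facet of $P_{\text{octa}}(\bar v,\bar w)$ passes through the $k$-th vertex of the unit square with a quadrant-restricted normal. This already yields a useful structural fact you could exploit: if $|\bar\rho^1|\ge1$ and $|\bar\rho^2|\ge1$ simultaneously, the facet whose sign pattern matches that of $\bar\rho$ gives $g(\bar\rho)\ge\bar v_k|\bar\rho^1|+\bar w_k|\bar\rho^2|\ge\bar v_k+\bar w_k>1$, whereas Lemma~\ref{modularization_in_K} supplies a standard modularization of value at most $1$; hence an optimum has at most one non-standard coordinate, and the problem becomes genuinely one-dimensional. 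But the remaining one-dimensional step still requires the normalization, since the minimizer of $t\mapsto g(t,\bar\rho^2)$ over $\R$ need not lie in $[\{r_j^1\}-1,\{r_j^1\}]$. For comparison, the paper's own proof argues from two-dimensional convexity of $\al(m^1_j,m^2_j)$ alone, asserting $\al(\lceil r^1_j\rceil,r^2_j)<\al(\lceil r^1_j\rceil,\lceil r^2_j\rceil)$ without justification; as a statement about general convex functions that step is false, so it too is thin at exactly the point where your argument stalls. Your framework is arguably the more honest route, but as written it is a proof plan, not a proof.
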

\begin{proof}
Let $(\tal_j, \tm^i_j, \tm^2_j)$ be an optimal solution to the problem obtained from (\ref{strengthenedalphaj}) by adding the constraint $m^i_j \in \{ \lfloor \br^i_j \rfloor, \lceil \br^i_j \rceil\}$. We will show that this solution cannot be improved by replacing $\tm^1_j, \tm^2_j$ with any other pair of integers.

Consider the linear programming relaxation of (\ref{strengthenedalphaj}), which asks for minimizing the maximum of four linear functions. This is a piece-wise linear convex programming problem whose minimum is attained for $m^i_j = r^i_j$, $i = 1,2$, yielding $\al_j = \al^1_j = \ldots \ldots = \al^4_j = 0$. From the convexity of the objective function $\al ( m^1_j,m^2_j)$ it follows that the integer optimum occurs at one of the points $(m^1_j, m^2_j) \in \{ ( \lfloor r^1_j \rfloor, \lfloor r^2_j\rfloor)$, $(\lfloor r^1_j \rfloor, \lceil r^2_j \rceil)$, $( \lceil r^1_j \rceil, \lfloor r^2_j \rfloor)$, $(\lceil r^1_j\rceil, \lceil r^2_j \rceil)\}$. For suppose the optimum were to occur at some other point, say $(\hmoj,\hmtj)$, where $\hmoj = \ceilroj$ and $\hmtj = \ceilrtj + d_j$ for some $d_j > 0$. Then
$$\begin{array}{lcl}
~~~~~~~~~~~~~~~~~~~~~~~~~\al ( \ceilroj, \ceilrtj + d_j) & < & \al ( \ceilroj, \ceilrtj),\\[12pt]
~~~~~~~~~~~~~~~~~~~~~~~~~\al ( \ceilroj, \rtj) & < & \al (\ceilroj,\ceilrtj),\\
\multicolumn{3}{l}{\rm hence~~~~~~~~~~~~~~~~~~~~~~~~~~~~~~~~~~~~~~~~~~~~~~~~~~~~~~~~~~~~~~~~~~~~~~~~~~~~~~~~~~~~~~~~~~~~~~~~~~~~~~~~~~~~~~~~~~~~~~~~~~~~~~~}\\
~~~~~~~~~~~~~~~~~~~~~~~~~\al (\ceilroj, \ceilrtj) & > & \lambda \al ( \ceilroj, \rtj) + (1-\lambda) \al ( \ceilroj,\ceilrtj + d_j) \mbox{ for } 0 \le \lambda \le 1,
\end{array}
$$
i.e. the value of the minimum at a point which lies on the line between $(\ceilroj,\rtj)$ and $(\ceilroj,\ceilrtj + d_j)$ is larger than a convex combination of the values of the minimum at the endpoints of the line, contrary to the assumption that $\al ( \moj,\mtj)$ is a convex function.
\end{proof}

The operation of replacing $r^i_j$ by $r^i_j - m^i_j$ for some $m^i_j \in \Z$, $i = 1,2$, in the expression for $\al$, is called the modularization of $r^i_j$, or more generally, the modularization of the cut $\al x \ge 1$. Using $\mij \in \{ \lfloor \rij \rfloor, \lceil \rij \rceil \}$ is called the standard modularization. It can be shown (see below) that the mixed integer program (\ref{strengthenedalphaj}) attains its optimum for a standard modularization.

\begin{lemma}
\label{modularization_in_K}
There exists a standard modularization $\bar{r}$ of the ray $r$ such that
\begin{equation}
\label{point_in_K}
0\leq f_i + \bar{r}^i\leq 1, \quad \quad i\in\{1,2\}
\end{equation}
i.e. the point $(f+\bar{r})$ belongs to $K$.
\proof
If $f_i + r^i-\lfloor r^i \rfloor \leq 1$ then let $m^i=\lfloor r^i \rfloor$. Note that the condition $f_i + r^i-\lfloor r^i \rfloor \geq 0$ follows since $0\leq f_i\leq 1$ and $r^i - \lfloor r^i \rfloor \geq 0$.
Otherwise ($f_i + r^i-\lfloor r^i \rfloor > 1$) let $m^i=\lceil r^i \rceil$ and from $f_i \leq 1$ and $r^i-\lfloor r^i \rfloor \leq 1$ we get $0 \leq f_i + r^i - \lfloor r^i \rfloor -1 = f_i + r^i - \lceil r^i \rceil \leq 1$.
\end{lemma}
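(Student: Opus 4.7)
The plan is to establish the lemma coordinate by coordinate, since condition (\ref{point_in_K}) decouples across $i \in \{1,2\}$ and the standard modularization allows $m^1$ and $m^2$ to be chosen independently. For each fixed $i$, I would write $r^i = \lfloor r^i \rfloor + \phi_i$ with fractional part $\phi_i \in [0,1)$, and then split into two cases depending on the size of $f_i + \phi_i$.

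In the first case, suppose $f_i + \phi_i \le 1$. Then I take $m^i = \lfloor r^i \rfloor$, so that $\bar r^{\,i} = \phi_i \ge 0$. The upper bound $f_i + \bar r^{\,i} \le 1$ is exactly the case hypothesis, and the lower bound $f_i + \bar r^{\,i} \ge 0$ follows from $f_i > 0$ and $\phi_i \ge 0$. In the second case, suppose $f_i + \phi_i > 1$. Note this forces $\phi_i > 0$, since $f_i < 1$; hence $\lceil r^i \rceil = \lfloor r^i \rfloor + 1$ is a legitimate, distinct choice. Taking $m^i = \lceil r^i \rceil$ gives $\bar r^{\,i} = \phi_i - 1 \in (-1,0)$, and so $f_i + \bar r^{\,i} = f_i + \phi_i - 1$. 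The positivity of this quantity is exactly the case hypothesis, and the upper bound $\le 1$ follows from $f_i < 1$ together with $\phi_i - 1 < 0$.

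In both cases $m^i \in \{\lfloor r^i \rfloor, \lceil r^i \rceil\}$, so $\bar r$ is a standard modularization of $r$, and $(f+\bar r) \in K$. I do not expect any real obstacle here; the only thing requiring attention is the degenerate possibility that $r^i$ is itself an integer, in which case $\phi_i = 0$ and the first case applies trivially (with $\lfloor r^i \rfloor = \lceil r^i \rceil$), so no separate treatment is needed. Thus the proof reduces entirely to a short case analysis on the two coordinates.
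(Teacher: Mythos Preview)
Your proof is correct and follows essentially the same two-case argument as the paper: set $m^i=\lfloor r^i\rfloor$ when $f_i+(r^i-\lfloor r^i\rfloor)\le 1$ and $m^i=\lceil r^i\rceil$ otherwise. Your write-up is in fact slightly more careful than the paper's, since you explicitly justify $\lceil r^i\rceil=\lfloor r^i\rfloor+1$ in the second case and handle the degenerate integer-$r^i$ situation.
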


For $k=1,\ldots,4$, let $\bal^k_j$ be obtained from $\al^k_j$ of (\ref{alphas_expression}) by substituting $\br^{\,i}_j$ for $\rij$, $i = 1,2$. One can show that each $\bal^k_j$ is the convex combination of one of the expressions $\frac{-\broj}{f_1}$ or $\frac{\broj}{1-f_1}$ with one of the expressions $\frac{-\brtj}{f_2}$ or $\frac{\brtj}{1-f_2}$. To be specific, we have
\begin{lemma}\label{le7.5}
$$\begin{array}{rcll}
\bal ^1_j & = & \lambda_1 \frac{-\broj}{f_1} + (1 - \lambda_1 ) \frac{-\brtj}{f_2}  \quad , ~ & \mbox{with } \lambda_1 = \bv_1 f_1\\[12pt]
\bal^2_j & = & \lambda_2 \frac{\broj}{1-f_1} + (1 - \lambda_2 ) \frac{-\brtj}{f_2}  \quad , ~ & \mbox{with } \lambda_2 = \bv_2 (1-f_1)\\[12pt]
\bal^3_j & = & \lambda_3 \frac{\broj}{1-f_1} + (1 - \lambda_3 ) \frac{\brtj}{1-f_2} \quad , ~ & \mbox{with } \lambda_3 = \bv_3 (1-f_1)\\[12pt]
\bal^4_j & = & \lambda_4 \frac{-\broj}{f_1}  + (1 - \lambda_4 ) \frac{\brtj}{1-f_2} \quad , ~ & \mbox{with } \lambda_4 = \bv_4 f_1\\[12pt]
\end{array}
$$
\end{lemma}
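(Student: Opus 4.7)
The proof is essentially a direct verification for each of the four cases, relying on the normalization equations from (\ref{cglpmip_normalized}). First I would observe that by the definition of $\bar{\alpha}^k_j$ (obtained from (\ref{alphas_expression}) by substituting $\bar{r}^{\,i}_j$ for $r^i_j$), each expression is a linear combination $\pm \bar{v}_k \bar{r}^{\,1}_j \pm \bar{w}_k \bar{r}^{\,2}_j$ whose sign pattern exactly matches the orthant-cone of the $k$-th term. The normalization row of (\ref{cglpmip_normalized}) that corresponds to this same term provides a partition of unity in the form $a_k \bar{v}_k + b_k \bar{w}_k = 1$, where $(a_k,b_k)$ is one of $(f_1,f_2)$, $(1-f_1,f_2)$, $(1-f_1,1-f_2)$, $(f_1,1-f_2)$.

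The main step is then a one-line verification per case. For instance, for $k=1$, setting $\lambda_1 = \bar{v}_1 f_1$ gives $1-\lambda_1 = f_2 \bar{w}_1$ from the normalization $f_1 \bar{v}_1 + f_2 \bar{w}_1 = 1$, and substituting into the claimed convex combination yields
\[
\lambda_1 \frac{-\broj}{f_1} + (1-\lambda_1)\frac{-\brtj}{f_2}
= -\bar{v}_1 \broj - \bar{w}_1 \brtj = \bal^1_j.
\]
The same pattern, with signs and $f_i$ replaced by $1-f_i$ as dictated by the relevant normalization row of (\ref{cglpmip_normalized}), handles $k=2,3,4$; in each case $\lambda_k$ is exactly the product of $\bar{v}_k$ and the coefficient of $\bar{v}_k$ in the $k$-th normalization equation.

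Finally, I would verify that $\lambda_k \in [0,1]$, so that these are genuine convex combinations. Nonnegativity is immediate from $\bar{v}_k,\bar{w}_k \ge 0$ and $0<f_1,f_2<1$. The upper bound follows from the normalization: e.g.\ $\lambda_1 = \bar{v}_1 f_1 = 1 - f_2 \bar{w}_1 \le 1$, and similarly in the remaining three cases. There is no real obstacle here; the lemma is a bookkeeping statement whose only content is the pairing of each $\bar{\alpha}^k_j$ with its normalization row, and the entire proof reduces to four symmetric algebraic identities.
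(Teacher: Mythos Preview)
Your proposal is correct and follows exactly the same approach as the paper's proof, which is a one-line ``by substituting for the $\lambda_k$'' verification. You simply spell out the substitution in more detail and add the check that each $\lambda_k\in[0,1]$, which the paper leaves implicit.
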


\begin{proof}
By substituting for the $\lambda_k$, $k=1,\ldots,4$, we get the corresponding expressions for $\bal^k_j$.
\end{proof}

\begin{theorem}\label{th7.6}
The strengthened cut $\bal s \ge 1$ satisfies $0 \le \bal_j \le 1$, $j \in J_1$.
\end{theorem}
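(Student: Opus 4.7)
The plan is to derive the bounds $0 \le \bal_j \le 1$ from the bracketing property of the standard modularization (Lemma~\ref{modularization_in_K}) together with the convex-combination representation of Lemma~\ref{le7.5}.

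First, I would invoke Lemma~\ref{modularization_in_K}: for the standard modularization $\br^i_j$ of $r^i_j$ we have $0 \le f_i + \br^i_j \le 1$, which I rewrite as the two-sided bound $-f_i \le \br^i_j \le 1-f_i$ for $i \in \{1,2\}$. This is the key structural fact that controls both endpoints of the convex combinations appearing in Lemma~\ref{le7.5}.

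To establish $\bal_j \le 1$, I would check each of the four expressions $\bal^k_j$ in Lemma~\ref{le7.5}. Each is a convex combination (with weights $\lambda_k,\,1-\lambda_k \in [0,1]$, as guaranteed by the normalization of the CGLP, i.e.\ $\bv_1 f_1, \bv_2(1-f_1), \bv_3(1-f_1), \bv_4 f_1 \in [0,1]$) of two terms of the form $\pm \br^i_j / f_i$ or $\pm \br^i_j/(1-f_i)$. The bracketing $-f_1 \le \broj \le 1-f_1$ immediately gives $-\broj/f_1 \le 1$ and $\broj/(1-f_1) \le 1$, and similarly for $\brtj$. Hence each endpoint of the convex combination is $\le 1$, so $\bal^k_j \le 1$ for $k = 1,\ldots,4$, and therefore $\bal_j = \max_k \bal^k_j \le 1$.

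For $\bal_j \ge 0$, I would argue by a four-case split according to the signs of $(\broj,\brtj)$. If $\broj \le 0$ and $\brtj \le 0$, then $\bal^1_j$ is a convex combination of nonnegative quantities, so $\bal^1_j \ge 0$. The remaining three sign patterns correspond respectively to $\bal^2_j, \bal^3_j, \bal^4_j$ being nonnegative by the analogous argument, since the two signs in front of $\broj$ and $\brtj$ in the expression for $\bal^k_j$ are exactly the four sign patterns from the orthant-cone disjunction. Since one of the four $\bal^k_j$ is always nonnegative, the maximum $\bal_j$ is nonnegative. Combining the two bounds yields $0 \le \bal_j \le 1$.

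The main obstacle, such as it is, is verifying cleanly that the weights $\lambda_k$ lie in $[0,1]$; this follows from the normalization equations in the CGLP \eqref{cglpmip_normalized} (e.g.\ $\bv_1 f_1 + \bw_1 f_2 = 1$ with $\bv_1,\bw_1 \ge 0$ forces $\bv_1 f_1 \in [0,1]$), so this is routine rather than subtle. Everything else is just the two-sided bound on $\br^i_j$ feeding into Lemma~\ref{le7.5}.
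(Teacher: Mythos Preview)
Your proof is correct and follows essentially the same approach as the paper's: both invoke the modularization of Lemma~\ref{modularization_in_K} together with Lemma~\ref{le7.5} and the CGLP normalization \eqref{cglpmip_normalized} to bound the coefficients. Your execution is slightly more uniform --- you use the two-sided bracketing $-f_i \le \br^i_j \le 1-f_i$ to bound all four convex-combination endpoints at once, whereas the paper splits into the four cases according to which of $\lfloor r^i_j\rfloor$, $\lceil r^i_j\rceil$ the lemma selects for each $i$ and then bounds each $\bal^k_j$ individually.
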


\begin{proof}
Since $\bv_k,\bw_k \ge 0$ for all $k$, we have $\bal^k_j \ge 0$ for at least one of the four $k$, hence $\bal_j \ge 0$. Let $(\bal, \bm^1, \bm^2)$ be an optimal solution to (\ref{strengthenedalphaj}). Let $\br^i = r^i - \bm^i$, $i = 1,2$, where $\bm^i \in \{ \floorri , \ceilri\}$ $i = 1,2$. There are four cases:

\noindent {\bf Case 1.} $\bm^i = \floorri, i = 1,2$. Then $\br^i = r^i - \bm^i \ge 0$, $i = 1,2$, and
\begin{tabbing}
$\bal^1$ \= $=$ \= $-\br^1 \bv_1 - \br^2 \bw_1 \le 0$.\\[6pt]
$\bal^2$ \> $=$ \> $ \br^1 \bv_2 - \br^2 \bw_2 \le \br^1/(1-f_1)$ (from (\ref{cglpmip_normalized})). From (\ref{point_in_K}), $\br^1 /(1 - f_1) \le 1$, hence $\bal^2 \le 1$.\\[6pt]
$\bal^3$ \> $=$ \> \parbox[t]{6in}{$\br^1 \bv_3 + \br^2 \bw_3  =  \lambda_3 \ \br^1/(1-f_1) + (1 - \lambda_3) \br^2/(1-f_2)$, with $\lambda_3 = \bv_3(1-f_1)$ (from Lemma~\ref{le7.5}). But from Lemma~\ref{modularization_in_K}, $\br^i/(1-f_i) \le 1$, $ i = 1,2$, hence $\bal^3 \le 1$.}\\[6pt]
$\bal^4$ \> $=$ \> $-\br^1\bv_4 + \br^2 \bw_4 \le \br^2/(1-f_2) \le 1$ (from \ref{point_in_K}), hence $\bal^4 \le 1$.
\end{tabbing}
The remaining three cases, namely $(\bm^1,\bm^2) = (\ceilbro,\floorbrt)$, $(\bm^1, \bm^2) = (\floorbro, \ceilbrt)$, and $\bm^i = ( \ceilbro, \ceilbrt)$, $i = 1,2$, are similar.
\end{proof}

A way to further strengthen these cuts consists in the following three-step procedure:
\begin{enumerate}
\item Apply standard modularization to each of the two rows from which the cut is generated (i.e. replace the ray $\rij$ by $\rij - \lfloor \rij \rfloor$ if $\rij > 0$ and by $\rij - \lceil \rij \rceil$ if $\rij < 0$, $i = 1, 2$, $j \in J_1$).
\item Generate a cut $\al x \ge 1$ from the two modularized rows.
\item Modularize the resulting cut to obtain the strengthened cut $\bal x \ge 1$.
\end{enumerate}

Yet another way to use the integrality of the variables $s_j$, $j \in J_1$, is to apply the monoidal cut strengthening procedure of \cite{BalasJeroslow}. For cuts generated from a disjunction of the form (\ref{nonbasicdisjunction_integralnonbasics}), this procedure involves the use of lower bounds on the expressions on the lefthand side of each inequality. While these bounds are readily available and quite tight in the case when $x_1, x_2 \in \{ 0,1 \}$, they can be weak in the general case of $x_1,x_2 \in \Z$. We therefore defer the dicussion of this procedure until the section on the 0-1 disjunctive hull.

\section{The 0-1 Disjunctive Hull}
\label{chapter2_section_01_disjunctive_hull}
We now consider the 0-1 disjunctive hull $\pde$ for $q=2$, i.e. we work with $P_{01} = \{ (x,s) \in \{0,1\}^{2} \times \mathbb{R}^{|J|} : (x,s) \in P_L\}$ where $P_L$ is given in (\ref{2rowstableau}). The CGLP that produces the facets of $\pde$ is the linear program with the constraint set of Theorem~\ref{th4.5}. In addition to the four configurations of the parametric octahedron for the MIP CGLP given in Section~\ref{chapter2_section_case_2_dimensions}, when $v,w$ are unrestricted in sign some additional configurations are possible: (a) triangles with each face containing exactly one vertex of $K$, which we call triangles of type C ($T_C$); and (b) cones, designated as $(C)$.

Note that our triangles of type C are similar to the class of triangles of type 3 for cuts for mixed integer programs described in \cite{DeyWolsey}. The difference between these classes is that on the one hand, the three integer points contained in the faces of triangles of type 3 defined in \cite{DeyWolsey} need not be vertices of $K$; on the other hand, our triangles of type C may also contain (non-0-1) integer points, positive or negative, in their interior. The presence among the parametric octahedra of unbounded ones, namely cones, implies that the cuts $\al s \ge 1$ of this class may have coefficients $\al_j < 0$.

As we did in Section~\ref{chapter2_section_case_2_dimensions}, we give a classification of the parametric cross-polytopes that correspond to disjunctive hull facets for the 0-1 case (i.e. facets of $\pde$). Let $k_1\in \{1,\dots,4\}$ be the index of any vertex of $K$. We denote by $k_2,k_3,k_4$ the indices of the vertices of $K$ that follow $k_1$ in counter-clockwise order. The following configurations, in addition to those for facets of $\PD$, are exhaustive when considering every value for $k_1\in\{1,\dots,4\}$ (mod 4) and swapping $v_i$ with $w_i$. In each case, the shape of \pocta\ is determined by a strict subset of the four pairs $(v_i,w_i)$, the remaining pairs being inactive.
\begin{itemize}
\item ($T_{C1}$) $v_{k_1}>0,w_{k_1}<0$; $v_{k_2},w_{k_2}>0$; $v_{k_3}>0,w_{k_3}>0$, $(v_{k_4}, w_{k_4}>0)$. \pocta\ is a triangle of type $C$ with all its vertices outside the cube $K$. The face corresponding to $k_4$ is inactive.  See Figure \ref{01parocta_t3t_cut2}.

\item ($T_{C2}$) $v_{k_1}>0,w_{k_1}<0$; $v_{k_2},w_{k_2}>0$; $v_{k_3}<0, w_{k_3}>0$,  $(v_{k_4}, w_{k_4}>0)$. \pocta\ is a triangle of type C with one vertex in the cube $K$. The face corresponding to $k_4$ is inactive. See Figure \ref{01parocta_t3t_cut}.

\item ($C_A$) $v_{k_2},v_{k_3}>0$; $w_{k_2}=w_{k_3}=0$, $v_{k_4}>0,w_{k_4}<0$, $(v_{k_1}, w_{k_1}>0)$. \pocta\ is a cone with one face containing two adjacent vertices of $K$, the other face containing one vertex of $K$. The face corresponding to $k_1$ is inactive. See Figure \ref{01parocta_cone_vertical}.

\item ($C_B$) $v_{k_1}<0, w_{k_1}>0$; $v_{k_3}>0, w_{k_3}<0$, $v_{k_4}, w_{k_4}>0$, $(v_{k_2},w_{k_2}>0)$. \pocta\ is a cone with one face containing two nonadjacent vertices of $K$, the other face containing one vertex of $K$. The face corresponding to $k_2$ is inactive. See Figure \ref{01parocta_cone_generic}.

\item ($C_C$) $v_{k_1}>0,w_{k_1}<0$; $v_{k_2},w_{k_2}>0$; $(v_{k_3}<0,w_{k_3}>0)$, $(v_{k_4}, w_{k_4}>0)$. \pocta\ is a cone with each face containing one vertex of $K$. The faces corresponding to $k_3$ and $k_4$ are inactive. See Figure \ref{01parocta_cone_truncated}.

\item ($C_{CT}$) $v_{k_1}>0,w_{k_1}<0$; $v_{k_2},w_{k_2}>0$; $v_{k_3},w_{k_3} > 0$; ($v_{k_4}, w_{k_4}>0$). \pocta\ is a truncated cone with each face containing one vertex of $K$. The face corresponding to $k_4$ is inactive.

\item ($S$) $v_{k_1}<0,w_{k_1}>0$; $v_{k_3}>0,w_{k_3}<0$, $(v_{k_2},w_{k_2} > 0, v_{k_4}, w_{k_4} > 0)$. \pocta\ is a tilted strip, each side of which contains one vertex of $K$. The faces corresponding to the remaining two vertices are inactive. See Figure \ref{01parocta_split_truncated}.

\item ($S_T$) $v_{k_1}, w_{k_1} > 0$; $v_{k_2}, w_{k_2} > 0$; $v_{k_3} > 0$, $w_{k_3} < 0$; ($v_{k_4}, w_{k_4} > 0$). \pocta\ is a truncated (tilted) strip, each side of which contains a vertex of $K$. The face corresponding to $k_4$ is inactive.
\end{itemize}

\begin{figure}[htbp]
\centering
\begin{tabular}{ccc}
\subfigure[triangle of type C with all vertices outside $K$]{
\includegraphics[width=40mm]{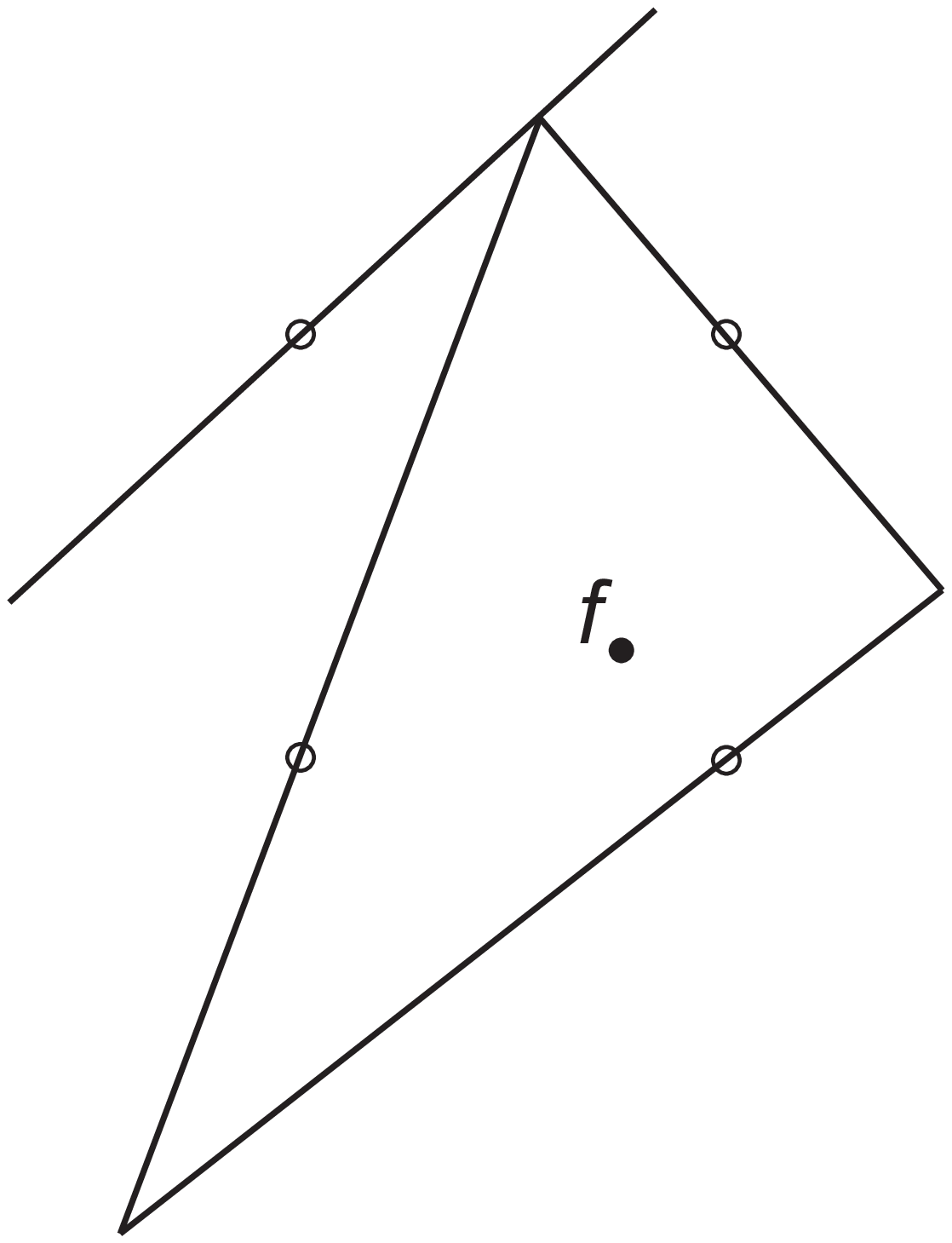}
\label{01parocta_t3t_cut2}
}
\subfigure[triangle of type C with one vertex inside $K$]{
\includegraphics[width=40mm]{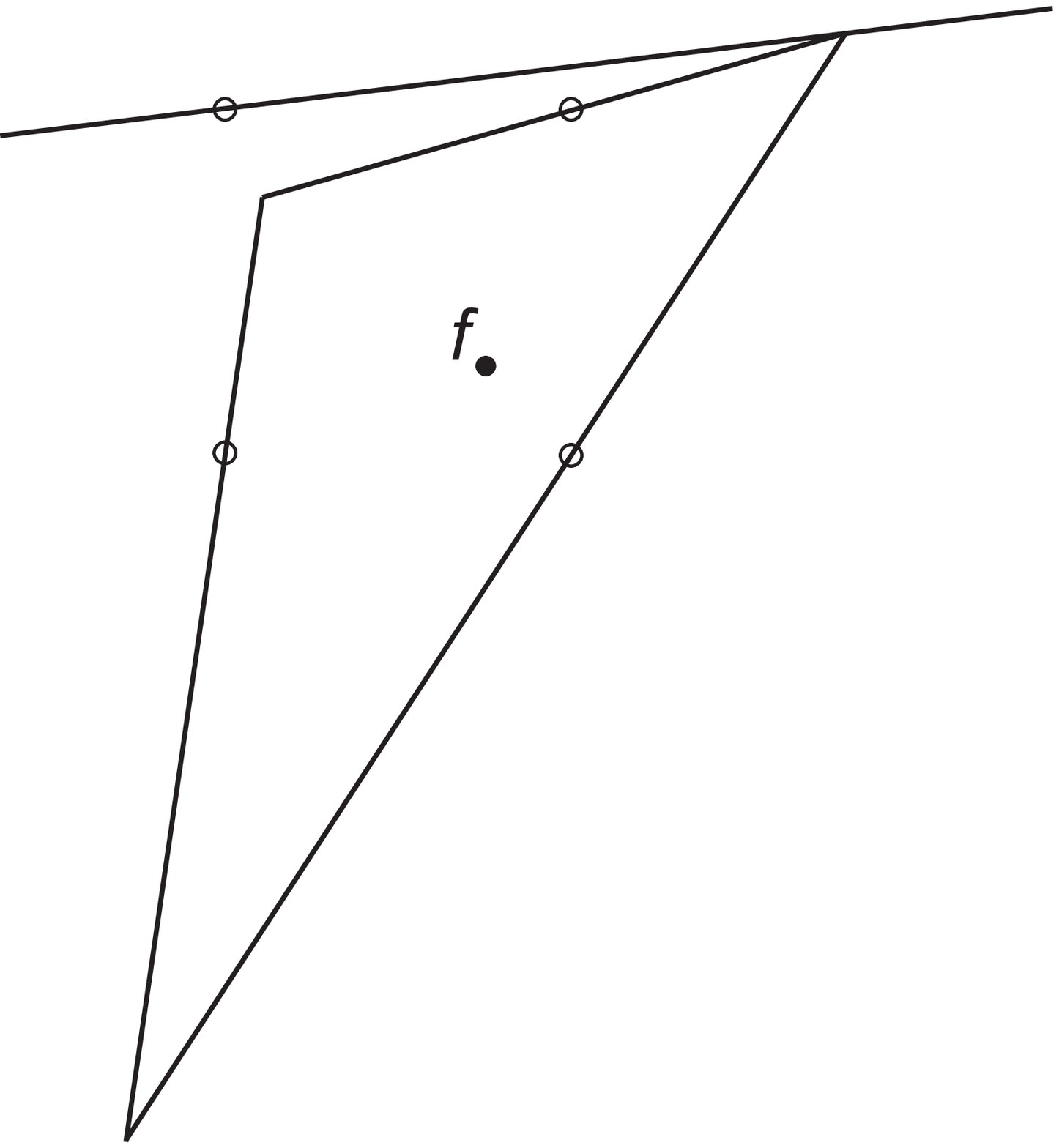}
\label{01parocta_t3t_cut}
}
\subfigure[cone with one face containing two adjacent vertices of $K$]{
\includegraphics[width=40mm]{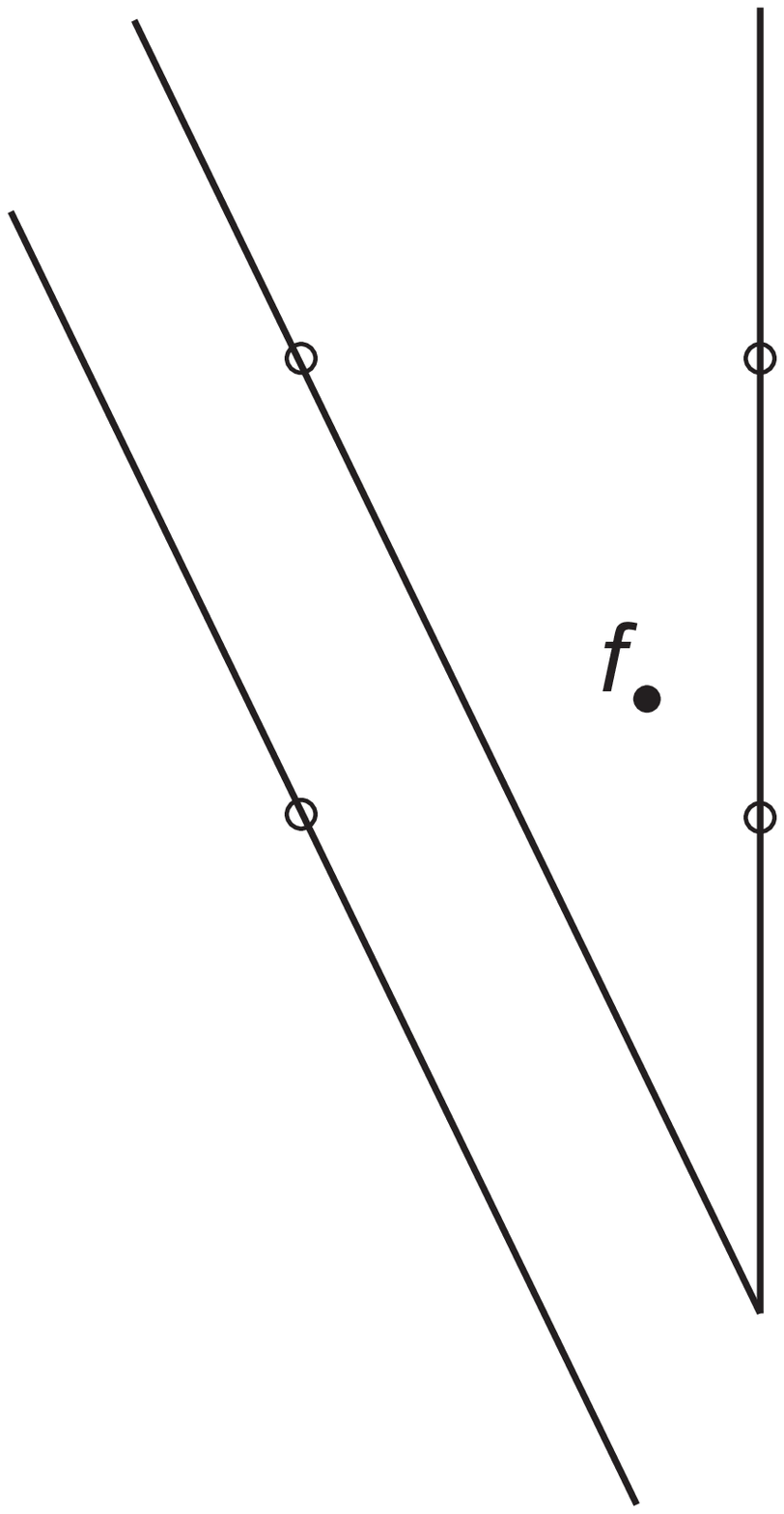}
\label{01parocta_cone_vertical}
}
\\
\subfigure[cone with one face containing two nonadjacent vertices of $K$]{
\includegraphics[width=40mm]{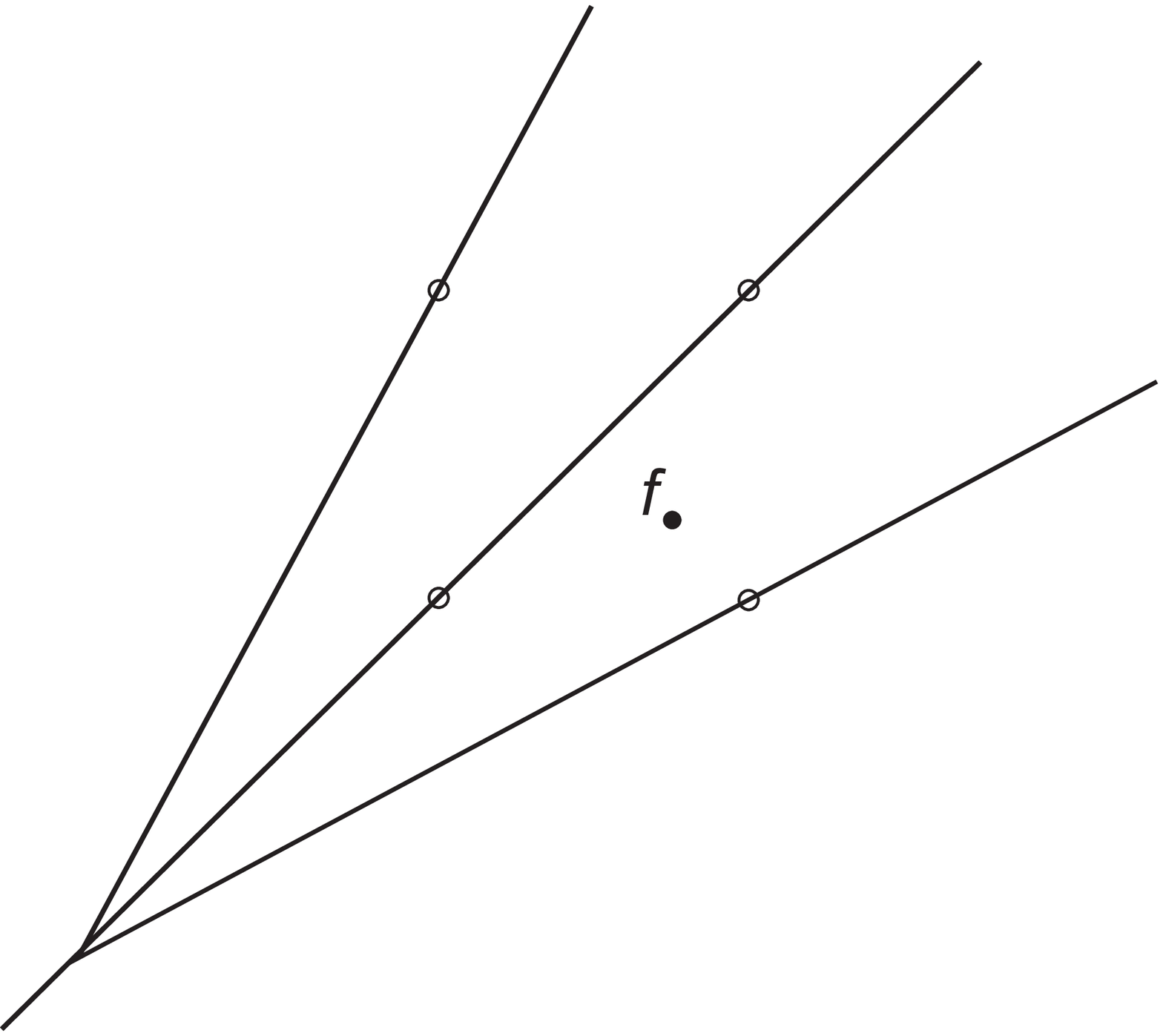}
\label{01parocta_cone_generic}
}
\subfigure[cone with each face containing one vertex of $K$]{
\includegraphics[width=40mm]{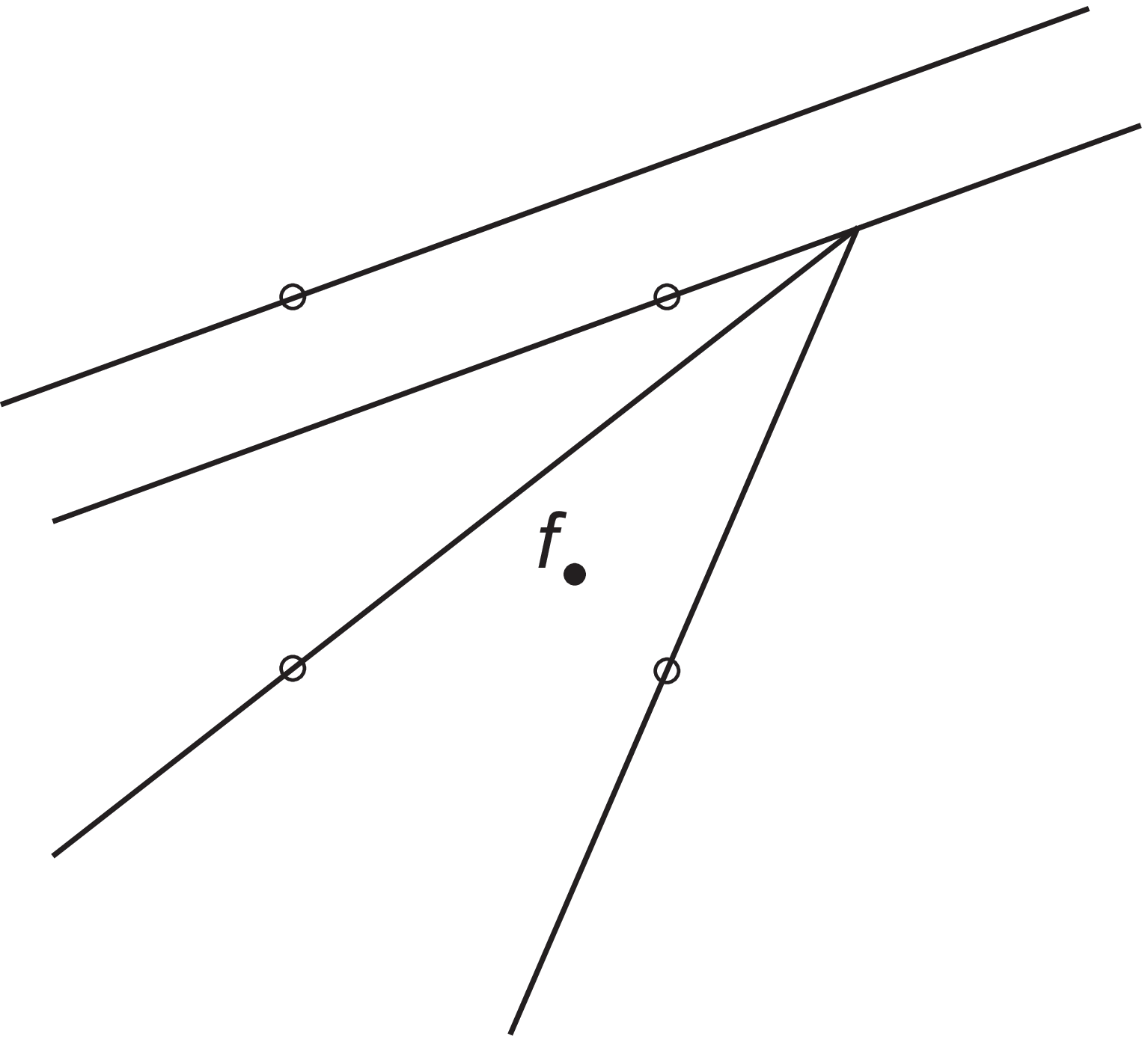}
\label{01parocta_cone_truncated}
}
\subfigure[truncated cone with each face containing one vertex of $K$]{
\includegraphics[width=40mm]{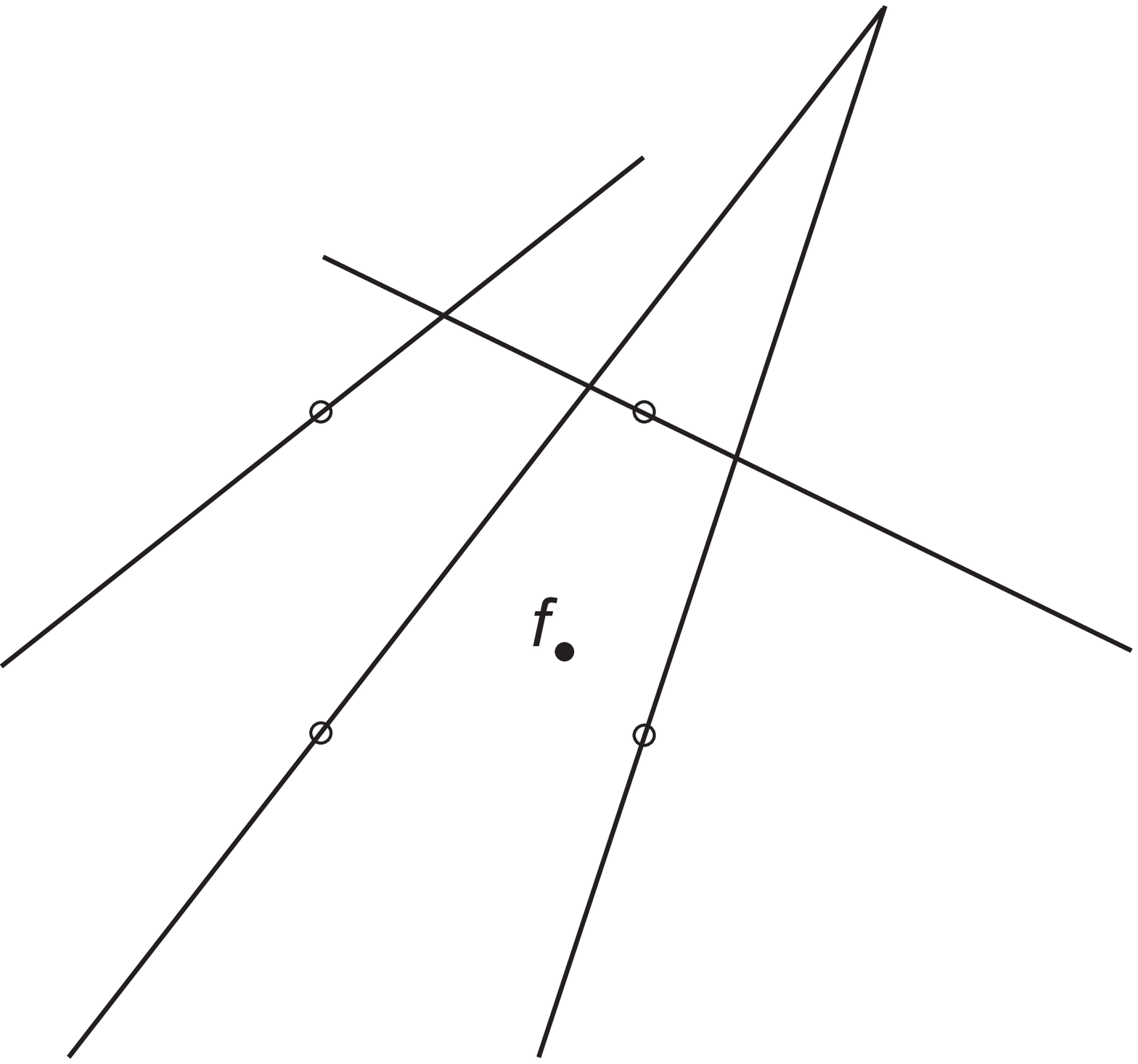}
\label{new_fig_f}
}
\\
\subfigure[tilted strip]{
\includegraphics[width=40mm]{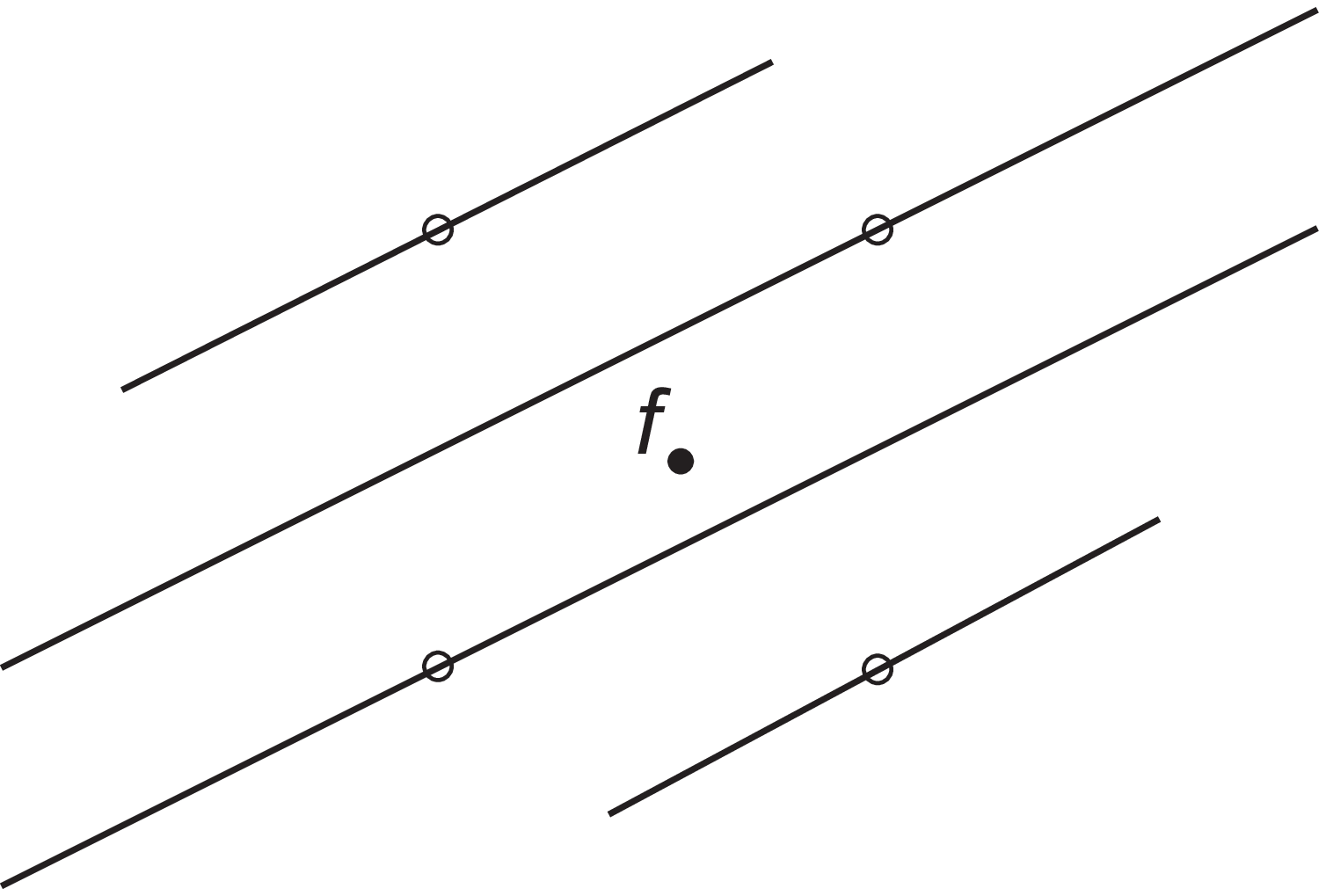}
\label{01parocta_split_truncated}
}
\subfigure[truncated tilted strip]{
\includegraphics[width=40mm]{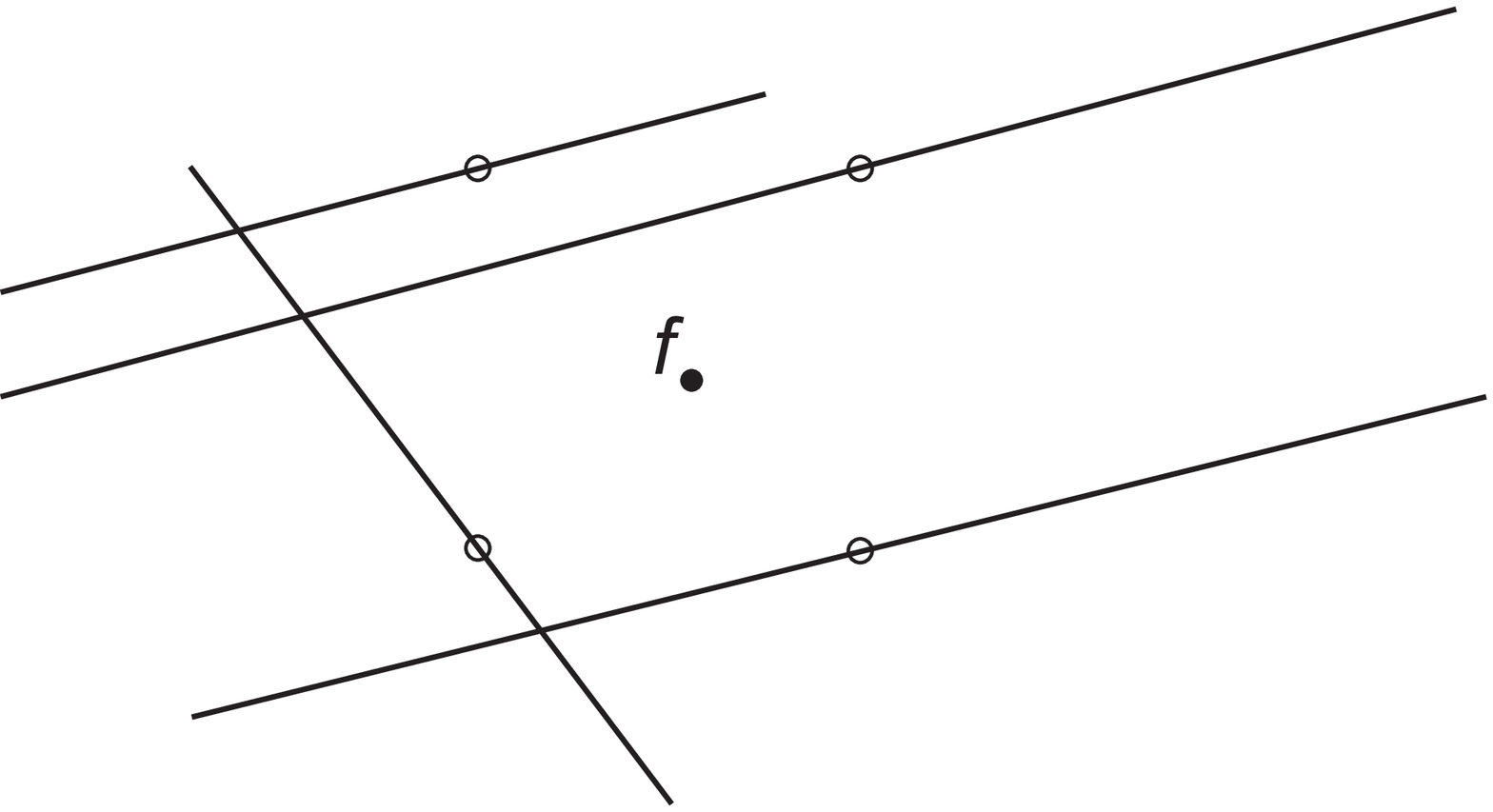}
\label{01parocta_gen_quad_notmaximal}
}
\end{tabular}
\label{01_additional_parocta_configurations}
\caption{Additional configurations of the parametric octahedron for the 0-1 case}
\end{figure}

\begin{example}
Consider the Andersen et al. \cite{ALWW} instance, amended with the condition $x_i\in\{0,1\},i\in \{1,2\}$:
\begin{equation}
\label{ALWW01_example}
\begin{array}{lllllllll}
x_1 & = \frac{1}{4} & + 2  s_1 & + 1  s_2 & - 3  s_3 &		& + 1  s_5\\[6pt]
x_2 & = \frac{1}{2} & + 1 s_1  & + 1  s_2 & + 2  s_3 & - 1  s_4 & - 2  s_5\\[6pt]
\multicolumn{4}{l}{x_1,x_2\in \{0,1\}, \ \ s\geq 0.}
\end{array}
\end{equation}
\end{example}

In section \ref{chapter2_section_case_2_dimensions} we listed the 5 cuts defining the facets of the disjunctive hull for this example, without the 0-1 condition. Using the stronger disjunction expressing the 0-1 condition we obtain the following 12 cuts that define the facets of $\convpde$.
\begin{enumerate}

\item Cut (type $S$): $  2.667 s_1+ 1.333 s_2+ 12 s_3+ 0 s_4+ 1.333 s_5\geq 1 $\\
      $v_1 = 4 ; \ v_2 = 1.333 ; \ v_3 = 1.333 ; \ v_4 = 4 $ \\
      $w_1 = 0 ; \ w_2 = 0 ; \ w_3 = 0 ; \ w_4 = 0 $

\item Cut (type $T_B$): $  2.667 s_1+ 1.333 s_2+ 4.889 s_3+ 0.8889 s_4+ 1.333 s_5\geq 1 $\\
      $v_1 = 2.222 ; \ v_2 = 1.333 ; \ v_3 = 1.333 ; \ v_4 = 0.4444 $ \\
      $w_1 = 0.8889 ; \ w_2 = 0 ; \ w_3 = 0 ; \ w_4 = 1.778 $

\item Cut (type $T_B$): $  2 s_1+ 2 s_2+ 4 s_3+ 1 s_4+ 1.714 s_5\geq 1 $\\
      $v_1 = 2 ; \ v_2 = 1.143 ; \ v_3 = 0 ; \ v_4 = 0 $ \\
      $w_1 = 1 ; \ w_2 = 0.2857 ; \ w_3 = 2 ; \ w_4 = 2 $

\item Cut (type $T_{C1}$): $  2.947 s_1+ 1.053 s_2+ 5.263 s_3+ 0.8421 s_4+ 3.579 s_5\geq 1 $\\
      $v_1 = 2.316 ; \ v_2 = 0.7719 ; \ v_3 = 1.895 ; \ v_4 = 0.6316 $ \\
      $w_1 = 0.8421 ; \ w_2 = 0.8421 ; \ w_3 = -0.8421 ; \ w_4 = 1.684 $

\item Cut (type $T_{C1}$): $  1.63 s_1+ 2.37 s_2+ 8.444 s_3+ 0.4444 s_4+ 1.926 s_5\geq 1 $\\
      $v_1 = 3.111 ; \ v_2 = 1.037 ; \ v_3 = -0.7407 ; \ v_4 = 2.222 $ \\
      $w_1 = 0.4444 ; \ w_2 = 0.4444 ; \ w_3 = 3.111 ; \ w_4 = 0.8889 $

\item Cut (type $T_{C2}$): $  4.364 s_1+ 2.545 s_2+ 3.273 s_3+ 1.091 s_4+ 0.3636 s_5\geq 1 $\\
      $v_1 = 1.818 ; \ v_2 = 1.818 ; \ v_3 = 1.818 ; \ v_4 = -0.3636 $ \\
      $w_1 = 1.091 ; \ w_2 = -0.7273 ; \ w_3 = 0.7273 ; \ w_4 = 2.182 $

\item Cut (type $T_{C2}$): $  3.765 s_1+ 3.059 s_2+ 2.588 s_3+ 1.176 s_4+ 0.7059 s_5\geq 1 $\\
      $v_1 = 1.647 ; \ v_2 = 1.647 ; \ v_3 = 0.7059 ; \ v_4 = -0.7059 $ \\
      $w_1 = 1.176 ; \ w_2 = -0.4706 ; \ w_3 = 2.353 ; \ w_4 = 2.353 $

\item Cut (type $C_A$): $  12 s_1+ 8 s_2+ 12 s_3+ 0 s_4  -4 s_5\geq 1 $\\
      $v_1 = 4 ; \ v_2 = 4 ; \ v_3 = 4 ; \ v_4 = 4 $ \\
      $w_1 = 0 ; \ w_2 = -4 ; \ w_3 = 4 ; \ w_4 = 0 $

\item Cut (type $C_B$): $  32 s_1+ 20 s_2  -20 s_3+ 4 s_4+ 12 s_5\geq 1 $\\
      $v_1 = -4 ; \ v_2 = 4 ; \ v_3 = 4 ; \ v_4 = -12 $ \\
      $w_1 = 4 ; \ w_2 = 4 ; \ w_3 = -4 ; \ w_4 = 8 $

\item Cut (type $C_B$): $  12 s_1+ 8 s_2+ 44 s_3  -4 s_4  -4 s_5\geq 1 $\\
      $v_1 = 12 ; \ v_2 = 4 ; \ v_3 = 4 ; \ v_4 = -4 $ \\
      $w_1 = -4 ; \ w_2 = -4 ; \ w_3 = 4 ; \ w_4 = 4 $

\item Cut (type $C_C$): $  -2 s_1+ 6 s_2+ 52 s_3+ 2 s_4+ 4 s_5\geq 1 $\\
      $v_1 = 0 ; \ v_2 = 0 ; \ v_3 = -8 ; \ v_4 = 8 $ \\
      $w_1 = 2 ; \ w_2 = 2 ; \ w_3 = 14 ; \ w_4 = -2 $

\item Cut (type $C_B$): $  8 s_1  -4 s_2+ 12 s_3+ 16 s_4+ 44 s_5\geq 1 $\\
      $v_1 = 4 ; \ v_2 = -9.333 ; \ v_3 = 12 ; \ v_4 = 4 $ \\
      $w_1 = 0 ; \ w_2 = 16 ; \ w_3 = -16 ; \ w_4 = 0 $

\end{enumerate}

The above list of 12 cuts includes 3 of the 5 cuts defining facets of $\convpd$, namely 1, 2 and 4, which appear on our list in position 3, 2 and 1, respectively. The remaining 2 facets of $\convpd$, given by cuts 3 and 5, are redundant for $\convpde$; namely, cut 3 is a convex combination of cuts 2, 3, 6 and 7 on our list, while cut 5 is a convex combination of cuts 1 and 5 on our list.

The number of facets of $\convpde$ substantially exceeds the number of facets of $\convpd$. In order to assess the impact of the two sets of cuts, we computed the average integrality gap for 1,000 randomly generated objective functions. Adding the 5 cuts valid for the 2-row MIP reduces this gap by 77\%; while adding the additional cuts valid for the 0-1 case reduces 100\% of the gap.

Next we discuss the strengthening of valid cuts for $\pde$ when some variables $\sj$ are integer-constrained. Let $J_1$ be the index set of such variables.

First of all, we observe that the standard modularization procedure described in Theorem~\ref{th7.2} for strengthening cuts for $\PD$ is not valid in the case of cuts for $\pde$. Indeed, Lemma~\ref{lem2.3.5} which underlies the correctness of the procedure in the case of $\PD$, is no longer valid in the case of $\pde$: if the disjunction (\ref{nonbasicdisjunction_integralnonbasics}) is modified by replacing every inequality with equality, then it is no longer equivalent to the disjunction obtained by replacing $\rij$ with $\rij - \mij$. Instead, we will use a different modularization, known in the literature under the name of monoidal strengthening \cite{BalasJeroslow}.

Consider a disjunction of the form
\begin{equation}\label{eq(1)}
\bigvee_{k \in Q} ( A^k x \ge a^k_0), \quad A^k = (a^k_j), \ j \in J, \quad a^k_j \in \R^m, \ j \in J \cup \{0\},
\end{equation}
and the valid cut $\al x \ge 1$, where
\begin{equation}\label{eq(2)}
\al_j = \max_{k \in Q} \{ \theta^k a^k_j / \theta^k a^k_0 \}
\end{equation}
for some $\thk \in \R^m_+$, $k \in Q$.

Suppose now that for each $\Ak x$, $k \in Q$, a lower bound  $\bko \le \ako$ is known, i.e. $\Ak x \ge \bko$, $k \in Q$.

\begin{theorem}\label{th8.1}
Let $M := \{ m \in \Z^{|Q|} : \sum_{k \in Q} m^k \ge 0\}$. If $x_j \in \Z$, $j \in J_1$, then the cut $\al x \ge 1$ can be strengthened to $\bal x \ge 1$, where
\begin{equation}\label{eq(3)}
\bal_j = \min_{m \in M} \max_{k \in Q} \left\{ \left( \thk \akj + \mkj \thk ( \ako - \bko)\right)/\thk \ako\right\} \quad\quad j \in J_1
\end{equation}
and $\bal_j = \al_j$ for $j \in J \sm J_1$.
\end{theorem}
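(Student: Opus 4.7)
The plan is to adapt the monoidal-strengthening technique of Balas--Jeroslow to the disjunctive setup of the theorem. The central observation is that, whenever $x_j \in \Z$ (and implicitly $x_j \ge 0$) for $j \in J_1$, one may shift the $j$-th coefficient of the $k$-th term of (\ref{eq(1)}) from $\akj$ to $\akj + \mkj(\ako - \bko)$ for any integer matrix $(\mkj)$ whose columns lie in the monoid $M$, and the resulting augmented disjunction is still satisfied by every integer-feasible $x$. Applying (\ref{eq(2)}) with the unchanged multipliers $\thk$ to this augmented disjunction yields, for each admissible choice, a valid cut whose $j$-th coefficient is exactly the expression inside the outer $\max$ in (\ref{eq(3)}); minimizing over $m$ coefficient by coefficient then delivers $\bal$.

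First I would establish the augmentation claim by contradiction. Suppose $x$ satisfies disjunct $k_*$ of (\ref{eq(1)}) yet violates every augmented inequality, i.e.\ for all $k \in Q$,
$$\Ak x + \sum_{j \in J_1} \mkj(\ako - \bko)x_j < \ako.$$
Using $A^{k_*}x \ge a^{k_*}_0$ in the $k_*$-th augmented inequality gives $\sum_{j \in J_1} m^{k_*}_j (a^{k_*}_0 - b^{k_*}_0)x_j < 0$, and dividing by the positive scalar $a^{k_*}_0 - b^{k_*}_0$ together with integrality of $x_j$ and $m^{k_*}_j$ yields $\sum_{j \in J_1} m^{k_*}_j x_j \le -1$. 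For $k \ne k_*$, the lower bound $\Ak x \ge \bko$ analogously yields $\sum_{j \in J_1} \mkj x_j \le 0$. Summing these $|Q|$ inequalities gives $\sum_{j \in J_1} x_j \bigl(\sum_{k \in Q} \mkj\bigr) \le -1$, contradicting $x_j \ge 0$ combined with the monoid condition $\sum_k \mkj \ge 0$. Next, (\ref{eq(2)}) applied to the augmented disjunction with the original $\thk$ produces the valid cut $\tilde\al x \ge 1$ whose coefficients are the argument of the $\max$ of (\ref{eq(3)}) for $j \in J_1$ and $\al_j$ for $j \in J \sm J_1$. Since the monoid constraint decouples across $j$ and the variables $x_j$ are non-negative, so that decreasing a coefficient only tightens the cut, one may pick for each $j \in J_1$ an optimizer $m_j^\ast \in M$ of the inner problem in (\ref{eq(3)}) independently, collect these columns into a single admissible shift matrix, and thereby obtain a single valid cut with the claimed coefficients $\bal_j$.

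The main obstacle will be the sharp integrality step in the contradiction: passing from the strict inequalities $\sum_j \mkj x_j < 1$ (for $k \ne k_*$) and $\sum_j m^{k_*}_j x_j < 0$ to the integer bounds $\le 0$ and $\le -1$ requires invoking \emph{simultaneously} positivity of $\ako - \bko$, integrality of both $\mkj$ and $x_j$, and non-negativity of $x_j$; only the conjunction of all three lets the telescoped sum defeat the monoid hypothesis $\sum_k \mkj \ge 0$. Degenerate cases with $\ako = \bko$ correspond to a disjunct redundantly implied by the lower bound and may be discarded, since the shifting term then vanishes. Once this contradiction is secured, the rest is routine: validity of the shifted family of cuts, separability across $j \in J_1$, and the coefficient-wise minimization that produces (\ref{eq(3)}) are all direct consequences of the disjunctive-cut machinery already developed earlier in the paper.
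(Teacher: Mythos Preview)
The paper does not supply a proof; it simply refers the reader to \cite{BalasJeroslow} and \cite{BalasDisjunctiveProgramming}. Your proposal reconstructs the Balas--Jeroslow monoidal argument and is structurally sound, so you are doing more than the paper itself does.

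There is, however, a technical slip in the contradiction step. In the setting of the theorem each $\ako$, $\bko$, $\akj$ lies in $\R^m$ (the $k$-th term of (\ref{eq(1)}) is a block of $m$ scalar inequalities), so ``the positive scalar $a^{k_*}_0 - b^{k_*}_0$'' is in fact a vector and the division is ill-defined as written. Relatedly, the negation ``for all $k\in Q$, $\Ak x + \cdots < \ako$'' is too strong: violating the $k$-th augmented term only means that \emph{some} one of its $m$ components fails, not all of them. The clean repair is to aggregate first and augment second: since $\thk \ge 0$, the disjunction $\bigvee_k(\Ak x \ge \ako)$ and the bounds $\Ak x \ge \bko$ yield the scalar disjunction $\bigvee_k(\thk\Ak x \ge \thk\ako)$ together with scalar lower bounds $\thk\Ak x \ge \thk\bko$, and it is this scalar system that one augments by adding $\mkj\,\thk(\ako-\bko)\,x_j$ to the $k$-th term. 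Now $\thk(\ako - \bko)$ is a genuine nonnegative scalar (strictly positive in the nondegenerate case, as you already note), and your contradiction argument---the integer rounding to $\le -1$ for $k_*$ and $\le 0$ for $k\ne k_*$, the summation over $k$, and the appeal to $x_j \ge 0$ together with $\sum_k \mkj \ge 0$---goes through verbatim. With this correction your proof is complete and coincides with the original Balas--Jeroslow argument.
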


\begin{proof}
See \cite{BalasJeroslow} or \cite{BalasDisjunctiveProgramming}.
\end{proof}

We will now apply this Theorem to our case, first with the disjunction (\ref{nonbasicdisjunction_integralnonbasics}), then with the stronger disjunction defining $\pde$. Let $\al x \ge 1$ be an inequality implied by the disjunction (\ref{nonbasicdisjunction_integralnonbasics}), i.e. a valid inequality for $\PD$, and let's assume that $x_i \in \{0,1\}$, $i = 1,2$. It is not hard to see that a lower bound on the lefthand side of each of the 8 inequalities that occur in (\ref{nonbasicdisjunction_integralnonbasics}) is obtained by subtracting 1 from the righthand side. This means that if $\ako$ denotes the righthand side and $\bko$ the lower bound on the lefthand side of the $k$-th term, then $\ako - \bko = {1 \choose 1}$.

Now the cut from the disjunction (\ref{nonbasicdisjunction_integralnonbasics}) is $\al s \ge 1$, where
$$\al_j = \max_{k \in \{1,\ldots,4\}} \{ \alkj \},
$$
and
\begin{equation}\label{eq(4)}
\begin{array}{rcl}
\alj^1 & = & - \roj \bv_1 - \rtj \bw_1,\\[6pt]
\alj^2 & = & ~~\,\roj \bv_2 - \rtj \bw_2,\\[6pt]
\alj^3 & = & ~~\, \roj \bv_3 + \rtj \bw_3,\\[6pt]
\alj^4 & = & - \roj \bv_4 + \rtj \bw_4.
\end{array}
\end{equation}
To apply the theorem to this case, notice that $\thk = ( \bv_k,\bw_k)$ and $\thk \akj = \alkj$, $\thk \ako = 1$ for $k = 1,\ldots,4$.

\begin{corollary}\label{co8.2}
Let $x_1,x_2 \in \{ 0,1\}$, and $M = \{ m \in \Z^4 : \sum^4_{k=1} m^k \ge 0\}$. Then $\bal s \ge 1$ is a valid cut for $\PD$, with $\bal_j = \max_{k \in \{ 1,\ldots,r \}} \{ \balkj \}$, and
$$\begin{array}{rcl}
\balkj & = & \left\{
\begin{array}{ll}\displaystyle
\min_{\mkj \in M} \max_{k \in \{ 1,\ldots,4 \}} \{ \alkj + \mkj (\bv_k + \bw_k)\} \quad & j \in J_1\\[12pt]
\alkj & j \in J \sm J_1
\end{array}
\right.
\end{array}
$$
\end{corollary}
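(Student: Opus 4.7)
The plan is to apply Theorem~\ref{th8.1} directly to the four-term disjunction (\ref{nonbasicdisjunction_integralnonbasics}), interpreting each of its four terms as a pair of inequalities $A^k s \ge a^k_0$ whose rows are $\pm r^i$ with right-hand sides $f_i$ or $1-f_i$ according to the sign convention of the term. As noted in the paragraph preceding Corollary~\ref{co8.2}, the disjunctive multipliers for term $k$ are $\thk=(\bv_k,\bw_k)$; the expressions in (\ref{eq(4)}) show that $\thk\akj=\alkj$, and the normalization constraints of (\ref{cglpmip_normalized}) give $\thk\ako=1$ for each $k\in\{1,\ldots,4\}$. Substituting these identities into (\ref{eq(3)}) already reduces the formula to $\bal_j=\min_{m\in M}\max_k\{\alkj+\mkj\,\thk(\ako-\bko)\}$, so the only remaining work is to evaluate $\ako-\bko$ component-by-component.

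The second step, which is the main substantive piece, is to produce valid component-wise lower bounds $\bko$ on $A^k s$ that are achievable over all $(x,s)\in\PL$ satisfying $x_1,x_2\in\{0,1\}$. Every row of every $A^k$ has the form $\pm r^i$, and from (\ref{2rowstableau}) we have $r^i s = x_i - f_i$; with $x_i\in\{0,1\}$ and $0<f_i<1$ the quantities $-r^i s=f_i-x_i$ and $r^i s=x_i-f_i$ attain minimum values $f_i-1$ and $-f_i$ respectively. Comparing these minima against the corresponding right-hand side entries $f_i$ and $1-f_i$ of $a^k_0$, every component of $a^k_0-b^k_0$ equals $1$, so $\ako-\bko=(1,1)^{\top}$ for each $k$. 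Consequently $\thk(\ako-\bko)=\bv_k+\bw_k$, and Theorem~\ref{th8.1} yields exactly the formula asserted for $\bal_j$ when $j\in J_1$, while $\bal_j=\al_j$ for $j\in J\setminus J_1$ is the trivial part of the theorem's conclusion.

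The only real obstacle is convincing oneself that these lower bounds are valid \emph{globally} — i.e.\ for all $(x,s)$ with $x\in\{0,1\}^2$, not just for points in a single term of the disjunction — since the monoidal strengthening of \cite{BalasJeroslow} requires the bounds to hold on the whole feasible set. This is immediate from the identity $r^i s=x_i-f_i$ combined with $x_i\in\{0,1\}$, which holds regardless of which term of (\ref{nonbasicdisjunction_integralnonbasics}) is active. Everything else is a mechanical substitution of the identified quantities into (\ref{eq(3)}).
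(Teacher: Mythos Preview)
Your proof is correct and follows essentially the same approach as the paper: both derive the lower bounds $\bko$ from the identity $r^i s = x_i - f_i$ with $x_i\in\{0,1\}$, conclude that each component of $\ako-\bko$ equals $1$, and then substitute $\thk(\ako-\bko)=\bv_k+\bw_k$ into the formula of Theorem~\ref{th8.1}. Your explicit remark that the bounds hold globally (not just termwise) is a welcome clarification that the paper leaves implicit.
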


\begin{proof}
Denoting
\begin{equation}\label{eq(5)}
\begin{array}{llll}
\displaystyle\ao^1 = {f_1 \choose f_2}, \quad & \displaystyle\ao^2 = {1-f_1 \choose f_2}, \quad & \displaystyle\ao^3 = {1-f_1 \choose 1-f_2}, \quad & \displaystyle\ao^4 = {f_1 \choose 1-f_2},\\[18pt]
\displaystyle \bo^1 = {f_1 - 1 \choose f_2-1}, & \displaystyle\bo^2 = {-f_1 \choose f_2-1}, & \displaystyle\bo^3 = {-f_1 \choose -f_2}, & \displaystyle\bo^4 = {f_1-1 \choose -f_2},
\end{array}
\end{equation}
it is easy to see that for $k = 1,\ldots,4$,
$$(\bv_k , \bw_k)(\ako - \bko) = \bv_k + \bw_k.
$$
\end{proof}

We now turn to strengthening a valid inequality for $\pde$, the set defined by the disjunction (\ref{nonbasicdisjunction_integralnonbasics}$^=$), obtained from (\ref{nonbasicdisjunction_integralnonbasics}) by replacing each inequality with equality. In this case the cut from (\ref{nonbasicdisjunction_integralnonbasics}$^=$) is $\tal x \ge 1$, where $\tal_j = \max_{k \in \{ 1,\ldots,4 \}} \tal^k_j$ and the $\tal^k_j$ are given by the same expressions (\ref{eq(4)}) as $\alkj$, with the important difference that the parameters $(\bv_k,\bw_k)$ are unrestricted in sign. However, from the normalization constraints (\ref{cglpmip_normalized}) it follows that for any $k \in \{ 1,\ldots,4 \}$, at most one member of the pair $(\bv_k,\bw_k)$ can be negative.

In order to derive the lower bounds $\bko$ required by Theorem~\ref{th8.1}, the best way is to represent each equation of (\ref{nonbasicdisjunction_integralnonbasics}$^=$) as a pair of inequalities; i.e. the first term of (\ref{nonbasicdisjunction_integralnonbasics}$^=$) is restated as
\begin{equation}\label{eq(6)}
\left(
\begin{array}{rcl}
-r^1s & \ge & ~~\,f_1\\[6pt]
r^1s & \ge & -f_1\\[6pt]
-r^2s & \ge & ~~\,f_2\\[6pt]
r^2s & \ge & -f_2
\end{array}
\right)
\end{equation}
and so on. Denoting the corresponding parameters or multipliers by $\vk', \vk'', \wk', \wk''$ for $k = 1,\ldots,4$, we see that since at most one of the pairs of inequalities corresponding to an equation can be active in any given solution, at most one member of each pair $(\vk', \vk'')$ can be positive, and the same holds for each pair $(\wk',\wk'')$. Furthermore, it becomes clear that if in the equality formulation \sevene\ a parameter, say $v_1$, takes on a negative value $\bv_1 < 0$ in a solution, this corresponds to the fact that the member of the pair of inequalities corresponding to the equation associated with $v_1$ that is active, is the one with $\le$, i.e. with the inequality reversed.

\begin{corollary}\label{co8.3}
Let $M = \{ m \in \Z^4 : \sum^4_{k=1} m^k \ge 0\}$. Then $\hal s \ge 1$ is a valid cut for $\pde$, with $\hal_j = \max_{k \in \{ 1,\ldots,4 \}} \{ \hal^k_j \}$, and
$$\begin{array}{rcl}
\hal^k_j & = & \left\{
    \begin{array}{ll}
    \displaystyle\min_{\mkj \in M} \max_{k \in \{ 1,\ldots,r \}} \{ \tal^k_j + m^k_j ( \bv^+_k + \bw^+_k)\}, \quad \quad & j \in J_1\\[12pt]
    \tal^k_j & j \in J \sm J_1
    \end{array}
\right.
\end{array}
$$
where $\bvkp = \max \{ \bvk,0\}$ and $\bwkp = \max \{ \bwk,0\}$.
\end{corollary}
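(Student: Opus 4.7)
The plan is to reduce Corollary~\ref{co8.3} to Theorem~\ref{th8.1} by re-expressing each equation in \sevene\ as a pair of opposite inequalities, computing the corresponding lower bounds on their left-hand sides (over the feasible set of $\pde$), and then applying the monoidal strengthening formula directly. The key observation that needs to be isolated is that only one inequality in each pair ever has a nonzero slack $a^k_0-b^k_0$, which is exactly the mechanism that forces the plus-parts $\bvkp$ and $\bwkp$ to appear in place of $\bvk$ and $\bwk$.

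First I would rewrite the $k$-th term of \sevene\ as four inequalities: $-r^1 s \ge a$, $r^1 s \ge -a$, $-r^2 s \ge b$, $r^2 s \ge -b$, where $a,b$ are the appropriate values among $f_i$ and $-(1-f_i)$. Letting $(v'_k, v''_k, w'_k, w''_k)\ge 0$ be the multipliers attached to these four rows, complementary slackness at an active equation forces at most one of each pair $(v'_k,v''_k)$ and $(w'_k,w''_k)$ to be positive. Thus the signed quantities $\bvk = v'_k - v''_k$ and $\bwk = w'_k - w''_k$ appearing in the equality formulation (\ref{cglpmip_normalized})-with-equations recover exactly the multipliers in $\tal^k_j$. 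The case $\bvk\ge 0$ corresponds to $v'_k=\bvk=\bvkp$, $v''_k=0$, and the case $\bvk<0$ to $v'_k=0$, $v''_k=-\bvk$; in either case the positive part $\bvkp$ records the multiplier on the \emph{original} direction inequality.

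Next I would compute the lower bounds. Since $x_i \in \{0,1\}$ forces $r^i s = x_i - f_i \in \{-f_i, 1-f_i\}$, one gets $-r^i s \ge f_i - 1$ and $r^i s \ge -f_i$ globally. Thus, writing each pair as $(A^kx\ge a^k_0)$ and $(-A^kx\ge -a^k_0)$ with lower bounds $b^k_0$, the slack $a^k_0-b^k_0$ equals $1$ for the ``original'' direction and $0$ for the ``reversed'' direction (e.g.\ for the first term, $-r^1 s\ge f_1$ has $a-b=f_1-(f_1-1)=1$, while $r^1 s\ge -f_1$ has $a-b=-f_1-(-f_1)=0$). Applying Theorem~\ref{th8.1} with the $\twoq$ (doubled) disjunction, the monoidal strengthening term for the $k$-th original row contributes $v'_k\cdot 1 + w'_k\cdot 1 = \bvkp + \bwkp$, while the ``reversed'' rows contribute $v''_k\cdot 0 + w''_k\cdot 0 = 0$. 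Hence for $j\in J_1$ the strengthened coefficients take the stated form
\[
\hal^k_j \;=\; \min_{\mkj\in M}\,\max_{k}\;\bigl\{\tal^k_j + \mkj(\bvkp + \bwkp)\bigr\},
\]
with $M=\{m\in\Z^4:\sum_k m^k \ge 0\}$ being the same monoid as in Corollary~\ref{co8.2}, inherited unchanged from Theorem~\ref{th8.1}. For $j\in J\sm J_1$ no strengthening is performed, so $\hal_j=\tal_j$.

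The only delicate point, and the one I would spell out carefully, is step two: justifying that for the reversed inequality the slack $a^k_0 - b^k_0$ is genuinely $0$ (so that the negative part of $\bvk$ truly drops out of the monoidal correction), and that the doubled monoid $\Z^8$ can be collapsed back to $M\subset\Z^4$ because the reversed-row multipliers contribute a zero column to the strengthening. Everything else is a direct specialization of Theorem~\ref{th8.1} mirroring the derivation of Corollary~\ref{co8.2}.
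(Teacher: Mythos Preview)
Your proposal is correct and follows essentially the same route as the paper: both split each equation of \sevene\ into a pair of opposite inequalities, compute the resulting righthand sides $\ta^k_0$ and lower bounds $\tb^k_0$, observe that $\ta^k_0-\tb^k_0=(1,0,1,0)^\top$ for every $k$, and conclude that $\theta^k(\ta^k_0-\tb^k_0)=\bvkp+\bwkp$. One small wrinkle: your worry about ``collapsing a doubled monoid $\Z^8$ back to $M\subset\Z^4$'' is unnecessary, because in Theorem~\ref{th8.1} the monoid $M$ is indexed by the \emph{terms} $k\in Q$ of the disjunction, not by the individual rows inside each term; doubling the rows within each term leaves $|Q|=4$, so $M\subset\Z^4$ never changes and there is nothing to collapse.
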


\begin{proof}
If, using the inequality formulation (\ref{eq(6)}) of the disjunction \sevene, we denote the righthand sides of the four terms by
\begin{equation}\label{eq(7)}
\ta^1_0 = \left(
    \begin{array}{c}
    ~~\,\fo \\ -\fo \\ ~~\,\ft \\ -\ft
    \end{array}
    \right), \quad
\ta^2_0 = \left(
    \begin{array}{c}
    1-\fo \\ \fo - 1 \\ ~~\,\ft \\ -\ft
    \end{array}
    \right), \quad
\ta^3_0 = \left(
    \begin{array}{c}
    1-\fo \\ \fo - 1 \\ 1-\ft \\ \ft-1
    \end{array}
    \right), \quad
\ta^4_0 = \left(
    \begin{array}{c}
    ~~\,\fo \\ -\ft \\ 1-\ft \\ \ft-1
    \end{array}
    \right),
\end{equation}
then the lower bounds on the expressions on the lefthand sides of the inequalities are no longer equal to the righthand side minus 1. Instead, we have the following situation:
\begin{equation}\label{eq(9)}
\tb^1_0 = \left(
    \begin{array}{c}
    \fo - 1 \\ -\fo \\ \ft-1 \\ -\ft
    \end{array}
    \right), \quad
\tb^2_0 = \left(
    \begin{array}{c}
    -\fo \\ \fo - 1 \\ \ft - 1 \\ -\ft
    \end{array}
    \right), \quad
\tb^3_0 = \left(
    \begin{array}{c}
    -\fo \\ \fo - 1 \\ -\ft \\ \ft-1
    \end{array}
    \right), \quad
\tb^4_0 = \left(
    \begin{array}{c}
    \fo - 1 \\ -\fo \\ -\ft \\ \ft-1
    \end{array}
    \right).
\end{equation}
As a consequence,
$$\ta^k_0 - \tb^k_0 = \left( \begin{array}{c} 1 \\ 0 \\ 1 \\ 0\end{array}\right) \quad\mbox{ for } k = 1,\ldots,4
$$
Thus, if we denote $\bvkp : = \max \{ \bvk,0 \}$, $\bwkp = \max \{ \bwk,0 \}$, we have $(\bvkp,\bwkp)(\ako,\bko) = (\bvkp,\bwkp)$, $k= 1,\ldots,4$, and the expression for $\hal^k_j$ follows.
\end{proof}

Finding the optimal $\mkj \in M$ requires a small (single digit) number of comparisons. While \cite{BalasJeroslow} and \cite{BalasDisjunctiveProgramming} give simple procedures for the case of a general disjunction, the optimal $\mkj$ of Corollary~\ref{co8.3} for a given $j \in J_1$ can be found as follows:
\begin{itemize}
\item Start with $\mkj = 0$ for all $k$ and apply the \\{\em Iterative Step}.

    \begin{itemize}
    \item[{\small $\bullet$}] Find $\almaxj = \max_k \alkj$, $\alminj = \min_k \alkj$ and let $\mjmax, \mjmin$ be the corresponding values of $\mkj$.

    \item[{\small $\bullet$}] Set $\mjmax = \mjmax - t$, $\mjmin = \mjmin + t$, where $t$ is the smallest positive integer for which the identity of $\almaxj$ changes.

    \item[{\small $\bullet$}] If the value of $\max_k \alkj$ has not been reduced, stop with $\mjmax = \mjmax - t + 1$, $\mjmin = \mjmin + t - 1$, and $\mkj$ unchanged for $k \ne \max, \min$. Otherwise repeat.
    \end{itemize}
\end{itemize}

In the case where \pocta\ is a triangle with each face containing exactly one vertex of $K$, the term of the disjunction \sevene\ corresponding to the vertex of $K$ left outside the triangle plays no role in defining the cut, hence it can be dropped and the strengthening becomes simpler. This is even more true of the case of a cone, where only two terms of the disjunction are active. A particularly simple case is that of a ``fixed shape'' cone with apex at a vertex of $K$, and one face containing a side of $K$, the other face containing the diagonal of $K$. There are eight such cones, and every fractional $(\fo,\ft) \ne (\frac{1}{2},\frac{1}{2})$ (i.e. not lying on the diagonal of $K$) is strictly contained in four of them (see figure~\ref{fixed_cones_figure} in the next section).

We will illustrate the monoidal strengthening procedure on the conic cuts obtainable from these disjunctions. Here is a couple of them:
\begin{enumerate}
\item $(-x_2 \ge 0) \vee (-x_1 + x_2 \ge 0)$
\item $(x_2 \ge 1) \vee (-x_1 - x_2 \ge -1)$
\end{enumerate}
or, after substituting $f_i + r^is$ for $x_i$, $i = 1,2$,
\begin{enumerate}
\item $(-r^2s \ge f_2) \vee (( - r^1 + r^2)s \ge \fo - \ft)$
\item $(r^2s \ge 1-\ft) \vee ((-r^1 - r^2)s \ge \fo + \ft - 1)$.
\end{enumerate}
Each disjunction violated by the point $(\fo,\ft)$ has positive righthand sides and gives rise to a valid cut $\al s \ge 1$, with coefficients $\al_j$ shown below, obtained by using multipliers normalized to yield a cut with a righthand side of 1:
\begin{enumerate}
\item $\max \left\{ \frac{-r^2_j}{\ft}, \frac{-r^1_j + r^2_j}{\fo - \ft}\right\}$
\item $\max \left\{ \frac{r^2_j}{1-\ft}, \frac{-r^1_j - r^2_j}{\fo + \ft - 1}\right\}$
\end{enumerate}

To apply the strengthening procedure, we note that for each of the 16 terms of the above 8 disjunctions, the lower bound on the lefthand side of the inequality is just 1 unit less than the righthand side, hence the difference between the latter and the former is exactly 1. Further, the weights $(\bv_k,\bw_k)$ are normalized so that $\bvk + \bwk = 1$, $k = 1,2$. The resulting strengthened coefficients for the above illustration are
\begin{enumerate}
\item $\min_{\mkj \in M} \max \left\{ \frac{-r^1_j + m^1_j}{\ft}, \frac{r^1_j - r^2_j + m^2_j}{\fo - \ft} \right\}$
\item $\min_{\mkj \in M} \max \left\{ \frac{r^2_j + m^1_j}{1-\ft}, \frac{-r^1_j - r^2_j + m^2_j}{\fo + \ft - 1}\right\}$
\end{enumerate}

\section{Computational Experiments}\label{chapter2_section_computational_results}

In this section we present computational experiments with cuts derived from fixed configurations of the parametric octahedron. We assess the strength of the cuts by analyzing the gap closed on instances from MIPLIB3\_C\_V2 \cite{MIPLIB3CV2} when used in combination with standard Gomory cuts. MIPLIB3\_C\_V2 is a collection of 68 instances by Margot which are slight variations of the standard MIPLIB3 \cite{MIPLIB3} and for which the validity of a candidate solution can be checked in finite precision arithmetic.
We restricted the collection to a subset of 41 instances. The considered instances are such that they contain at least 2 binary variables fractional in the optimal LP solution and the cut generation procedure on each round takes less than 3600 seconds.

We generated the following two families of cuts
\begin{itemize}
\item Cuts from 4 Triangles $T_A$ (shown in Figure \ref{fixed_t1t_figure}) whose vertices, expressed in terms of their $x_1,x_2$ coordinates, are:
  \begin{itemize}
  \item $(0,0);(2,0);(0,2)$
  \item $(-1,0);(1,0);(1,2)$
  \item $(0,-1);(2,1);(0,1)$
  \item $(1,-1);(1,1);(-1,1)$
  \end{itemize}
\item Cuts from 4 of the 8 cones of type $C_A$ (shown in Figure \ref{fixed_cones_figure}):
  \begin{itemize}
  \item apex at $(0,0)$ and rays $(1,0),(1,1)$
  \item apex at $(0,0)$ and rays $(0,1),(1,1)$
  \item apex at $(0,1)$ and rays $(1,0),(1,-1)$
  \item apex at $(0,1)$ and rays $(0,-1),(1,-1)$
  \item apex at $(1,1)$ and rays $(-1,0),(-1,-1)$
  \item apex at $(1,1)$ and rays $(0,-1),(-1,-1)$
  \item apex at $(1,0)$ and rays $(-1,0),(-1,1)$
  \item apex at $(1,0)$ and rays $(0,1),(-1,1)$
  \end{itemize}
\end{itemize}
The reason we only used 4 of these 8 cones is that every $(\fo,\ft)$-pair is contained in 4 of these 8 cones.

\begin{figure}[ht!]
\centering
\includegraphics[width=90mm]{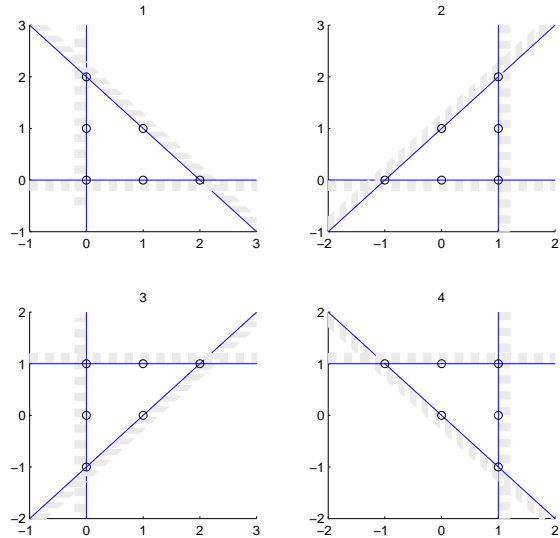}
\caption{Fixed shape Triangles $T_A$}
\label{fixed_t1t_figure}
\end{figure}
\begin{figure}[ht!]
\centering
\includegraphics[width=160mm]{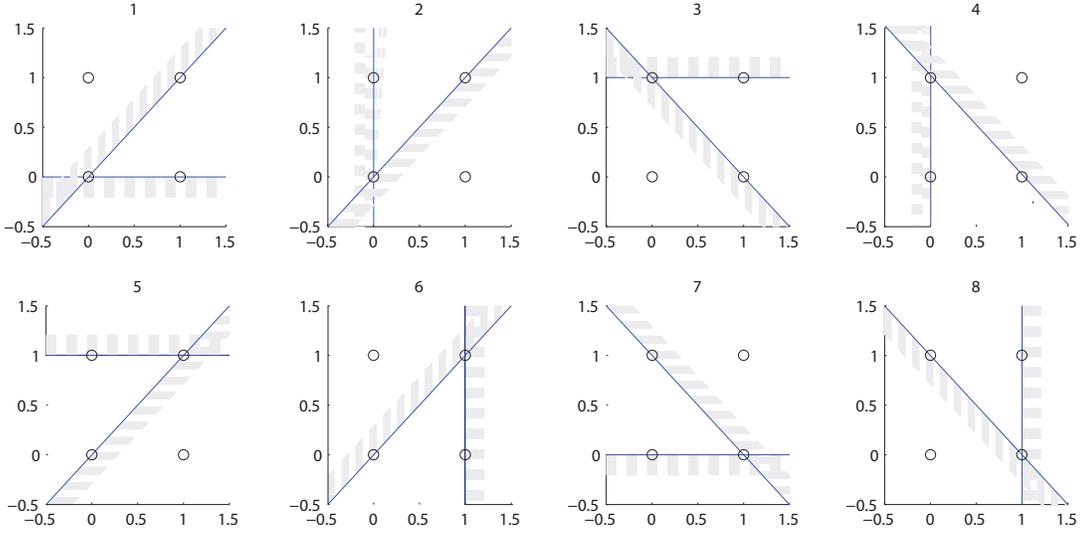}
\caption{Fixed shape cones $C_A$}
\label{fixed_cones_figure}
\end{figure}

For each instance, we first solved the linear programming relaxation and generated a round of Gomory mixed integer (GMI) cuts, a round being one cut from every row of the optimal simplex tableau associated with a binary basic variable with a fractional value. We then generated from each pair of rows with at least one fractional binary basic variable either (a) all cuts from the 4 triangles $T_A$, or (b) all cuts from 4 of the 8 cones $C_A$ or both, and strengthened them via standard modularization (in case (a)) or monoidal strengthening (in case (b)).

We call this cut generating cycle a {\em round}. At the end of each round, we reoptimized the resulting linear program and removed all cuts that were not tight at the optimum. We generated up to 5 rounds of cuts for each instance. A statement of our routine follows.

\begin{codebox}
\Procname{$\proc{\textit{Cut Generating Procedure}}(r,f)$}
\li Solve LP relaxation $P$
\li \For $k \gets 1$ {\bf up to} $5$
\li \Do
\li      Initialize cut collection $C \gets$ empty
\li      \For each binary basic $x_i$ fractional in the \\ \quad\quad \quad\quad current solution
\li      \Do
\li           Compute Gomory cut $G_i$
\li           $C \gets C \cup G_i$
         \End
\li      \For each binary basic pair $x_i,x_j$ with at \\ \quad\quad\quad\quad  least one fractional in the current solution
\li      \Do
\li          \If GenerateTriangles$==$true
\li          \Then generate the cuts $T^1_{ij},\dots,T^4_{ij}$ from each of the 4 triangles of \\ \quad\quad \quad\quad\quad\quad\quad\quad\ \,  $T_A$ that contain the fractional solution in their interior
\li		  \If StrengthenCuts$==$true
\li		  \Then strengthen the cuts $T^1_{ij},\dots,T^4_{ij}$ via standard modularization
                  \End
\li          $C \gets C \cup T^1_{ij},\dots,T^4_{ij}$
	     \End
\li          \If GenerateCones$==$true
\li          \Then generate the cuts $K^1_{ij},\dots,K^8_{ij}$ from each of the 8 cones $C_A$ that \\ \quad\quad \quad\quad\quad\quad\quad\quad\ \,   contain the fractional solution in their interior
\li		  \If StrengthenCuts$==$true
\li		  \Then strengthen the cuts $K^1_{ij},\dots,K^8_{ij}$ via monoidal cut \\ \quad\quad \quad\quad\quad\quad \quad\quad \quad\quad\quad\quad\,   strengthening
                  \End
\li          $C \gets C \cup K_1,\dots,K^8_{ij}$
	     \End
         \End
\li Resolve $P$ and get new solution $\bar{x}^k$ with value $\overline{opt}^k$
\li Remove from $P$ the cuts in $C$ that are not tight at $\bar{x}^k$
    \End
\end{codebox}

The Gomory mixed integer (GMI) cut generator we used is the \emph{CglGomory} routine of the Cgl package of COIN-OR \cite{COIN-OR}. Tables~\ref{tab9.1}--\ref{tab9.4} summarize the results of our experiments with these cuts. Table~\ref{tab9.1} shows the outcome of applying all three types of cuts in the above described manner, with strengthening, for one round. Column 1 lists the 41 test instances mentioned above. Column 2 shows the percentage of the integrality gap closed by one round of GMI cuts, while the next two columns show the number of cuts generated and added to the LP relaxation, along with the number of cuts deleted after reoptimization as nonbinding. The next three columns show the same data (i.e. percentage of gap closed and number of cuts added, respectively deleted) after generating a cut from each of the 4 triangles $T_A$ associated with every pair of basic 0-1 variables with at least one fractional member, and a cut from each of the 8 cones associated with every such pair, provided the cone contains such a pair in its interior. Finally, the last column shows the percentage improvement in the integrality gap closed by all three types of cuts versus the GMI cuts alone.

As the table shows, the integrality gap closed, which is 19.49\% in the case of the GMI cuts, reaches 29.06\% when the two remaining types of cuts are added, an increase of 49.14\%. The number of triangle cuts and conical cuts generated is of course much larger than that of GMI cuts. While the latter is bounded by the number of basic 0-1 variables fractional at the optimum, in case of the other two types of cuts this number gets multiplied by 8 times the number of basic 0-1 variables, fractional or not. From the table it is clear that after reoptimization few of the added cuts remain active (about 2\%), while the rest get removed. The table also reveals marked differences in the impact of the 2-row cuts on different instances, from 0 impact in about 40\% of the instances, to a more than 7-fold increase of the gap closed in the highest-impact case.

Tables~\ref{tab9.2}--\ref{tab9.3} show the effect of using only triangle cuts or only conic cuts on top of the GMI cuts. Clearly, the joint effect of using both types of cuts is substantially stronger than is the case with a single type.

Finally, Table~\ref{tab9.4} shows the effect of generating both types of 2-row cuts on top of GMI cuts, as in Table~\ref{tab9.1}, but this time for 5 rounds instead of just 1. The improvement in gap closing keeps growing after every round. At the end of the 5 rounds, the gap closed is 38.78\%, roughly twice as large as the 19.49\% gap closure obtained by 1 round of GMI cuts.\clearpage

\setcounter{table}{0}
\begin{small}
\begin{table}
\caption{GMI cuts  + Triangle cuts + Conic cuts,  all strengthened, 1 round}
\label{tab9.1}
\begin{center}
\footnotesize
\begin{tabular}{ l | r r r D D | r r r D D | r}	\cline{2-11}
  & \multicolumn{5}{c|}{GMI} & \multicolumn{5}{c|}{GMI+ $T_A$ + $C_A$, strengthened} &  \\\cline{2-11}
  & \multicolumn{1}{c}{Gap} & \multicolumn{1}{c}{Cuts} & \multicolumn{1}{c}{Cuts} & &  & \multicolumn{1}{c}{Gap} & \multicolumn{1}{c}{Cuts} & \multicolumn{1}{c}{Cuts} & & & \\
  & \multicolumn{1}{c}{closed} & \multicolumn{1}{c}{added} & \multicolumn{1}{c}{deleted} & &  & \multicolumn{1}{c}{closed} & \multicolumn{1}{c}{added} & \multicolumn{1}{c}{deleted} & & & Improvement \\
Instance  & \multicolumn{1}{c}{\%} & \multicolumn{1}{c}{\#} & \multicolumn{1}{c}{\#} & &  & \multicolumn{1}{c}{\%} & \multicolumn{1}{c}{\#} & \multicolumn{1}{c}{\#} & & & \multicolumn{1}{c}{\%}\\\hline
air03	& 100	& 36	& 10	& 0.06	& 0.13	& 100	& 12744	& 12667	& 40.11	& 96.23	& 0.00 \\
cap6000	& 41.65	& 2	& 1	& 0.02	& 0.03	& 41.65	& 1360	& 1359	& 2.33	& 7.2	& 0.00 \\
danoint	& 0.26	& 24	& 13	& 0.02	& 0.02	& 0.26	& 24	& 13	& 0.02	& 0.53	& 0.00 \\
dcmulti	& 45.75	& 49	& 12	& 0.01	& 0.01	& 45.86	& 392	& 362	& 0.04	& 0.32	& 0.24 \\
egout	& 21.84	& 16	& 0	& 0	& 0	& 60.91	& 3461	& 3423	& 0.02	& 0.06	& 178.89 \\
enigma	& 100	& 6	& 5	& 0	& 0	& 100	& 341	& 340	& 0.02	& 0.03	& 0.00 \\
fiber	& 53.32	& 39	& 24	& 0.01	& 0.01	& 64.97	& 22944	& 22881	& 0.88	& 2.86	& 21.85 \\
fixnet3	& 6.62	& 6	& 0	& 0	& 0	& 56.19	& 2925	& 2827	& 0.21	& 0.39	& 748.79 \\
fixnet4	& 4.79	& 6	& 0	& 0	& 0	& 13.02	& 2829	& 2731	& 0.24	& 0.4	& 171.82 \\
fixnet6	& 3.98	& 6	& 0	& 0	& 0	& 13.03	& 2502	& 2388	& 0.23	& 0.38	& 227.39 \\
khb05250	& 74.91	& 19	& 0	& 0	& 0	& 84.21	& 1435	& 1408	& 0.09	& 0.2	& 12.41 \\
l152lav	& 0	& 0	& 0	& 0	& 0.01	& 26.68	& 16774	& 16684	& 10.32	& 27.25	& 0.00 \\
lseu	& 55.94	& 12	& 7	& 0	& 0	& 56.59	& 567	& 562	& 0.01	& 0.02	& 1.16 \\
markshare1	& 0	& 6	& 3	& 0	& 0	& 0	& 124	& 110	& 0.01	& 0.01	& 0.00 \\
markshare2	& 0	& 7	& 3	& 0	& 0	& 0	& 173	& 147	& 0.02	& 0.03	& 0.00 \\
mas74	& 6.52	& 9	& 0	& 0	& 0	& 7.57	& 511	& 485	& 0.03	& 0.09	& 16.10 \\
mas76	& 6.36	& 9	& 1	& 0	& 0	& 7.7	& 433	& 412	& 0.03	& 0.08	& 21.07 \\
misc03	& 8.62	& 20	& 17	& 0	& 0	& 8.62	& 2239	& 2231	& 0.09	& 0.24	& 0.00 \\
misc06	& 26.17	& 8	& 0	& 0	& 0	& 26.17	& 8	& 0	& 0.01	& 0.06	& 0.00 \\
misc07	& 0	& 28	& 25	& 0	& 0	& 0.72	& 4177	& 4171	& 0.32	& 0.8	& 0.00 \\
mod008	& 20.1	& 4	& 1	& 0	& 0	& 20.27	& 96	& 90	& 0.02	& 0.04	& 0.85 \\
mod010	& 0	& 0	& 0	& 0	& 0.01	& 99.26	& 14211	& 14070	& 11.45	& 27.76	& 0.00 \\
mod011	& 11.44	& 8	& 1	& 0.14	& 0.14	& 32.81	& 855	& 171	& 6.47	& 7.85	& 186.80 \\
modglob	& 13.32	& 16	& 2	& 0	& 0	& 16.26	& 137	& 40	& 0.05	& 0.08	& 22.07 \\
p0033	& 12.6	& 5	& 1	& 0	& 0	& 57.04	& 174	& 168	& 0	& 0	& 352.70 \\
p0201	& 16.89	& 20	& 13	& 0	& 0	& 19.31	& 1933	& 1929	& 0.1	& 0.21	& 14.33 \\
p0282	& 3.47	& 24	& 17	& 0	& 0	& 6.2	& 2837	& 2829	& 0.05	& 0.15	& 78.67 \\
p0548	& 3.06	& 19	& 2	& 0	& 0.01	& 18.53	& 5584	& 5521	& 0.2	& 0.42	& 505.56 \\
p2756	& 0.21	& 7	& 1	& 0.01	& 0.01	& 0.56	& 908	& 884	& 0.16	& 0.3	& 166.67 \\
pk1	& 0	& 15	& 5	& 0	& 0	& 0	& 22	& 12	& 0	& 0.04	& 0.00 \\
pp08a	& 54.3	& 50	& 0	& 0	& 0	& 65.29	& 7825	& 7732	& 0.09	& 0.19	& 20.24 \\
pp08aCUTS	& 32.83	& 40	& 0	& 0	& 0	& 41.18	& 2579	& 2513	& 0.11	& 0.31	& 25.43 \\
qiu	& 0.33	& 36	& 24	& 0.16	& 0.16	& 0.33	& 36	& 24	& 0.16	& 0.57	& 0.00 \\
rentacar	& 0	& 2	& 0	& 0.02	& 0.03	& 0	& 7	& 5	& 0.13	& 0.51	& 0.00 \\
rgn	& 3.15	& 17	& 9	& 0	& 0	& 3.15	& 1341	& 1331	& 0.03	& 0.08	& 0.00 \\
set1ch	& 30.36	& 125	& 1	& 0.01	& 0.01	& 44.51	& 28347	& 28111	& 0.42	& 1.86	& 46.61 \\
stein27	& 0	& 21	& 17	& 0	& 0	& 0	& 1257	& 1254	& 0.02	& 0.03	& 0.00 \\
stein45	& 0	& 35	& 28	& 0	& 0	& 0	& 3584	& 3575	& 0.08	& 0.15	& 0.00 \\
swath	& 8.18	& 10	& 0	& 0.02	& 0.02	& 11.79	& 6917	& 6895	& 5.99	& 16.72	& 44.13 \\
vpm1	& 20.73	& 12	& 0	& 0	& 0	& 23.94	& 1073	& 1047	& 0.05	& 0.06	& 15.48 \\
vpm2	& 11.25	& 27	& 6	& 0	& 0	& 16.96	& 4345	& 4296	& 0.07	& 0.17	& 50.76 \\
\hline
Average	& 19.49	& 19.54	& 6.07	& 0.01	& 0.02	& 29.06	& 3903.2	& 3846.29	& 1.97	& 4.75	& 49.14 \\
\end{tabular}
\end{center}
\end{table}
\end{small}

\begin{small}
\begin{table}
\caption{GMI cuts + Triangle cuts, strengthened, 1 round}
\label{tab9.2}
\begin{center}
\footnotesize
\begin{tabular}{ l | r r r D D | r r r D D | r}	\cline{2-11}
  & \multicolumn{5}{c|}{GMI} & \multicolumn{5}{c|}{GMI + $T_A$, strengthened} &  \\\cline{2-11}
  & \multicolumn{1}{c}{Gap} & \multicolumn{1}{c}{Cuts} & \multicolumn{1}{c}{Cuts} & &  & \multicolumn{1}{c}{Gap} & \multicolumn{1}{c}{Cuts} & \multicolumn{1}{c}{Cuts} & & & \\
  & \multicolumn{1}{c}{closed} & \multicolumn{1}{c}{added} & \multicolumn{1}{c}{deleted} & &  & \multicolumn{1}{c}{closed} & \multicolumn{1}{c}{added} & \multicolumn{1}{c}{deleted} & & & Improvement\\
Instance  & \multicolumn{1}{c}{\%} & \multicolumn{1}{c}{\#} & \multicolumn{1}{c}{\#} & &  & \multicolumn{1}{c}{\%} & \multicolumn{1}{c}{\#} & \multicolumn{1}{c}{\#} & & & \multicolumn{1}{c}{\%}\\\hline
air03	& 100	& 36	& 10	& 0.05	& 0.12	& 100	& 5646	& 5569	& 17.59	& 42.04	& 0.00 \\
cap6000	& 41.65	& 2	& 1	& 0.02	& 0.02	& 41.65	& 364	& 363	& 0.66	& 2.4	& 0.00 \\
danoint	& 0.26	& 24	& 13	& 0.02	& 0.02	& 0.26	& 24	& 13	& 0.02	& 0.35	& 0.00 \\
dcmulti	& 45.75	& 49	& 12	& 0.01	& 0.01	& 45.75	& 119	& 86	& 0.01	& 0.17	& 0.00 \\
egout	& 21.84	& 16	& 0	& 0	& 0	& 60.9	& 2026	& 1990	& 0.02	& 0.04	& 178.85 \\
enigma	& 100	& 6	& 5	& 0	& 0	& 100	& 143	& 142	& 0	& 0.02	& 0.00 \\
fiber	& 53.32	& 39	& 24	& 0.01	& 0.01	& 59.77	& 9927	& 9877	& 0.36	& 1.16	& 12.10 \\
fixnet3	& 6.62	& 6	& 0	& 0	& 0	& 47.01	& 1571	& 1479	& 0.12	& 0.22	& 610.12 \\
fixnet4	& 4.79	& 6	& 0	& 0	& 0	& 12.49	& 1523	& 1426	& 0.13	& 0.23	& 160.75 \\
fixnet6	& 3.98	& 6	& 0	& 0	& 0.01	& 12.03	& 1352	& 1240	& 0.14	& 0.23	& 202.26 \\
khb05250	& 74.91	& 19	& 0	& 0	& 0	& 84.21	& 722	& 698	& 0.06	& 0.11	& 12.41 \\
l152lav	& 0	& 0	& 0	& 0	& 0	& 13.42	& 7568	& 7483	& 4.51	& 13.16	& 0.00 \\
lseu	& 55.94	& 12	& 7	& 0	& 0	& 55.94	& 291	& 286	& 0	& 0.02	& 0.00 \\
markshare1	& 0	& 6	& 3	& 0	& 0	& 0	& 66	& 56	& 0	& 0.01	& 0.00 \\
markshare2	& 0	& 7	& 3	& 0	& 0	& 0	& 91	& 69	& 0	& 0.02	& 0.00 \\
mas74	& 6.52	& 9	& 0	& 0	& 0	& 7.44	& 261	& 236	& 0.02	& 0.05	& 14.11 \\
mas76	& 6.36	& 9	& 1	& 0	& 0	& 7.12	& 225	& 203	& 0.01	& 0.04	& 11.95 \\
misc03	& 8.62	& 20	& 17	& 0	& 0	& 8.62	& 1031	& 1028	& 0.04	& 0.12	& 0.00 \\
misc06	& 26.17	& 8	& 0	& 0.01	& 0.01	& 26.17	& 8	& 0	& 0	& 0.04	& 0.00 \\
misc07	& 0	& 28	& 25	& 0	& 0.01	& 0	& 1933	& 1930	& 0.15	& 0.38	& 0.00 \\
mod008	& 20.1	& 4	& 1	& 0	& 0	& 20.11	& 48	& 43	& 0.01	& 0.02	& 0.05 \\
mod010	& 0	& 0	& 0	& 0	& 0.01	& 93.23	& 5873	& 5735	& 4.87	& 12.16	& 0.00 \\
mod011	& 11.44	& 8	& 1	& 0.13	& 0.14	& 32.53	& 464	& 103	& 3.09	& 3.78	& 184.35 \\
modglob	& 13.32	& 16	& 2	& 0	& 0	& 15.75	& 107	& 12	& 0.01	& 0.04	& 18.24 \\
p0033	& 12.6	& 5	& 1	& 0	& 0	& 57.04	& 85	& 80	& 0	& 0	& 352.70 \\
p0201	& 16.89	& 20	& 13	& 0	& 0	& 19.31	& 1235	& 1230	& 0.06	& 0.13	& 14.33 \\
p0282	& 3.47	& 24	& 17	& 0	& 0	& 5.38	& 1416	& 1409	& 0.03	& 0.08	& 55.04 \\
p0548	& 3.06	& 19	& 2	& 0	& 0.01	& 17.37	& 2958	& 2917	& 0.11	& 0.24	& 467.65 \\
p2756	& 0.21	& 7	& 1	& 0	& 0.01	& 0.56	& 546	& 523	& 0.08	& 0.16	& 166.67 \\
pk1	& 0	& 15	& 5	& 0	& 0	& 0	& 18	& 8	& 0	& 0.02	& 0.00 \\
pp08a	& 54.3	& 50	& 0	& 0	& 0	& 64.89	& 3862	& 3774	& 0.03	& 0.1	& 19.50 \\
pp08aCUTS	& 32.83	& 40	& 0	& 0.01	& 0.01	& 40.9	& 1132	& 1058	& 0.05	& 0.16	& 24.58 \\
qiu	& 0.33	& 36	& 24	& 0.15	& 0.16	& 0.33	& 36	& 24	& 0.16	& 0.4	& 0.00 \\
rentacar	& 0	& 2	& 0	& 0.02	& 0.03	& 0	& 4	& 2	& 0.06	& 0.27	& 0.00 \\
rgn	& 3.15	& 17	& 9	& 0	& 0	& 3.15	& 697	& 689	& 0.03	& 0.04	& 0.00 \\
set1ch	& 30.36	& 125	& 1	& 0	& 0	& 44.3	& 12412	& 12154	& 0.22	& 0.89	& 45.92 \\
stein27	& 0	& 21	& 17	& 0	& 0	& 0	& 846	& 843	& 0	& 0.01	& 0.00 \\
stein45	& 0	& 35	& 28	& 0	& 0	& 0	& 2395	& 2386	& 0.04	& 0.09	& 0.00 \\
swath	& 8.18	& 10	& 0	& 0.02	& 0.03	& 11.79	& 3358	& 3338	& 3.03	& 8.64	& 44.13 \\
vpm1	& 20.73	& 12	& 0	& 0	& 0	& 21.94	& 480	& 457	& 0.02	& 0.03	& 5.84 \\
vpm2	& 11.25	& 27	& 6	& 0	& 0	& 16.02	& 2133	& 2094	& 0.03	& 0.1	& 42.40 \\
\hline
Average	& 19.49	& 19.54	& 6.07	& 0.01	& 0.02	& 27.98	& 1829.17	& 1781.78	& 0.87	& 2.15	& 43.61 \\
\end{tabular}
\end{center}
\end{table}
\end{small}

\begin{small}
\begin{table}
\caption{GMI cuts + Conic cuts, strengthened, 1 round}
\label{tab9.3}
\begin{center}
\footnotesize
\begin{tabular}{ l | r r r D D | r r r D D | r}	\cline{2-11}
  & \multicolumn{5}{c|}{GMI} & \multicolumn{5}{c|}{GMI + $C_A$, strengthened} &  \\\cline{2-11}
  & \multicolumn{1}{c}{Gap} & \multicolumn{1}{c}{Cuts} & \multicolumn{1}{c}{Cuts} & &  & \multicolumn{1}{c}{Gap} & \multicolumn{1}{c}{Cuts} & \multicolumn{1}{c}{Cuts} & & & \\
  & \multicolumn{1}{c}{closed} & \multicolumn{1}{c}{added} & \multicolumn{1}{c}{deleted} & &  & \multicolumn{1}{c}{closed} & \multicolumn{1}{c}{added} & \multicolumn{1}{c}{deleted} & & & Improvement\\
Instance  & \multicolumn{1}{c}{\%} & \multicolumn{1}{c}{\#} & \multicolumn{1}{c}{\#} & &  & \multicolumn{1}{c}{\%} & \multicolumn{1}{c}{\#} & \multicolumn{1}{c}{\#} & & & \multicolumn{1}{c}{\%}\\\hline
air03	& 100	& 36	& 10	& 0.06	& 0.12	& 100	& 7134	& 7057	& 22.46	& 52.47	& 0.00 \\
cap6000	& 41.65	& 2	& 1	& 0.02	& 0.02	& 41.65	& 998	& 997	& 1.71	& 5.89	& 0.00 \\
danoint	& 0.26	& 24	& 13	& 0.02	& 0.02	& 0.26	& 24	& 13	& 0.01	& 0.2	& 0.00 \\
dcmulti	& 45.75	& 49	& 12	& 0.01	& 0.01	& 45.86	& 322	& 292	& 0.04	& 0.16	& 0.24 \\
egout	& 21.84	& 16	& 0	& 0	& 0	& 35.15	& 1451	& 1422	& 0.01	& 0.03	& 60.94 \\
enigma	& 100	& 6	& 5	& 0	& 0	& 100	& 204	& 203	& 0.02	& 0.02	& 0.00 \\
fiber	& 53.32	& 39	& 24	& 0.01	& 0.01	& 62.94	& 13056	& 12970	& 0.52	& 1.72	& 18.04 \\
fixnet3	& 6.62	& 6	& 0	& 0	& 0	& 49.3	& 1360	& 1257	& 0.08	& 0.16	& 644.71 \\
fixnet4	& 4.79	& 6	& 0	& 0	& 0	& 10.41	& 1312	& 1210	& 0.08	& 0.16	& 117.33 \\
fixnet6	& 3.98	& 6	& 0	& 0	& 0	& 11.5	& 1156	& 1050	& 0.09	& 0.16	& 188.94 \\
khb05250	& 74.91	& 19	& 0	& 0	& 0	& 77.59	& 732	& 704	& 0.05	& 0.1	& 3.58 \\
l152lav	& 0	& 0	& 0	& 0	& 0	& 26.68	& 9206	& 9118	& 5.6	& 13.67	& 0.00 \\
lseu	& 55.94	& 12	& 7	& 0	& 0	& 56.59	& 288	& 283	& 0.01	& 0.02	& 1.16 \\
markshare1	& 0	& 6	& 3	& 0	& 0	& 0	& 64	& 55	& 0.01	& 0.01	& 0.00 \\
markshare2	& 0	& 7	& 3	& 0	& 0	& 0	& 89	& 73	& 0.01	& 0.01	& 0.00 \\
mas74	& 6.52	& 9	& 0	& 0	& 0	& 7.15	& 259	& 237	& 0.03	& 0.05	& 9.66 \\
mas76	& 6.36	& 9	& 1	& 0	& 0	& 7.62	& 217	& 190	& 0.03	& 0.04	& 19.81 \\
misc03	& 8.62	& 20	& 17	& 0	& 0	& 8.62	& 1228	& 1220	& 0.06	& 0.13	& 0.00 \\
misc06	& 26.17	& 8	& 0	& 0	& 0	& 26.17	& 8	& 0	& 0	& 0.04	& 0.00 \\
misc07	& 0	& 28	& 25	& 0	& 0.01	& 0.72	& 2272	& 2266	& 0.16	& 0.41	& 0.00 \\
mod008	& 20.1	& 4	& 1	& 0	& 0	& 20.27	& 52	& 46	& 0.01	& 0.02	& 0.85 \\
mod010	& 0	& 0	& 0	& 0	& 0	& 97.91	& 8338	& 8198	& 6.75	& 15.37	& 0.00 \\
mod011	& 11.44	& 8	& 1	& 0.13	& 0.14	& 27.3	& 399	& 99	& 2.13	& 2.8	& 138.64 \\
modglob	& 13.32	& 16	& 2	& 0	& 0	& 13.94	& 46	& 2	& 0.01	& 0.03	& 4.65 \\
p0033	& 12.6	& 5	& 1	& 0	& 0	& 24.59	& 94	& 89	& 0.01	& 0.01	& 95.16 \\
p0201	& 16.89	& 20	& 13	& 0	& 0	& 16.89	& 718	& 711	& 0.04	& 0.09	& 0.00 \\
p0282	& 3.47	& 24	& 17	& 0	& 0	& 5.4	& 1445	& 1436	& 0.02	& 0.07	& 55.62 \\
p0548	& 3.06	& 19	& 2	& 0	& 0	& 6.53	& 2645	& 2599	& 0.08	& 0.18	& 113.40 \\
p2756	& 0.21	& 7	& 1	& 0	& 0	& 0.21	& 369	& 358	& 0.07	& 0.13	& 0.00 \\
pk1	& 0	& 15	& 5	& 0	& 0	& 0	& 19	& 9	& 0	& 0.02	& 0.00 \\
pp08a	& 54.3	& 50	& 0	& 0	& 0	& 57.01	& 4013	& 3927	& 0.04	& 0.1	& 4.99 \\
pp08aCUTS	& 32.83	& 40	& 0	& 0	& 0	& 34.23	& 1487	& 1409	& 0.06	& 0.16	& 4.26 \\
qiu	& 0.33	& 36	& 24	& 0.16	& 0.16	& 0.33	& 36	& 24	& 0.16	& 0.32	& 0.00 \\
rentacar	& 0	& 2	& 0	& 0.03	& 0.03	& 0	& 5	& 3	& 0.09	& 0.28	& 0.00 \\
rgn	& 3.15	& 17	& 9	& 0	& 0	& 3.15	& 661	& 650	& 0.01	& 0.04	& 0.00 \\
set1ch	& 30.36	& 125	& 1	& 0.01	& 0.01	& 37.43	& 16060	& 15780	& 0.24	& 1	& 23.29 \\
stein27	& 0	& 21	& 17	& 0	& 0	& 0	& 432	& 428	& 0.01	& 0.02	& 0.00 \\
stein45	& 0	& 35	& 28	& 0	& 0.01	& 0	& 1224	& 1214	& 0.03	& 0.05	& 0.00 \\
swath	& 8.18	& 10	& 0	& 0.02	& 0.02	& 9.89	& 3569	& 3537	& 3.08	& 7.84	& 20.90 \\
vpm1	& 20.73	& 12	& 0	& 0	& 0	& 22.73	& 605	& 575	& 0.02	& 0.03	& 9.65 \\
vpm2	& 11.25	& 27	& 6	& 0	& 0	& 15.25	& 2239	& 2199	& 0.04	& 0.09	& 35.56 \\
\hline
Average	& 19.49	& 19.54	& 6.07	& 0.01	& 0.02	& 25.88	& 2093.56	& 2046.59	& 1.07	& 2.54	& 32.83 \\
\end{tabular}
\end{center}
\end{table}
\end{small}

\begin{small}
\begin{table}
\caption{GMI cuts + Triangle cuts + Conic cuts, all strengthened, 5 rounds}
\label{tab9.4}
\begin{center}
\begin{tabular}{ c | c D D D D | c | D D D D c c}\cline{2-10}
      & ~~~~~~GMI~~~~~~    & --    & --    & --    & --    & GMI + $T_A$ + $C_A$, strengthened	& --	& --	& --	& --	& \\\cline{2-10}
      & \multicolumn{5}{c|}{~~~~~Gap~~~~~}       & \multicolumn{1}{c|}{Gap}                          & --    & --    & --    & --    &  \\
      & \multicolumn{5}{c|}{~~~~~Closed~~~~}    & \multicolumn{1}{c|}{Closed}                            & --    & --    & --    & --    & Improvement \\
round & \multicolumn{5}{c|}{~~~~~\%~~~~}        & \multicolumn{1}{c|}{\%}                                & --    & --    & --    & --    & \% \\      \hline
1	& 19.49	& 19.54	& 6.07	& 0.01	& 0.02	& 29.06	& 3903.20	& 3846.29	& 1.97	& 4.75	& 49.14	 \\
2	& 24.94	& 18.93	& 10.12	& 0.01	& 0.01	& 34.01	& 5104.07	& 5090.66	& 2.15	& 5.44	& 36.38	 \\
3	& 27.87	& 19.49	& 14.07	& 0.01	& 0.01	& 36.70	& 4746.00	& 4745.71	& 1.80	& 5.46	& 31.67	 \\
4	& 29.57	& 19.17	& 15.71	& 0.01	& 0.01	& 37.95	& 6355.02	& 6353.10	& 2.33	& 8.49	& 28.31	 \\
5	& 30.48	& 19.20	& 16.80	& 0.01	& 0.01	& 38.78	& 5753.29	& 5751.15	& 3.09	& 14.09	& 27.24	 \\
\end{tabular}
\end{center}
\end{table}
\end{small}

\clearpage


\begin{thebibliography}{99}
\bibitem{ALWW}
Andersen, K., Louveaux, Q., Weismantel, R., Wolsey, L.A., Cutting planes from two rows of a simplex
tableau. IPCO 12, {\em Lecture Notes in Computer Science}, 4513, Springer, 2007, 1-15.

\bibitem{BalasIntersection}
Balas, E., Intersection cuts -- a new type of cutting planes for integer programming. Operations Research 19,
1971, 19-39.

\bibitem{Balas}
Balas, E.,  \newblock Disjunctive programming: properties of the convex hull of feasible points. Discrete Applied Mathematics 89, 1998, 3-44.

\bibitem{BalasDisjunctiveProgramming}
Balas, E., Disjunctive Programming. Annals of Discrete Mathematics 5, 1979, 3-51.

\bibitem{BalasDH1}
Balas, E., Intersection cuts from maximal lattice-free convex sets and lift-and-project cuts from multiple-term disjunctions. American Mathematical Society Western Section Spring Meeting, San Francisco, April 25-27, 2009.

\bibitem{BalasDH2}
Balas, E.,  Multiple-term disjunctive cuts and intersection cuts from multiple rows of the simplex tableau. 20th International Symposium on Mathematical Programming, Chicago, August 23-28, 2009.

\bibitem{BalasCeriaCornuejols}
Balas, E., Ceria, S., Cornu\'{e}jols, G., A lift-and-project cutting plane algorithm for mixed 0-1 programs. Mathematical Programming 58, 1993, 295-324.

\bibitem{BalasCeriaCornuejols2}
Balas, E., Ceria, S., Cornu\'{e}jols, G., Mixed 0-1 Programming by Lift-and-Project in a Branch-and-Cut Framework. Management Science 42, 1996, 1229-1246.

\bibitem{BalasJeroslow} Balas, E., Jeroslow, R., Strengthening cuts for mixed integer programs. European Journal of Operations Research, 4, 1980, 224-234.

\bibitem{BalasPerregaard}
Balas, E., Perregaard, M., A precise correspondence between lift-and-project cuts, simple disjunctive cuts, and mixed integer Gomory cuts for 0-1 programming. Mathematical Programming 94, 2003, 221-245.

\bibitem{BaBoCoMa1}
Basu, A., Bonami, P., Cornu\'ejols, G., Margot, F., On the Relative Strength of Split, Triangle and Quadrilateral Cuts. Mathematical Programming 126, 2009, 1220-1229.

\bibitem{Margot_code}
Basu, A., Bonami, P., Cornu\'ejols, G., Margot, F., Experiments with Two-Row Cuts from Degenerate Tableaux. INFORMS Journal on Computing, 2010.

\bibitem{BorozanCornuejols}
Borozan, V., Cornu\'ejols, G., Minimal valid inequalities for integer constraints. Mathematics of Operations Research, 34, 2009, 538-546.

\bibitem{COIN-OR} COmputational INfrastructure for Operations Research (COIN-OR).\newline \verb=http://www.coin-or.org=

\bibitem{ConfortiCornuejolsZambelli} Conforti, M., Cornu\'ejols, G. and Zambelli, G., Corner polyhedron and intersection cuts. {\em Surveys in Operations Research and Management Science, 16,} 2011, 105-120.

\bibitem{CKS}
Cook, W., Kannan, R., Schrijver,A., \newblock Chvatal Closures for Mixed Integer Programming Problems. Mathematical Programming, 47, 1990, 155-174.

\bibitem{CM}
Cornu\'{e}jols, G., Margot, F., On the Facets of Mixed Integer Programs with Two Integer Variables and Two Constraints. Mathematical Programming A, 120, 2009, 429-456.

\bibitem{DashDeyGunluk}
Dash, S., Dey, S., Gunluk, O., Two dimensional lattice-free cuts and asymmetric disjunctions for mixed-integer polyhedra. Mathematical Programming A, 2010, DOI: 10.1007/s/0/07-011-0455-1.

\bibitem{DeyWolsey}
Dey, S., Wolsey, L.A., Two Row Mixed-Integer Cuts Via Lifting. Mathematical Programming B, 124, 2010, 143-174.

\bibitem{DeyLodi} Dey, S., Lodi, A., Tramontani, A. and Wolsey, L., Experiments with two-row tableau cuts. IPCO 14,
{\em Lecture Notes in Computer Science}, 6080, Springer, 2010, 424-437, \verb=DOI:10.1007/978-3-642-13036-6_32=.

\bibitem{Gomory1960} Gomory, R.E., An algorithm for the mixed integer problem. RM-2597. The Rand Corporation, 1960.

\bibitem{Gomory}
Gomory, R.E., Some polyhedra related to combinatorial problems. {\em Journal of Linear Algebra and Its Applications, 2,} 1969, 451-458.

\bibitem{MIPLIB3CV2}
Margot, F, \newblock MIPLIB3\_C\_V2. Available at \\
{\tt{http://wpweb2.tepper.cmu.edu/fmargot/MPS/miplib3\_c\_v2.tar.gz}}

\bibitem{MIPLIB3}
MIPLIB 3. Available at\\ {\tt\verb=http://www.caam.rice.edu/~bixby/miplib/miplib3.html=}

\bibitem{Qualizza_dissertation} Qualizza, A., Cutting Planes for Mixed Integer Programming. Dissertation, Tepper School of Business, Carnegie Mellon University, 2011.

\end{thebibliography}
\end{document}